\newcommand{\blind}{1}
\newcommand{\R}{\mathbb{R}}
\renewcommand{\H}{\mathcal{H}}
\renewcommand{\L}{\mathcal{L}}
\newcommand{\bSigma}{\mathbf{\Sigma}}
\newcommand{\I}{\mathbf{I}}
\newcommand{\e}{\mathbf{e}}
\newcommand{\X}{\mathbf{X}}
\newcommand{\Y}{\mathbf{Y}}
\newcommand{\bbeta}{\bm{\beta}}
\newcommand{\bepsilon}{\bm{\epsilon}}
\renewcommand{\P}{{\rm P}}
\newcommand{\E}{{\rm E}}
\newcommand{\Var}{{\rm Var}}
\newcommand{\Cov}{{\rm Cov}}
\newcommand{\EEC}{{\rm EEC}}
\newcommand{\FWER}{{\rm FWER}}
\newcommand{\CER}{{\rm CER}}
\newtheorem{thm}{Theorem}
\newtheorem{lemma}{Lemma}
\newtheorem{corr}{Corollary}
\theoremstyle{definition}
\newtheorem{defn}{Definition}
\theoremstyle{remark}
\newtheorem*{remark}{Remark}
\begin{document}

\def\spacingset#1{\renewcommand{\baselinestretch}%
{#1}\small\normalsize} \spacingset{1}


\if1\blind
{
  \title{\bf Estimation of Expected Euler Characteristic Curves of Nonstationary Smooth Gaussian Fields}
  \author{Fabian J.E. Telschow\thanks{
    The authors F.T., D.C. and A.S. were partially supported by NIH grant \textit{R01EB026859}}\hspace{.2cm}\\
    Division of Biostatistics, University of California, San Diego\\
    Armin Schwartzman \\
    Division of Biostatistics and Hal{\i}cio\u{g}lu Data Science Institute,\\ University of California, San Diego\\
    Dan Chang \\
    School of Mathematical and Statistical Sciences, Arizona State University\\ University of California, San Diego \\
    and \\
    Pratyush Pranav \\
    Univ. Lyon, ENS de Lyon, Univ. Lyon 1, CNRS and Centre de Recherche\\ Astrophysique de Lyon (UMR5574)
    }
  \maketitle
} \fi

\if0\blind
{
  \bigskip
  \bigskip
  \bigskip
  \begin{center}
    {\LARGE\bf Estimation of Expected Euler Characteristic Curves of Nonstationary Smooth Gaussian Fields}
\end{center}
  \medskip
} \fi

\bigskip
\begin{abstract}
The expected Euler characteristic (EEC) of excursion sets of a Gaussian random field over a compact manifold approximates the distribution of its supremum for high thresholds. Viewed as a function of the excursion threshold, the EEC of a Gaussian mean zero, constant variance field is expressed by the Gaussian kinematic formula (GKF) as a linear function of the Lipschitz-Killing curvatures (LKCs) of the field, which solely depend on the covariance function and the manifold.
In this paper, we propose consistent estimators of the LKCs as linear projections of ``pinned" Euler characteristic (EC) curves obtained from realizations of a functional Gaussian multiplier bootstrap. We show that these estimators have low variance, can estimate the LKCs of the limiting field of non-stationary, non-Gaussian fields satisfying a functional CLT, handle unknown means and variances of the observed fields and can be computationally efficiently implemented even for complex underlying manifolds.
Furthermore, a parametric plugin-estimator for the EEC curve of the limiting field is presented, which is more efficient than the nonparametric average of EC curves. The proposed methods are evaluated using simulations of 2D fields, and illustrated on cosmological observations and simulations on the 2-sphere.
\end{abstract}

\noindent%
{\it Keywords:}  Random field theory, Lipschitz-Killing curvatures, Gaussian related fields
\vfill

\newpage
\spacingset{1.5} 
\section{Introduction}
\label{sec:Introduction}

\paragraph{The expected Euler characteristic curve}
The expected Euler characteristic curve (EEC) of a random field is a function that describes the EEC of the excursion sets of the field as a function of the excursion threshold. For large thresholds, the EEC curve is an excellent approximation of the tail distribution of the supremum of a mean zero, variance one, smooth Gaussian fields defined over a compact domain \citep{Taylor:2005}. For this reason, the EEC curve has been extensively used to set the significance threshold for control of the family-wise error rate (FWER), particularly in neuroimaging studies \citep{Worsley:1996,Worsley:2004,Nichols:2012}. For the same reason, it has recently been used to construct simultaneous confidence bands for functional data in  \citep{Telschow:2019,Telschow:2019b, Liebl:2019}.

The strength of the EEC curve lays in the fact that it can be written explicitly for Gaussian (and certain Gaussian-related fields) by means of the Gaussian kinematic formula (GKF) \citep{RFG:2007, Taylor:2006}. It ingeniously connects geometric and probabilistic properties of a smooth mean zero, variance one, Gaussian field $f$, which is defined over a compact $d$-dimensional manifold $S$, possibly with piecewise $\mathcal{C}^2$-boundary. It states that the EEC of the excursion set $A_f(u) = \{s \in S: f(s) \ge u\}$ can be written as
\begin{equation}
	\label{eq:EEC}
	\EEC(u) = \E\big[\chi\big(A_f(u)\big)\big] = \L_0 \Phi^+(u) + \sum_{d=1}^D \L_{d} \rho_d(u)\,,
\end{equation}
which remarkably is a finite linear combination of the so-called EC-densities
\begin{equation}
\label{eq:EC-densities}
\rho_d(u) = (2\pi)^{-(d+1)/2} H_{d-1}(u) e^{-u^2/2}, \qquad d=1,\ldots,D,
\end{equation}
where $H_d$ is the $d$-th probabilistic Hermite polynomial and $\Phi^+(u) = \P(N(0,1) > u)$. The linear coefficients $\L_0,\ldots,\L_D$ are called the \textit{Lipschitz-Killing curvatures} (LKCs) of $S$ and are intrinsic volumes of $S$ considered as a Riemannian manifold endowed with a Riemannian metric induced by $f$, c.f. \citep[Chapter 12]{RFG:2007}. In applications the EC densities are usually known and only the LKCs, except for $\L_0$, which is simply the EC of $S$, need to be estimated, since they depend on the unknown correlation function of $f$.

\paragraph{Previous work on estimation of LKCs}
Estimation of LKCs was first studied in the neuroimaging community assuming that the random field is stationary isotropic \citep{Worsley:1992, Worsley:1996, Kiebel:1999}. These estimators use the fact that for isotropic fields the LKCs are deterministic functions of the covariance matrix of first order partial derivatives of $f$ \citep[Corollary 11.7.3]{RFG:2007}. Thus, sophisticated discrete derivatives based on iid observations $f_1,...,f_N\sim f$ and averaging across $S$ gives accurate estimates of the LKCs. The simplifying stationary isotropic assumption, however, has been recently called into question in neuroimaging studies, claiming that it has led to too many false positive findings and lack of reproducibility \citep{Eklund:2016}.

A major complication in removing these strong assumption is that even knowing the functional form of a non-stationary covariance function exactly is generally not very helpful. While the LKCs can be written as integrals of covariances of partial derivatives of the field \citep[Thm 12.4.2]{RFG:2007}, these integrals are hard to evaluate numerically and analytically for arbitrary domains of dimension higher than 1.

Currently there are two approaches in the literature that relax this strong assumption on $f$. In \citet{Taylor:2007}, an estimator is introduced based on triangulating $S$ and warping the mesh of vertices using a diffeomorphism $\psi:S\rightarrow S$ such that the random field $f\circ\psi$ on the triangulated domain becomes locally isotropic. This is combined with smart computation of intrinsic volumes for simplicial complexes. This estimator, called {\it warping estimator} hereafter, is based on residuals, does not require stationarity and handles unknown mean and variance of $f$. Moreover, if the field $f$ is Gaussian, the warping estimator is unbiased if the triangulation of $S$ gets infinitely dense. An apparent downside of the warping estimator is its conceptual and computational complexity. The geometric calculations in the transformed space can be time-consuming for large triangulations and we show that this estimator is biased even for Gaussian observations on a finite triangulation.

A more recent approach, which inspired our work, is based on regression \citep{EC-regression2017}. Following the form of \eqref{eq:EEC}, they proposed to perform a linear regression of the average of empirically observed EC curves of a sample $f_1,...,f_N\sim f$ on the EC-densities. Choosing a set $u_1,...,u_P\in \R$ of exceedance levels then tranforms estimation of the LKCs into a general linear model with known covariates $\rho_d(u_p)$. The regression coefficients $\L_{1},...,\L_{D}$ are then estimated by weighted least squares. This approach, which they call {\it LKC regression}, has several problems. First, there are no clear guidelines on how to choose the locations $u_1,...,u_P$ on the real line, and the authors only compare heuristics for their placement. Second, the covariance function of the error vector of the regression needs to be estimated, which implies estimation of $\Cov\big[ \chi(A_{u_p}),\chi(A_{u_{p'}}) \big]$ for $p,p'=1,...,P$. There is theoretically not much known about this quantity for a general Gaussian random field. Thirdly, the authors give no theoretical analysis of their estimator, and fourthly, most importantly, their method relies heavily on observing Gaussian samples. This renders this estimator inapplicable in many practical situations.

\paragraph{Our proposed Hermite projection estimator}
Inspired by the regression approach, our proposed estimator solves all its problems. The key observation is that the EC densities \eqref{eq:EC-densities}, appropriately scaled, form an orthonormal system for a weighted $L^2$ space. Thus, the $d$-th LKC coefficient can be obtained by an appropriate orthogonal projection of the EEC curve onto the $d$-th EC density. We call this the {\it Hermite projection estimator} (HPE) of the LKCs. In Theorem \ref{thm:CritValRepresentation} we show that the HPE for Gaussian fields can be efficiently computed without numerically solving the indefinite integral that defines the projection. Under slightly stronger conditions than those required by the GKF, we also prove in Theorem \ref{thm:UnbiasedAndVariance} that the estimator is unbiased and has finite variance. These results allow us, by using the weak law of large numbers and the standard multivariate CLT, to draw power for estimation of the LKCs from a sample $f_1,...,f_N$ and show it is consistent and satisfies a CLT. A byproduct of the proof of Theorem \ref{thm:UnbiasedAndVariance} is the conjectured property that $\Cov[\chi(u),\chi(u')]$ decays faster than any polynomial in $u,u'$, see Corollary \ref{cor:CovDecay}.

The HPE in its simplest version requires mean zero variance one Gaussian fields to work properly, as illustrated in our simulations. The full solution of this problem comprises an unusual use of the Gaussian multiplier bootstrap, which allows non-Gaussian and even non-iid observations to estimate the LKCs of their Gaussian limiting field $f$. The Gaussian multiplier bootstrap allows to simulate from a Gaussian field, which conditioned on a sample $f_1,...,f_N$, is mean zero variance one with correlation function being the sample correlation. The simulated fields from this constructed Gaussian multiplier field can be fed into the HPE to obtain consistent LKC estimates, see Theorem \ref{thm:bHPE}. We call this estimator the {\it bootrapped HPE} (bHPE). In Theorem \ref{thm:bHPE} we show that it is consistent.

As proposed in \cite{EC-regression2017}, the EEC curve can be estimated by plugging an LKC estimate into the r.h.s of \eqref{eq:EEC}. If based on a single observed field, we refer to the corresponding EC curve as a \textit{smoothed EC curve}. This is illustrated for an isotropic field used in our simulations in Figure \ref{fig:GRF-isotropic}. Using the HPE and averaging the corresponding smoothed EC curves for multiple realizations leads to a linear and smooth parametric estimator of the EEC curve, which we also call HPE. This estimator satisfies a functional central limit theorem (fCLT), see Theorem \ref{thm:ECcurveCLT}, from which confidence bands can be obtained. Alternatively, a nonparametric estimate may be obtained by pointwise averaging of the observed EC curves, compare Figure \ref{fig:GRF-isotropic}(middle). Simulations show that the HPE has a lower variance, which can be explained by the fact that the HPE is an orthogonal projection of the nonparametric estimator onto the EC densities.

Additionally, for applications in FWER inference, we show in Theorem \ref{thm:CLT-threshold} that our EEC estimator leads to a consistent estimator of the detection threshold and we derive confidence intervals.
\vspace*{-0.3cm}

\begin{figure}[H]
\centering
		\includegraphics[trim=0 0 0 0,clip,width=2.1in]{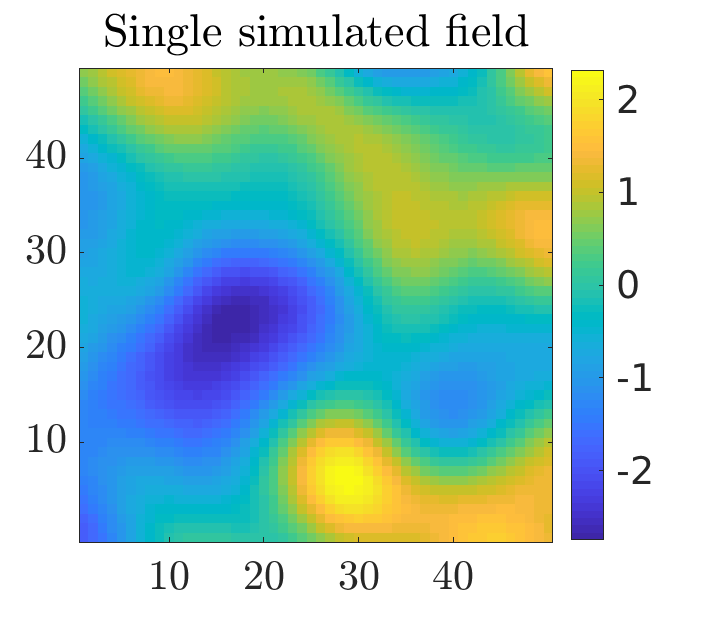}
		\includegraphics[ trim=0 0 0 0, clip, width=2.1in ]{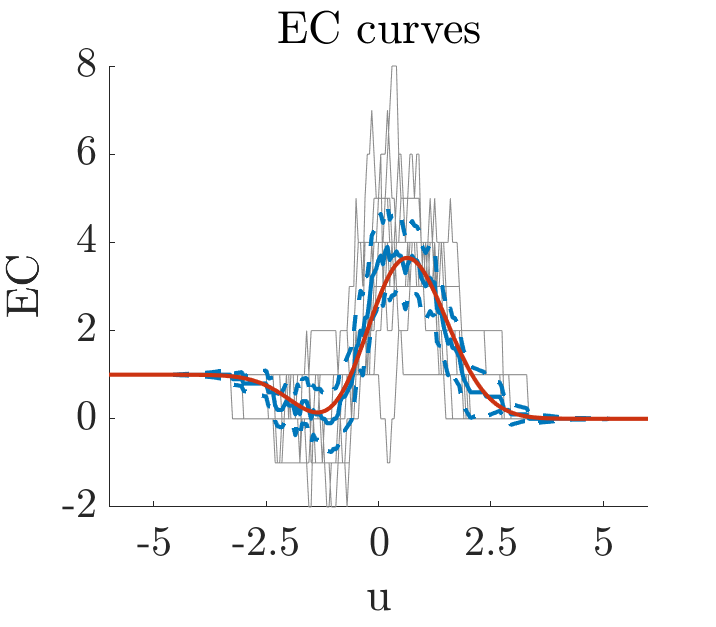}
		\includegraphics[ trim=0 0 0 0, clip, width=2.1in ]{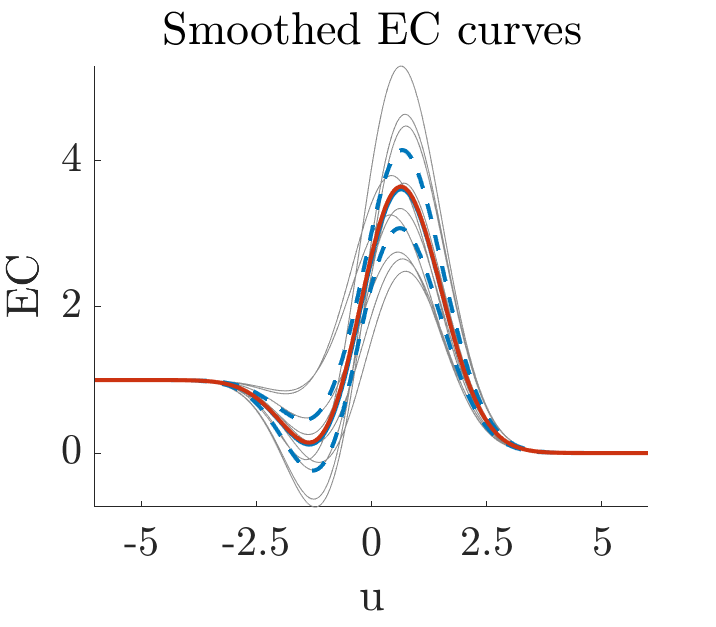}
		
		\vspace*{-0.7cm}
		\caption{(left) Single realization of the isotropic Gaussian random field \eqref{eq:isotropic-2D}. (middle) EC curves (gray) for $N=10$ realizations of the field and their average (blue). (right) Corresponding smoothed EC curves (gray) and their pointwise average (blue). Dashed blue lines are pointwise 95\% confidence bands for the true EEC curve (red).
		\label{fig:GRF-isotropic}}
\end{figure}

		\vspace*{-0.7cm}
\paragraph{Simulations and data applications}
The simulations in Section \ref{sec:simulations} studies the theoretical properties and the finite sample performance of the HPE and bHPE for Gaussian, non-Gaussian and non-stationary fields on a 2D rectangular domain. Performances are compared to a method (IsotE) from \citep{Kiebel:1999} taylored to stationary isotropic fields and the warping-to-isotropy transformation (WarpE) of \cite{Taylor:2007}, which applies to non-isotropic fields. Note that the HPE can be seen as a continuous version of the LKC regression estimator and thus, has similar properties. Therefore we do not show results of the EC regression method. In all cases the bHPE gives comparable or better results than its competitors.

In Section \ref{sec:data} we estimate EECs of the cosmic microwave background (CMB) radiation field on a complex, non-trivial subset of the 2-sphere using cosmological simulations \citep{Planck:2016-ffp8} in order to compare the physical model with actual observed CMB data from the Planck satellite \citep{Planck:2016-NILC}. A second data application for FWER inference on voxelwise activation in a single-subject fMRI study \citep{Moran:2012} is available in the supplementary material.
Matlab code for reproducing  the simulation results and data analysis is available under \url{https://github.com/ftelschow/HPE}.

\section{Estimation of the LKCs for Gaussian fields}
\label{sec:Gaussestimator}
\paragraph{LKCs as projections of the EC curve}
\label{sec:functional}
Consider the Hilbert space with inner product
\begin{equation}
\label{eq:inner-product}
\langle g, h \rangle = \int_{-\infty}^\infty g(u) h(u) e^{u^2/2} \,du,
\end{equation}
consisting of all functions $g$, $h$ such that $|g|^2 = \langle g, g \rangle < \infty$ and $|h|^2 = \langle h, h \rangle < \infty$. The key observation for estimation of LKCs is that the EC densities \eqref{eq:EC-densities} are orthogonal, i.e.,
\begin{equation}
\label{eq:orthogonality-rho}
\langle \rho_d, \rho_{d'} \rangle
= \int_{-\infty}^\infty \rho_d(u) \rho_{d'}(u) e^{u^2/2} \,du
= (2\pi)^{-(d+1/2)} (d-1)! \delta_{d{d'}}\,,
\end{equation}
where $\delta_{d{d'}}$ is the Kronecker delta.
 This follows from the orthogonality of the Hermite polynomials in the $L^2$ space with weights $e^{-u^2/2}$, i.e.,
\begin{equation}
\label{eq:orthogonality-H}
\int_{-\infty}^\infty H_{d-1}(u) H_{{d'}-1}(u) e^{-u^2/2} \,du = \sqrt{2\pi} (d-1)! \delta_{d{d'}}, \qquad d,{d'} = 1,2,\ldots,\,.
\end{equation}

Thus, \eqref{eq:EEC} implies that the LKCs can be obtained from the $\EEC$ as projection coefficients,
\begin{equation}
\label{eq:L-recover}
\L_d = \frac{ \left\langle  \EEC^{\circ}, \rho_d \right\rangle}{|\rho_d|^2} = \tfrac{(2\pi)^{d/2}}{(d-1)!} \int_{-\infty}^\infty H_{d-1}(u) \EEC^{\circ}(u) \,du\,,
\end{equation}
where $\EEC^{\circ} = \EEC - \L_0 \Phi^+$  is the ``pinned" EEC curve. It is pinned in the sense that it tends to 0 for both small and large $u$, since $\lim_{u\to \infty} \EEC(u)=0$ and $\lim_{u\to -\infty} \EEC(u)=\L_0$. The operation \eqref{eq:L-recover} can be seen as a linear functional on the Hilbert space 
\begin{equation}\label{eq:projection}
\H_d\{ g \} = \frac{ \left\langle  g, \rho_d \right\rangle}{|\rho_d|^2} =  \frac{(2\pi)^{d/2}}{(d-1)!} \int_{-\infty}^\infty H_{d-1}(u) g(u) \,du, \qquad d = 1,2,\ldots\,,
\end{equation}
which we call the $d$-th Hermite projector.

\paragraph{Estimation of the LKCs from a single observation}
\label{sec:observed-LKC}
Because $\L_0$ is the EC of the domain $S$ of the Gaussian random field $f$, it is known and need not be estimated. Given a realization of $f$ and its empirical EC curve $u\mapsto \chi_f( u )$, which is formed by the EC of the excursion sets $A_f(u)$ of $f$ above $u$, we define the corresponding ``pinned" EC curve as
\begin{equation}
	\label{eq:EC-pinned}
	\chi^{\circ}_f:~ \R\rightarrow \R \,,~~u\mapsto \chi_f( u ) - \L_0 \Phi^+(u)\,,
\end{equation}
which satisfies $\lim_{u\to \infty} \chi_f(u)=0$ and $\lim_{u\to -\infty} \chi_f(u)=\L_0$ almost surely. Applying the Hermite projector \eqref{eq:projection} yields the estimator
\begin{equation}
	\label{eq:observed-LKC}
	\hat{\L}_{d} = \H_d\{\chi^{\circ}_f\} = \tfrac{(2\pi)^{d/2}}{(d-1)!} \int_{-\infty}^\infty H_{d-1}(u) \chi^{\circ}_f(u) \,du\,,
\end{equation}
which we call the {\it Hermite projection estimator} (HPE) of the LKC $\L_d$.
Note that the integral is well defined, since $\chi^{\circ}_f$ is exponentially decaying outside the interval $[u_0, u_M]$ and is bounded on it. Here $u_0,u_M$ denote the values of the global minima and maxima, respectively, of the realization $f$. Comparing with \eqref{eq:L-recover}, the estimator $\hat{\L}_{d}$ plays the role of the ``observed" LKC of order $d$ of the field $f$.  In some sense the HPE can be seen as a continuous version of Adler's LKC regression, compare Appendix \ref{appendix:lin-reg-view}.

\paragraph{Properties of the Hermite projection estimator}\label{sec:prop-LKC-estimator}

Heuristically, from equations \eqref{eq:L-recover} and \eqref{eq:observed-LKC} by interchanging integration and expectation we obtain that $\hat{\L}_{d}$ is unbiased. Moreover, let $\hat{\bm \L} = (\hat{\L}_{1},\ldots,\hat{\L}_{D})^{\tt T}$ be the vector of observed LKCs and denote its covariance matrix by $\bSigma = \Cov\left[\hat{\bm \L}\right]$. Again changing order of integration and expectation yields that the $(d,{d'})$-entry of the covariance matrix $\bSigma$ can be expressed as
\begin{equation}\label{eq:cov-L}
\sigma_{d{d'}} = \Cov\left[\hat{\L}_{d}, \hat{\L}_{{d'}}\right]
= \tfrac{(2\pi)^{d/2} (2\pi)^{{d'}/2}}{(d-1)! ({d'}-1)!} \iint H_{d-1}(u) \, H_{{d'}-1}(v) \, \Cov\left[\chi_f( u ), \chi_f( v )\right] \,du\,dv <\infty.
\end{equation}
using Eq. \eqref{eq:observed-LKC}. To rigorously prove these statements we require the following assumptions.
\begin{enumerate}[leftmargin=1.4cm]
    \item[\textbf{(G1)}]  $f$ is a mean zero, variance one Gaussian field with almost surely $\mathcal{C}^2$-sample paths.
    \item[\textbf{(G2)}] The  distribution of $\Big(\tfrac{\partial f}{\partial s_d}(s), \tfrac{\partial^2 f}{\partial s_d\partial s_{d'}}(s)\Big)$ is nondegenerate for all $s\in S$ and $d,d'=1,...,D$.
    \item[\textbf{(G3)}] There is $\epsilon >0$ such that for all $d,d'=1,...,D$ and all $\vert s-s'\vert <\epsilon$ it holds that $$\E\!\left[ \Big( \tfrac{\partial^2 f}{\partial s_d\partial s_{d'}}(s) -\tfrac{\partial^2 f}{\partial s_d\partial s_{d'}}(s') \Big)^2 \right] \leq K \big\vert \log\Vert s-s' \Vert \big\vert^{-(1+\gamma)}. $$ Here $K>0$ and $\gamma>0$ are finite constants.
    \item[\textbf{(G4)}] Let $N_f$ be the number of critical points of $f$ and $\varepsilon>0$.\\
        $~~~~~\text{ a) }~\E\big[N_f^{1+\varepsilon}\big] < \infty~ ~ ~ ~ ~ ~ ~ ~\text{ b) }~ \E\big[N_f^{2+\varepsilon}\big] < \infty$
\end{enumerate}

\begin{remark}
\textbf{(G1)}-\textbf{(G3)} are requirements for the validity of the GKF, see \citet{RFG:2007}. In particular, they imply that the paths of $f$ are almost surely Morse functions, \cite[Chapter 11.3]{RFG:2007}. \textbf{(G3)} is satisfied for any Gaussian field $f$ having almost surely $\mathcal{C}^3-$sample paths \citep{Telschow:2019}. \textbf{(G4)} is sufficient to proof unbiasedness (a) and finiteness of the variance (b) of our estimator. Precise conditions to satisfy these conditions are an active research topic. For $D=1$ the almost surely $\mathcal{C}^4/\mathcal{C}^7$-sample paths are sufficient to establish the finiteness of the moments of the number of critical points in \textbf{(G4a/b)}, cf. \cite[Theorem 3.6]{Azais:2009}. For $D>1$ the situation is more complicated. However, at least for stationary fields weak sufficient conditions to satisfy \textbf{(G4a)} have been recently given by \citep{Estrade:2016}, which are satisfied for Gaussian fields with $\mathcal{C}^3$ sample paths.
\end{remark}

Note that, if we order the critical values of $f$ by height, then the empirical EC curve $\chi_f(u)$ is constant between consecutive critical levels. Therefore the estimator \eqref{eq:L-hat} can be equivalently written as a polynomial function of the critical levels.
\begin{thm}\label{thm:CritValRepresentation}
 Assume $f$ is Gaussian and satisfies \textbf{(G1)}-\textbf{(G4)}. Since the paths of $f$ are almost surely Morse functions, we only have finitely many critical values, which we order and denote with $u_0<...<u_M$. Thus, almost surely
\begin{equation}\label{eq:EC-crit-val-repr}
 \chi_f(u) = \L_0\mathds{1}_{(-\infty,u_{0}]}(u) + \sum_{m=1}^{M} a_m \mathds{1}_{(u_{m-1},u_{m}]}(u)
\end{equation}
with random $a_m=\chi_f(u_{m})\in\mathbb{Z}$ and hence
\begin{align*}
 \hat\L_{d} &= \frac{(2\pi)^{d/2}}{d!}\sum_{m=0}^{M} (a_m-a_{m+1}) H_d(u_{m})\,,~~ \text{ with } a_0=\L_0 \text{ and } a_{M+1}=0\,.
\end{align*}
\end{thm}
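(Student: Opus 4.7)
The plan is to split the theorem into two parts and prove each separately. The first claim, the step-function representation \eqref{eq:EC-crit-val-repr}, is a direct consequence of Morse theory combined with assumptions \textbf{(G1)}--\textbf{(G3)}. Since these assumptions guarantee that the sample paths of $f$ are almost surely Morse functions on the compact manifold $S$, they have only finitely many critical points, whose critical values I can order as $u_0 < \dots < u_M$. As $u$ moves through an interval $(u_{m-1},u_m]$ that contains no critical value, the topology of the excursion set $A_f(u)$ is unchanged up to homotopy (handle-attachment occurs only at critical values), so $\chi_f$ is constant on each such interval; the constant is by definition $a_m := \chi_f(u_m)\in\mathbb{Z}$. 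For $u\le u_0$ one has $A_f(u)=S$, so $\chi_f(u)=\L_0$. This yields the indicator-sum representation.

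For the projection formula, my plan is to evaluate the integral in \eqref{eq:observed-LKC} piecewise, using the Hermite identity $H_d'(u) = d\,H_{d-1}(u)$ so that $H_d(u)/d$ is an antiderivative of $H_{d-1}(u)$, and then integrate by parts on each piece. Concretely, I would write
\[
\chi_f^{\circ}(u) = \sum_{m=1}^M a_m \mathds{1}_{(u_{m-1},u_m]}(u) + \L_0\mathds{1}_{(-\infty,u_0]}(u) - \L_0\Phi^+(u),
\]
and split $\int H_{d-1}(u)\chi_f^{\circ}(u)\,du$ into (i) the integrals over the bounded intervals $(u_{m-1},u_m]$, where the integrand $a_m H_{d-1}(u)$ has elementary antiderivative $a_m H_d(u)/d$, and (ii) the integrals over $(-\infty,u_0]$ and $(u_M,\infty)$, where the $-\L_0\Phi^+(u)$ correction must be retained to ensure convergence and which I handle by integration by parts with $dV=H_{d-1}(u)\,du$, $V=H_d(u)/d$, and $U=\L_0\Phi^+(u)$, $dU=-\L_0\phi(u)\,du$.

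The main obstacle is bookkeeping. The bounded pieces produce the telescoping sum
\[
\sum_{m=1}^M \tfrac{a_m}{d}\bigl(H_d(u_m)-H_d(u_{m-1})\bigr) \;=\; \tfrac{1}{d}\Bigl[a_M H_d(u_M) - a_1 H_d(u_0) + \sum_{m=1}^{M-1}(a_m - a_{m+1})H_d(u_m)\Bigr],
\]
while the $-\L_0\Phi^+$ part, once all three regions are collected, contributes boundary terms that vanish at $\pm\infty$ (because $\Phi^+$ kills the polynomial growth of $H_d$ superexponentially, and similarly on the opposite side using $1-\Phi^+$) together with the telescoped expression $\frac{\L_0}{d}H_d(u_0)(\Phi(u_0)+\Phi^+(u_0))$ plus a single line integral $-\frac{\L_0}{d}\int_{-\infty}^\infty H_d(u)\phi(u)\,du$. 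I would then close the computation by invoking $\Phi(u_0)+\Phi^+(u_0)=1$ and the Hermite orthogonality $\int H_d(u)\phi(u)\,du=0$ for $d\ge 1$, so the leftover $\frac{\L_0}{d}H_d(u_0)$ combines with $-\frac{a_1}{d}H_d(u_0)$ into $\frac{(a_0-a_1)}{d}H_d(u_0)$ with the convention $a_0=\L_0$, and $a_M H_d(u_M)$ becomes $(a_M-a_{M+1})H_d(u_M)$ with $a_{M+1}=0$. Multiplying by $(2\pi)^{d/2}/(d-1)!$ and absorbing the factor $1/d$ into $1/d!$ yields the claimed identity.
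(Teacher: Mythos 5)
Your proposal is correct and follows essentially the same route as the paper's proof: split $\chi_f^{\circ}$ into the step-function part plus the $\L_0\mathds{1}_{(-\infty,u_0]}-\L_0\Phi^+$ correction, use $H_d(u)/d$ as an antiderivative of $H_{d-1}$ with integration by parts on the tails, invoke $\Phi(u_0)+\Phi^+(u_0)=1$ and $\int H_d(u)\phi(u)\,du=0$, and telescope with the conventions $a_0=\L_0$, $a_{M+1}=0$. The only addition is your explicit Morse-theoretic justification of the step-function representation, which the paper merely asserts; your bookkeeping and final constants agree with the paper's computation.
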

\begin{remark}
This representation of $\hat\L_{d}$ is useful for numerical implementation, since it does not involve the indefinite integral \eqref{eq:observed-LKC} anymore. Note that the critical values $u_{m}$ can be efficiently estimated together with the coefficients $a_m$, compare Appendix \ref{appendix:ECcomputation}. In fact, note that $a_m-a_{m-1}\in\{\pm 1\}$ for $m=1,...,M$, since crossing a critical value changes the EC of the excursion set only by $\pm 1$, see \citet[Theorem 9.3.2.]{RFG:2007}.
\end{remark}

The next theorem states that the LKC estimators are unbiased and that the covariances are finite under proper assumptions. The strategy of the proofs is identical. We show using Young's and Borel TIS inequality that Fubini's theorem is applicable. Especially, this implies that \eqref{eq:cov-L} holds. Hence $\Cov[\chi_f(u)\chi_f(u')]$ must decay fast enough, which is stated in the corresponding corollary and gives a partial answer to the conjecture that the variance $\Var[\chi_f(u)]$ is decaying exponentially in $u$.
\begin{thm}\label{thm:UnbiasedAndVariance}
	\label{thm:LKC-unbiased} Assume that $f$ is Gaussian and satisfies \textbf{(G1)}-\textbf{(G4)}.
	\begin{enumerate}
	 \item[(i)] If $(G4a)$, then the HPEs are unbiased, i.e., $\E\big[ \hat\L_{d} \big] = \L_{d}$.
	 \item[(ii)] If $(G4b)$, then the covariances $\sigma_{dd'} = \Cov\left[\hat{\L}_{d}, \hat{\L}_{d'}\right]$ are finite for all $d,d'=1,...,N$. 
	\end{enumerate}
\end{thm}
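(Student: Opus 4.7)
The strategy, following the hint in the text, is to verify Fubini's theorem in order to interchange $\E$ with integration in the definition \eqref{eq:observed-LKC} of $\hat{\L}_d$. Once the interchange is licensed, (i) follows at once from
\[
 \E[\hat{\L}_d] \;=\; \H_d\{\E[\chi^\circ_f]\} \;=\; \H_d\{\EEC^\circ\} \;=\; \L_d
\]
by \eqref{eq:L-recover}, and for (ii) a second (double-integral) application of Fubini reduces the problem to the finiteness of \eqref{eq:cov-L}. The whole task thus reduces to producing an a priori $L^1$ bound on $\int |H_{d-1}(u)\chi^\circ_f(u)|\,du$ under (G4a), and an $L^2$ bound under (G4b).

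The first concrete step is to invoke Theorem \ref{thm:CritValRepresentation}, which re-expresses $\hat{\L}_d$ as a finite sum over the critical values of $f$. Since $|a_m - a_{m+1}|=1$ at interior critical values and the two boundary terms contribute at most $O(|\L_0|+N_f)$, while each $|u_m|\leq \sup_{s\in S}|f(s)|$ and $M\leq N_f$, the polynomial growth $|H_d(u)|\leq c_d(1+|u|)^d$ produces the pathwise bound
\[
 |\hat{\L}_d| \;\leq\; C_d\,(N_f+1)\bigl(1+\sup\nolimits_{s\in S}|f(s)|\bigr)^d.
\]
The Borel--TIS inequality, valid under (G1), then shows that $\sup_{s\in S}|f(s)|$ has sub-Gaussian tails, hence finite moments of every order. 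Young's inequality $ab\leq a^p/p+b^q/q$ with conjugate exponents $p=1+\varepsilon$, $q=(1+\varepsilon)/\varepsilon$ combined with (G4a) then gives
\[
 \E\bigl[(N_f+1)(1+\sup|f|)^d\bigr] \;\leq\; \tfrac{1}{p}\E\bigl[(N_f+1)^{1+\varepsilon}\bigr] + \tfrac{1}{q}\E\bigl[(1+\sup|f|)^{dq}\bigr] \;<\;\infty,
\]
and the same argument with $p=1+\varepsilon/2$ yields $\E|\hat{\L}_d|^2<\infty$ under (G4b).

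With these moment bounds in hand, Fubini legitimizes the swap of $\E$ and integration in \eqref{eq:observed-LKC} and delivers (i) directly. For (ii), Cauchy--Schwarz gives $|\hat{\L}_d\hat{\L}_{d'}|\leq \tfrac12(|\hat{\L}_d|^2+|\hat{\L}_{d'}|^2)$, which is integrable under (G4b); a second Fubini applied to the product integrand then both establishes finiteness of $\sigma_{dd'}$ and justifies the double-integral representation \eqref{eq:cov-L}. The main technical hurdle I anticipate is the pathwise bound on $|\hat{\L}_d|$: a naive uniform estimate of the integrand $H_{d-1}(u)\chi^\circ_f(u)$ fails because the support of $\chi^\circ_f$ is a random (and a priori unbounded) set, so it is essential to use the critical-value representation of Theorem \ref{thm:CritValRepresentation} to convert the integral into a finite random sum before applying Borel--TIS and the moment assumptions (G4).
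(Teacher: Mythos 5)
Your proof is correct in substance but takes a genuinely different route from the paper's. The paper never forms a single pathwise bound on the whole integral; instead it splits $\int H_{d-1}(u)\chi^{\circ}_f(u)\,du$ into three pieces (the part supported on the random interval $(u_0,u_M]$ and the two tails carrying $\L_0\Phi^+$ and $\L_0(1-\Phi^+)$), and for each piece applies H\"older's inequality \emph{pointwise in $u$} to decouple $\E\big[M_0\,\mathds{1}_{(u_0,u_M]}(u)\big]$ into $\E[M_0^{1+\epsilon}]^{1/(1+\epsilon)}$ (finite by (G4)) times a fractional power of $\P[\min f< u\le \max f]$, which Borel--TIS turns into a Gaussian tail in $u$; integrability against $|H_{d-1}|$ then follows. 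You instead decouple \emph{after} integrating: you bound the random amplitude by $N_f+1$ and the random support by $[-\sup|f|,\sup|f|]$, obtain a single pathwise estimate $C_d(N_f+1)(1+\sup|f|)^d$, and then apply Young's inequality together with the sub-Gaussian moments of $\sup|f|$ and (G4a)/(G4b). The ingredients (critical-value/step-function representation, Borel--TIS, the moment assumptions, a product-splitting inequality) are the same, but your packaging is more compact and arguably cleaner; the paper's pointwise decoupling has the side benefit of directly exhibiting the super-polynomial decay of $u\mapsto\E[|\chi_f(u)|\mathds{1}_{(u_0,u_M]}(u)]$, which is what feeds Corollary \ref{cor:CovDecay}.

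One imprecision you should repair: Fubini requires $\E\int|H_{d-1}(u)\chi^{\circ}_f(u)|\,du<\infty$, i.e.\ a bound on the integral of the \emph{absolute value} of the integrand, whereas the displayed estimate you derive from the finite-sum form of Theorem~\ref{thm:CritValRepresentation} bounds only $|\hat\L_d|$, the absolute value of the integral (the finite-sum form is obtained by integration by parts, which does not preserve absolute values). This is not a real gap: the identical estimate --- $|\chi_f(u)|\le \L_0+M$ on $(u_0,u_M]$, $|u_0|,|u_M|\le\sup_{s}|f(s)|$, polynomial growth of $H_{d-1}$, and the Gaussian decay of $\Phi^+$ and $1-\Phi^+$ on the two tails --- yields $\int|H_{d-1}(u)\chi^{\circ}_f(u)|\,du\le C_d(N_f+1)(1+\sup|f|)^{d}+C_d'$ pathwise, after which your Young/Borel--TIS step applies verbatim. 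You should state the bound for this quantity (and its square, for part (ii)) rather than for $|\hat\L_d|$.
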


\begin{corr}\label{cor:CovDecay}
    Under the assumptions of Theorem \ref{thm:LKC-unbiased}\,b) we have that
    $\Cov\big[\chi_f(u)\chi_f(u') \big]$ decays faster than any polynomial in $u,u'$ for $u,u'\rightarrow \pm\infty$. 
\end{corr}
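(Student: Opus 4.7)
The plan is to prove the corollary by a direct tail bound on $\Var[\chi_f(u)]$ combined with Cauchy--Schwarz, which is more informative than trying to extract pointwise decay from the integrability statement of Theorem~\ref{thm:UnbiasedAndVariance}(ii) alone. The reduction
\[
\big|\Cov[\chi_f(u),\chi_f(v)]\big| \leq \sqrt{\Var[\chi_f(u)]\,\Var[\chi_f(v)]}
\]
means it suffices to show $\Var[\chi_f(u)]$ decays faster than any polynomial as $u \to \pm\infty$. Since adding a deterministic constant leaves the variance unchanged, I would work with $\chi_f(u)$ itself for $u \to +\infty$ and with the pinned version $\chi_f(u)-\L_0$ for $u \to -\infty$.

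The central step is an almost-sure pathwise domination, available because (G1)--(G3) force the paths of $f$ to be a.s.\ Morse with finitely many critical points. For $u \to +\infty$, the excursion set $A_f(u)$ is empty as soon as $u > \sup_S f$, and otherwise the Morse representation of the EC gives $|\chi_f(u)| \leq N_f$, so
\[
\chi_f(u)^2 \;\leq\; N_f^2\, \mathds{1}\{\sup_S f \geq u\}.
\]
For $u \to -\infty$, $A_f(u) = S$ whenever $u < \inf_S f$ and therefore $\chi_f(u) = \L_0$ there, while otherwise $|\chi_f(u)-\L_0|\leq N_f$ by a signed count of critical points with value below $u$, yielding
\[
(\chi_f(u)-\L_0)^2 \;\leq\; N_f^2\, \mathds{1}\{\inf_S f \leq u\}.
\]
Taking expectations and applying H\"older's inequality with conjugate exponents $(2+\varepsilon)/2$ and $(2+\varepsilon)/\varepsilon$ gives
\[
\E\!\left[N_f^2\, \mathds{1}\{\sup_S f \geq u\}\right] \leq \big(\E[N_f^{2+\varepsilon}]\big)^{2/(2+\varepsilon)}\, \P(\sup_S f \geq u)^{\varepsilon/(2+\varepsilon)},
\]
whose first factor is finite by (G4b). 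The Borel--TIS inequality, applicable because $f$ is mean zero, variance one, and a.s.\ continuous on the compact set $S$, yields $\P(\sup_S f \geq u) \leq \exp\!\big(-(u-\E[\sup_S f])^2/2\big)$ for $u$ large, and the same inequality applied to $-f$ controls $\P(\inf_S f \leq u)$ as $u \to -\infty$. These Gaussian tails decay faster than any polynomial, so $\Var[\chi_f(u)] = o(|u|^{-k})$ for every $k \in \mathbb{N}$, and Cauchy--Schwarz gives $|u|^p |v|^q |\Cov[\chi_f(u),\chi_f(v)]| \to 0$ as $u,v \to \pm\infty$ for every $p,q \geq 0$.

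The main obstacle I anticipate is a clean formulation of the Morse bound $|\chi_f(u)| \leq N_f$ when $S$ has nonempty piecewise $\mathcal{C}^2$ boundary: $N_f$ then has to be read in the stratified sense (critical points on each boundary stratum included) and the EC of the excursion set expressed by the corresponding signed stratified count, along the lines of Chapter~9 of \citet{RFG:2007}. Once this bookkeeping is in place the estimate is insensitive to non-stationarity and to the geometry of $S$, and in particular the argument strengthens Theorem~\ref{thm:UnbiasedAndVariance}(ii) from finiteness of a finite family of weighted double integrals to genuine pointwise super-polynomial decay of the covariance.
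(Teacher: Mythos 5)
Your proof is correct, but it takes a genuinely different route from the paper. The paper's own proof is indirect: it observes that the Fubini justification in Theorem \ref{thm:UnbiasedAndVariance}(ii) works for every pair $d,d'\in\mathbb{N}$, so the double integrals $\iint H_{d-1}(u)H_{d'-1}(v)\Cov[\chi_f(u),\chi_f(v)]\,du\,dv$ are all absolutely convergent, and then concludes that ``this can only be true if'' the covariance decays super-polynomially. You instead prove a direct pointwise bound: Cauchy--Schwarz reduces the claim to tail bounds on $\Var[\chi_f(u)]$, the Morse-count domination $\chi_f(u)^2\leq N_f^2\,\mathds{1}\{\sup_S f\geq u\}$ (and its pinned analogue at $-\infty$) combined with H\"older under \textbf{(G4b)} and Borel--TIS then gives decay of order $\exp\bigl(-c(u-\E[\sup_S f])^2\bigr)$ for some $c>0$. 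This buys two things. First, rigor: absolute integrability against all polynomial weights does not by itself force pointwise decay (a function with increasingly tall, thin spikes is a counterexample), so the paper's final inference needs extra regularity of $(u,v)\mapsto\Cov[\chi_f(u),\chi_f(v)]$ that is not supplied; your argument sidesteps this entirely. Second, strength: you obtain sub-Gaussian rather than merely super-polynomial decay, which speaks more directly to the conjecture mentioned before Theorem \ref{thm:UnbiasedAndVariance} that $\Var[\chi_f(u)]$ decays exponentially. Note that your ingredients are the same ones the paper already deploys \emph{inside} the proof of Theorem \ref{thm:UnbiasedAndVariance}(ii) (the bound $|a_m|\leq M_0$, H\"older, Borel--TIS), so you are essentially re-packaging that argument into a pointwise statement; and your caveat about reading $N_f$ in the stratified sense when $\partial S\neq\emptyset$ is handled in the paper at exactly the same level of detail (via \citet[Theorem 9.3.2]{RFG:2007}), so it is not a gap relative to the paper's own standard.
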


\paragraph{Estimation of the LKCs from repeated observations}
\label{sec:repeated-obs}

Let $f_1,...,f_N\sim f$ be iid random fields over the domain $S$ with unknown covariance function satisfying \textbf{(G1)}-\textbf{(G3)} and let ${\bm \L} = ({\L}_{1},\ldots,{\L}_{D})^{\tt T}$ be the vector of true LKCs. Then each EC curve $\chi_{f_n}$ yields a vector of LKC estimates $\hat{\bm \L}_n = (\hat{\L}_{1n},\ldots,\hat{\L}_{Dn})^{\tt T}$. The average estimator is defined by
\begin{equation}
\label{eq:L-hat}
\hat{\bm \L}^{(N)} = (\hat{\L}_1^{(N)},\ldots,\hat{\L}_D^{(N)})^{\tt T} = \frac{1}{N}\sum_{n=1}^N \hat{\bm \L}_n\,.
\end{equation}

The next result shows unbiasedness, consistency and a CLT for the this estimator. It is an immediate consequence of Theorem \ref{thm:UnbiasedAndVariance} by applying the strong law of large numbers (SLLN) and the standard multivariate central limit theorem (CLT).
\begin{corr}\label{cor:repLKC-unbiased}
	Under the assumptions of Theorem \ref{thm:UnbiasedAndVariance}:
	\begin{enumerate}
		\item[(i)] Assume\textbf{(G4a)}. Then $\hat{\bm \L}^{(N)}$ is unbiased and $\hat{\bm \L}^{(N)}\xrightarrow{~a.s.~} \bm{\L},$ as $N \to \infty$.
		\item[(ii)] Assume \textbf{(G4b)}. Then
					$\sqrt{N} \left(\hat{\bm \L}^{(N)} - {\bm \L} \right) \xrightarrow{~D~}  N(0,\bSigma)$,
		where $\bSigma$ is given by \eqref{eq:cov-L}.
	\end{enumerate}
\end{corr}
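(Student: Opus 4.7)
The plan is to treat this corollary as the direct distributional consequence of Theorem \ref{thm:UnbiasedAndVariance} combined with the classical iid limit theorems. The single-observation estimator $\hat{\bm \L}_n$ is a measurable functional of $f_n$ alone (through the empirical EC curve $\chi_{f_n}$ and the Hermite projector $\H_d$), so since $f_1,\ldots,f_N$ are iid copies of $f$, the vectors $\hat{\bm \L}_1,\ldots,\hat{\bm \L}_N$ inherit iidness. The conclusions then follow by invoking the Kolmogorov SLLN and the Lindeberg--L\'evy multivariate CLT on this iid sample, once the appropriate moments have been guaranteed by Theorem \ref{thm:UnbiasedAndVariance}.

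For part (i), I would first note that under \textbf{(G4a)}, Theorem \ref{thm:UnbiasedAndVariance}(i) yields $\E[\hat{\bm \L}_n]=\bm \L$ componentwise, so by linearity of expectation $\E[\hat{\bm \L}^{(N)}] = \bm \L$, giving unbiasedness. For the almost sure convergence, each component $\hat{\L}_{dn}$ is integrable, so Kolmogorov's SLLN applied componentwise gives $\hat{\L}_d^{(N)} \xrightarrow{a.s.} \L_d$ for every $d=1,\ldots,D$, and intersecting the $D$ almost-sure sets yields the joint statement $\hat{\bm \L}^{(N)} \xrightarrow{a.s.} \bm \L$.

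For part (ii), under \textbf{(G4b)} Theorem \ref{thm:UnbiasedAndVariance}(ii) ensures that every entry $\sigma_{dd'}$ of the covariance matrix $\bSigma$ defined in \eqref{eq:cov-L} is finite, i.e.\ $\hat{\bm \L}_n$ has a finite second moment. Since the $\hat{\bm \L}_n$ are iid random vectors in $\R^D$ with common mean $\bm \L$ and common covariance $\bSigma$, the multivariate Lindeberg--L\'evy CLT immediately gives $\sqrt{N}(\hat{\bm \L}^{(N)} - \bm \L) \xrightarrow{D} N(0,\bSigma)$.

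Honestly, there is no real obstacle here: all the analytic heavy lifting (the Fubini/Young/Borel--TIS argument showing integrability of the Hermite-weighted EC functional, and the covariance finiteness in \eqref{eq:cov-L}) has already been absorbed into Theorem \ref{thm:UnbiasedAndVariance}. The only thing worth being explicit about is measurability of $\hat{\bm \L}_n$ as a functional of $f_n$ — this follows from the critical-value representation in Theorem \ref{thm:CritValRepresentation}, which writes $\hat{\L}_{dn}$ as a finite polynomial expression in the (measurable) critical values and EC jumps of $f_n$, so that iidness of the fields transfers to iidness of the estimator vectors and both classical limit theorems apply without modification.
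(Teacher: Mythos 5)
Your proposal is correct and matches the paper's own argument: the paper likewise treats this corollary as an immediate consequence of Theorem \ref{thm:UnbiasedAndVariance} by applying the strong law of large numbers and the standard multivariate central limit theorem to the iid vectors $\hat{\bm \L}_1,\ldots,\hat{\bm \L}_N$. Your additional remark on measurability via the critical-value representation is a harmless (and reasonable) elaboration, not a different route.
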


With repeated observations, the covariance matrix $\bSigma$ can be estimated unbiasedly and consistently from the data via
\begin{equation}
\label{eq:cov-L-hat}
\hat{\bSigma}^{(N)} = \frac{1}{N-1} \sum_{n=1}^N \left[\hat{\bm \L}_n - \hat{\bm \L}^{(N)}\right]\left[\hat{\bm \L}_n - \hat{\bm \L}^{(N)}\right]^{\tt T}.
\end{equation}

\section{LKC estimation from non-Gaussian fields}\label{sec:LKC-estim-asym}

The previously developed theory relies heavily on the assumption that $f$ is a mean zero and variance one Gaussian field. Usually, these assumptions are not satisfied in applications. Therefore the purpose of this section is to remove them, including that the observed fields are independent and identically distributed.

The first observation is that LKCs are rather a property of the correlation of a field $f$ than a property of the field itself. They solely depend on the Riemannian metric on $S$ induced by $f$, which is determined by the correlation of its gradient field, compare \cite[Chapter 7 and 12]{RFG:2007}. Secondly, inferential tools often rely on a fCLT, as for example in neuroimaging, see Section \ref{sec:linear-models}.
Thus, often only the LKCs of a limiting mean zero variance one Gaussian field $G$ with correlation $\mathfrak{r}$ are of interest.

\paragraph{Bootstrapped Hermite projection estimator}\label{sec:bHPEdef}
Assume we are interested in the LKCs of a limiting Gaussian field $G$ satisfying \textbf{(G1)}-\textbf{(G4)}. In order to generalize the HPE, let $R_1,...,R_N$ be random fields constructed from observations satisfying
\begin{enumerate}[leftmargin=1.4cm]
    \item[\textbf{(R1)}] $\sum_{n=1}^N R_n(s) = 0$ and $\sum_{n=1}^N R_n^2(s) = 1$ for all $s\in S$.
    \item[\textbf{(R2)}] $\hat{\mathfrak{r}}\vphantom{r}^{(N)}(s,s')=N^{-1}\sum_{n=1}^N R_n(s)R_n(s')$ and its partial derivatives up to order 2 converge uniformly almost surely to ${\mathfrak{r}}$ and its partial derivatives, respectively.
\end{enumerate}
We call $R_1,...,R_N$ \textit{standardized residuals}. The name stems from the following example. Suppose $f_1,...,f_N\sim f$ is an iid sample of random fields satisfying a fCLT, i.e., 
\begin{equation}
	\frac{1}{\sqrt{N}}\sum_{n=1}^N \frac{ f_N-\E[f] }{\sqrt{\Var[f]}} \xrightarrow{~D~}G
\end{equation}
as $N\rightarrow\infty$. Further assume that the mean and variance are unknown. Then standardized residuals satisfying \textbf{(R1)}, and under appropriate conditions on $f$ also \textbf{(R2)}, are given by
\begin{equation}
 R_n = \frac{f_n - \bar f}{\sqrt{\sum_{n=1}^N \big(f_n-\bar f\big)^2}}\,.
\end{equation}
Note that, even if $f$ is Gaussian, these standardized residuals are not. Thus, they cannot immediately be used to estimate the LKCs with the HPE.

The key is that LKCs depend only on the correlation ${\mathfrak{r}}$. Therefore, using standardized residuals, we construct a Gaussian field approximating this unknown correlation.
\begin{defn}[Gaussian Multiplier Field]\label{def:GMF}
    Given a set of standardized residuals $R_1,..., R_N$ satisfying \textbf{(R1)} and \textbf{(R2)}, we define the \textit{Gaussian multiplier field} (GMF) by
    \begin{equation}\label{eq:AGMB}
            R^{(N)}_{\mathbf{g}} = \sum_{n=1}^N g_n R_n\,,
    \end{equation}
    where $\mathbf{g}=(g_1,...,g_N)\sim N(0,I_{N\times N})$.
\end{defn}

The GMF conditioned on $R_1,..., R_N$ is a Gaussian field and it can immediately be verified that the GMF satisfies
\begin{equation}
	\E\Big[ R^{(N)}_{\mathbf{g}}~\mid~R_1,...,R_N ~\Big] = 0 ~ ~\text{ and }~ ~
	\E\Big[ R^{(N)}_{\mathbf{g}}(s) R^{(N)}_{\mathbf{g}}(s')~\mid~R_1,...,R_N ~\Big] = \hat{\mathfrak{r}}\vphantom{r}^{(N)}(s,s')
\end{equation}
for all $s,s'\in S$. Thus, for fixed observed residuals $R_1,..., R_N$, we apply the HPE \eqref{eq:observed-LKC} to an iid bootstrap sample $R^{(N)}_{\mathbf{g}_1},...,R^{(N)}_{\mathbf{g}_M}\sim R^{(N)}_{\mathbf{g}}$ and obtain an estimator 
\begin{equation}\label{eq:ECproj-boots-estim}
    \bm{\hat{\L}}_B^{(N)} = \lim_{M\rightarrow\infty}\frac{1}{M}\sum_{m=1}^M \bm{\hat{\L}}\Big( R^{(N)}_{\mathbf{g}_m}  \Big).
\end{equation}
of the LKCs of the mean zero variance one Gaussian field having correlation structre ${\mathfrak{r}}$. We call this estimator the {\it bootstrap Hermite projection estimator} (bHPE).

\paragraph{Properties of the bootstrapped Hermite projection estimator}\label{subscn:bHPE-prop}
Since the bHPE is defined as a limit, we need to show that this limit exists. This is established using Theorem \ref{thm:UnbiasedAndVariance}, since $R^{(N)}_{\mathbf{g}}$ conditioned on the residuals is a Gaussian field. We also prove that the bHPE is consistent. Let $\L(\mathfrak{r})$ denote the vector of LKCs of a field $G$ having correlation $\mathfrak{r}$.

\begin{thm}\label{thm:bHPE} Assume that for almost all $R_1,..,R_N$ and almost all $N>N'$ the Gaussian field $R^{(N)}_{\mathbf{g}}$ satisfies \textbf{(G1)}-\textbf{(G4a)} and that \textbf{(R1)} and \textbf{(R2)} are satisfied. Then
    \begin{enumerate}
        \item[(i)]  $\bm{\hat{\L}}_B^{(N)} = \bm{{\L}}\big(\hat{\mathfrak{r}}\vphantom{r}^{(N)}\big)$ almost surely.
        \item[(ii)] $\bm{\hat{\L}}_B^{(N)} \xrightarrow{~a.s.~} \bm{{\L}} (\mathfrak{r} )$ as $N\rightarrow\infty$.
    \end{enumerate}
\end{thm}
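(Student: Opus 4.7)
The plan is to tackle part (i) first, by conditioning on the residuals $R_1,\ldots,R_N$. Conditional on these residuals, the multiplier field $R^{(N)}_{\mathbf{g}}$ is a zero-mean, unit-variance Gaussian field whose correlation is exactly $\hat{\mathfrak{r}}\vphantom{r}^{(N)}$, and by hypothesis it satisfies \textbf{(G1)}--\textbf{(G4a)}. Thus Theorem \ref{thm:UnbiasedAndVariance}\,(i) applies conditionally and yields
\begin{equation*}
\E\!\left[\hat{\bm\L}\bigl(R^{(N)}_{\mathbf{g}}\bigr)\,\Big|\,R_1,\ldots,R_N\right] = \bm{\L}\bigl(\hat{\mathfrak{r}}\vphantom{r}^{(N)}\bigr).
\end{equation*}
Since the bootstrap draws $R^{(N)}_{\mathbf{g}_1}, R^{(N)}_{\mathbf{g}_2},\ldots$ are iid conditional on the residuals and the conditional mean is finite, Kolmogorov's SLLN applied coordinatewise gives
\begin{equation*}
\frac{1}{M}\sum_{m=1}^M \hat{\bm\L}\bigl(R^{(N)}_{\mathbf{g}_m}\bigr) \xrightarrow{~a.s.~} \bm{\L}\bigl(\hat{\mathfrak{r}}\vphantom{r}^{(N)}\bigr)
\end{equation*}
as $M\to\infty$, which, by the definition \eqref{eq:ECproj-boots-estim} of the bHPE, is exactly (i).

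For part (ii) it remains to prove $\bm{\L}\bigl(\hat{\mathfrak{r}}\vphantom{r}^{(N)}\bigr) \to \bm{\L}(\mathfrak{r})$ almost surely as $N\to\infty$. The structural observation is that the LKCs depend on the correlation only through the Riemannian metric $g_{ij}(s)=\partial_{s_i}\partial_{s'_j}\mathfrak{r}(s,s')\bigr|_{s=s'}$ and the geometric integrals over $S$ that define the intrinsic volumes under this metric (Thm.~12.4.2 of \citet{RFG:2007}). My strategy is to view the map $\mathfrak{r}\mapsto\bm{\L}(\mathfrak{r})$ as a continuous functional when the space of admissible correlations is topologised by uniform convergence of the function together with uniform convergence of all partial derivatives entering the GKF formula. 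Assumption \textbf{(R2)} supplies exactly such uniform almost-sure convergence up to order two, so on this topology $\hat{\mathfrak{r}}\vphantom{r}^{(N)}\to\mathfrak{r}$ a.s., and continuity then transports the convergence to the LKCs.

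The main obstacle is this continuity step. The LKC integrands involve determinants, square roots of determinants, and inverses of the metric matrix $g_{ij}$; these nonlinear operations are continuous only where the metric stays uniformly positive definite. Fortunately, the nondegeneracy condition \textbf{(G2)} on the limit field $G$ forces the smallest eigenvalue of the limit metric to be bounded below on the compact manifold $S$, and by the uniform convergence in \textbf{(R2)} the same lower bound (up to an arbitrarily small loss) holds for $\hat g\vphantom{g}^{(N)}$ once $N$ is large. On this compact, uniformly nondegenerate set of metrics the relevant functionals are Lipschitz in the sup-norm, which closes the continuity argument. A final bookkeeping point is that the boundary contributions in the GKF, attached to lower-dimensional strata of $\partial S$, must be handled stratum by stratum, but the identical argument applies since each stratum is compact and the same uniform bounds are inherited.
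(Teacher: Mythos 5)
Your proposal follows essentially the same route as the paper: part (i) is the conditional application of Theorem \ref{thm:UnbiasedAndVariance}(i) to the Gaussian multiplier field followed by the SLLN over the bootstrap replicates, and part (ii) is the continuity of the LKC functionals in the correlation under the uniform almost-sure convergence supplied by \textbf{(R2)}, which the paper phrases as dominated convergence applied to the integrands (metric and curvature terms) rather than as a Lipschitz/continuity bound. Your added attention to uniform positive definiteness of the limiting metric, needed for the determinants and inverses, is a point the paper leaves implicit but it does not alter the argument.
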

 Part $(i)$ shows that, for finite $N$, the bHPE is equal to the LKCs of a Gaussian field having the covariance structure of the sample correlation $\hat{\mathfrak{r}}\vphantom{r}^{(N)}$. Therefore it is a continuous version of the discrete quantity computed in the warping method \cite{Taylor:2007} and hence both estimators will have a similar variance, see Section \ref{sec:simulations}.

\section{Estimation of the EEC and Applications}
\paragraph{Plug-in estimation of the EEC}
\label{sec:parametric}

More generally, assume temporarily for this section that $\hat{\bm{\L}}\vphantom{L}^{(N)}=\big( \hat{\L}^{(N)}_1, ..., \hat{\L}^{(N)}_D \big)$ is an arbitrary estimator of the LKCs $\bm{{\L}}$. Given these estimates \citet{EC-regression2017} suggest the use of a plug-in estimator into the GKF
\begin{equation}
	\label{eq:EEC-hat-HPE}
	\widehat{\EEC}\vphantom{EEC}^{(N)}\!(u) = \L_0 \Phi^+(u) + \sum_{d=1}^D \hat{\L}^{(N)}_d \rho_d(u)
\end{equation}
to obtain a smooth estimate of the EEC curve. By linearity, we obtain that this estimator is unbiased for the EEC if $\hat{\bm{\L}}\vphantom{L}^{(N)}$ is an unbiased estimator. Moreover, since
\begin{equation}
	\widehat{\EEC}\vphantom{EEC}^{(N)}\!(u) - \EEC(u)  = \sum_{d=1}^D \Big(\hat{\L}^{(N)}_d - \L_{d}\Big) \rho_d(u)\,,
\end{equation}
the covariance function of the plug-in estimator can be expressed as
\begin{equation}
\label{eq:cov-chi-hat}
C(u,v) = \sum_{d=1}^D \sum_{d'=1}^D \sigma_{dd'}^{(N)} \rho_d(u) \rho_{d'}(v),
\end{equation}
where $\sigma_{dd'}^{(N)}$ is the $(d,d')$ entry of the covariance matrix $\bSigma^{(N)}$ of the vector $\hat{\bm{\L}}\vphantom{L}^{(N)}$. Note that the latter only is well-defined if $\bSigma^{(N)}$ exists. Similarily, consistency and a fCLT can be derived from corresponding properties of the estimator $\hat{\bm{\L}}\vphantom{L}^{(N)}$.

\begin{thm}\label{thm:ECcurveCLT}
Under the assumptions \textbf{(G1)-(G4)} the smooth plug-in estimate of the EEC curve has the following properties:
\begin{enumerate}
\item[(i)] Assume that $\hat{\bm{\L}}\vphantom{L}^{(N)}$ is consistent. Then the smooth plug-in estimate $\widehat{\EEC}\vphantom{EEC}^{(N)}$ and all its derivatives $\tfrac{d^k}{du^k}\widehat{\EEC}\vphantom{EEC}^{(N)}$ for $k\geq1$ are uniformly consistent estimators of $\EEC$ and its derivatives $\tfrac{d^k}{du^k}\EEC$, respectively.

\item[(ii)] Assume that $\sqrt{N}\Big(\hat{\bm{\L}}\vphantom{L}^{(N)}-\bm{\L}\Big)\xrightarrow{~D~} N(0,\bSigma)$. Then the smooth plug-in estimate $\widehat{\EEC}\vphantom{EEC}^{(N)}$ satisfies a fCLT such that
\begin{equation}
\label{eq:EEC-hat-CLT}
\sqrt{N} \left[\widehat{\EEC}\vphantom{EEC}^{(N)}\!(u) - \EEC(u)\right] \xrightarrow{~D~} G(u) = \sum_{d=1}^D Z_d \rho_{d}(u)
\end{equation}
weakly in $C(\R)$ as $N\rightarrow\infty$, where $(Z_1,\ldots,Z_D)$ is a multivariate Gaussian random variable with mean zero and covariance $\bSigma$.
\end{enumerate}
\end{thm}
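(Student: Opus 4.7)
The plan is to exploit the exact identity
$$\widehat{\EEC}\vphantom{EEC}^{(N)}(u) - \EEC(u) = \sum_{d=1}^D \bigl(\hat{\L}_d^{(N)} - \L_d\bigr)\rho_d(u),$$
which is already noted in the excerpt. This expresses the random function error as a fixed deterministic linear map applied to the random vector $\hat{\bm{\L}}\vphantom{L}^{(N)} - \bm{\L}$, so both conclusions reduce to continuity properties of that linear map.

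For part (i), I would first observe that each EC density $\rho_d(u) = (2\pi)^{-(d+1)/2} H_{d-1}(u) e^{-u^2/2}$ is a polynomial times a Gaussian kernel, hence belongs to $C_0(\R)$ with finite sup-norm $M_d := \|\rho_d\|_\infty$; the same holds for every derivative $\tfrac{d^k}{du^k}\rho_d$, since differentiation preserves the polynomial-times-Gaussian structure. Differentiating the identity term-by-term (legal because the sum is finite) and applying the triangle inequality gives
$$\sup_{u\in\R}\Bigl|\tfrac{d^k}{du^k}\widehat{\EEC}\vphantom{EEC}^{(N)}(u) - \tfrac{d^k}{du^k}\EEC(u)\Bigr| \;\le\; \sum_{d=1}^D M_d^{(k)}\,\bigl|\hat{\L}_d^{(N)} - \L_d\bigr|,$$
where $M_d^{(k)} := \|\tfrac{d^k}{du^k}\rho_d\|_\infty < \infty$. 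Coordinatewise consistency of $\hat{\bm{\L}}\vphantom{L}^{(N)}$ then immediately gives uniform convergence on all of $\R$.

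For part (ii), the strategy is the continuous mapping theorem. Define $T:\R^D \to C_0(\R)$ by $T(\bm{a}) = \sum_{d=1}^D a_d\,\rho_d$; the bound $\|T(\bm{a})\|_\infty \le \sum_d M_d |a_d|$ shows $T$ is a bounded linear map, hence continuous with respect to the uniform norm on $C_0(\R)$. Since
$$\sqrt{N}\bigl(\widehat{\EEC}\vphantom{EEC}^{(N)} - \EEC\bigr) \;=\; T\!\left(\sqrt{N}\bigl(\hat{\bm{\L}}\vphantom{L}^{(N)} - \bm{\L}\bigr)\right),$$
the assumed multivariate CLT combined with the continuous mapping theorem yields weak convergence of the left-hand side to $T(Z_1,\ldots,Z_D) = \sum_{d=1}^D Z_d\,\rho_d$ in $C_0(\R)$, which is \emph{a fortiori} weak convergence in $C(\R)$ endowed with the locally uniform topology.

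The main obstacle here is mild: there is no heavy analytic or probabilistic machinery beyond continuous mapping. The single point that deserves care is fixing the topology on $C(\R)$ — since the limit process and all approximants vanish at infinity, $C_0(\R)$ with the uniform norm is the natural ambient Banach space, and continuity of $T$ is transparent from the explicit product-of-polynomial-and-Gaussian form of the EC densities. Everything else is bookkeeping.
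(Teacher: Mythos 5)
Your proof is correct and takes essentially the same route as the paper: both reduce the statement to the continuity of the linear map $\bm{a}\mapsto\sum_{d=1}^D a_d\rho_d$ from $\R^D$ into a sup-normed function space and invoke the continuous mapping theorem, with part (i) following from the same identity plus boundedness of the EC densities and their derivatives. The only cosmetic difference is that the paper phrases the target space as the finite-dimensional subspace of $\ell^\infty(\R)$ spanned by the $\rho_d$ rather than $C_0(\R)$, which changes nothing of substance.
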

\begin{remark}
	\citet{Taylor:2006} proved a Gaussian Kinematic formula for Gaussian related fields. These fields can be written as $F(f_1,...,f_K)$, where $F:\R^K\rightarrow\R$ is twice continuously differentiable and $f_1,...,f_K\sim f$ are iid mean zero, variance one Gaussian fields satisfying \textbf{(G1)}-\textbf{(G4)}. Examples are pointwise $\chi^2$- and $t$-distributed fields. The above theorem immediately generalizes to plug-in estimators for the $\EEC$ obtained from such GKFs. 
\end{remark}

\paragraph{Hermite Projection Estimator of the EEC}
\label{sec:HPE-EEC}

Consider the EEC plug-in estimator \eqref{eq:EEC-hat-HPE} using the HPE of the LKCs from Section \ref{sec:repeated-obs}. With some abuse of nomenclature, we call this the \textit{ Hermite projection estimator} (HPE) of the EEC.
Theorem \ref{thm:ECcurveCLT} can be applied to this estimator, since the assumptions are satisfied by Corollary \ref{cor:repLKC-unbiased}.
\begin{corr}\label{cor:HPEsatisfiesEEC}
    Under \textbf{(G1)}-\textbf{(G4)} the result of Theorem \ref{thm:ECcurveCLT} is true for the HPE of the EEC. 
\end{corr}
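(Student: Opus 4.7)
The plan is a direct verification that the HPE of the LKCs meets the hypotheses of each part of Theorem \ref{thm:ECcurveCLT}, so the corollary reduces to two applications of that theorem.

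First, I would invoke Corollary \ref{cor:repLKC-unbiased}(i), which under \textbf{(G1)}-\textbf{(G4a)} yields $\hat{\bm{\L}}\vphantom{L}^{(N)} \xrightarrow{~a.s.~} \bm{\L}$ as $N \to \infty$. Almost sure convergence is strictly stronger than the consistency hypothesis of Theorem \ref{thm:ECcurveCLT}(i). Applying that theorem, $\widehat{\EEC}\vphantom{EEC}^{(N)}$ and all of its derivatives are uniformly consistent estimators of $\EEC$ and its derivatives. Second, Corollary \ref{cor:repLKC-unbiased}(ii) supplies the multivariate CLT $\sqrt{N}\bigl(\hat{\bm{\L}}\vphantom{L}^{(N)} - \bm{\L}\bigr) \xrightarrow{~D~} N(0,\bSigma)$ under \textbf{(G1)}-\textbf{(G4b)}, with $\bSigma$ given by \eqref{eq:cov-L}. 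This matches exactly the hypothesis of Theorem \ref{thm:ECcurveCLT}(ii), so its conclusion \eqref{eq:EEC-hat-CLT} follows immediately. Since \textbf{(G1)}-\textbf{(G4)} subsumes both \textbf{(G4a)} and \textbf{(G4b)}, both parts of the theorem apply simultaneously, and the corollary is proved.

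I do not expect any substantive obstacle, because the entire content of the statement is that the HPE belongs to the class of LKC estimators already treated abstractly in Theorem \ref{thm:ECcurveCLT}, and Corollary \ref{cor:repLKC-unbiased} is designed precisely to certify this. The only mild subtlety worth double-checking is that the functional weak convergence takes place in $C(\R)$: but the limiting process $\sum_{d=1}^D Z_d \rho_d$ is a finite linear combination of fixed smooth deterministic functions $\rho_d$ with coefficients given by the asymptotically Gaussian vector $\sqrt{N}\bigl(\hat{\bm{\L}}\vphantom{L}^{(N)} - \bm{\L}\bigr)$, so this reduces to continuity of the linear map $\R^D \ni \bm{z} \mapsto \sum_{d=1}^D z_d \rho_d \in C(\R)$ and an application of the continuous mapping theorem. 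This step is already absorbed into Theorem \ref{thm:ECcurveCLT}, so no further work is required here.
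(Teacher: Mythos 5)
Your proposal is correct and follows exactly the paper's own (implicit, one-line) argument: Corollary \ref{cor:repLKC-unbiased} certifies the consistency and CLT hypotheses of Theorem \ref{thm:ECcurveCLT} under \textbf{(G4a)} and \textbf{(G4b)} respectively, and the functional convergence in $C(\R)$ is already handled inside the proof of that theorem via the continuous mapping theorem. Nothing further is needed.
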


We call this EEC estimator HPE because it is in itself a projection with respect to the Hilbert space structure introduced in Section \ref{sec:functional} onto the subspace spanned by the EC densities $\rho_1,...,\rho_D$. Let $f_1,...,f_N\sim f$ be iid mean zero variance one Gaussian fields. Each HPE $\big( \hat{\L}_{1n},...,\hat{\L}_{Dn} \big)$ of LKCs from a single observation $f_n$ allows for a smooth estimate of the EC curves as a linear combination of the EC densities via
\begin{equation}
	\label{eq:EC-smoothing}
	\hat{\chi}_{f_n}(u) = \L_0 \Phi^+(u) + \sum_{d=1}^D \hat{\L}_{dn} \rho_d(u)\,.
\end{equation}
In fact, by linearity we can write the plugin EEC estimator using the HPE of the LKCs as the average of these smoothed EC curves, i.e.,
\begin{equation}
	\label{eq:EEC-hat-avg}
	\widehat{\EEC}\vphantom{EEC}^{(N)}\!(u) = \L_0 \Phi^+(u) + \sum_{d=1}^D \hat{\L}_{d}^{(N)} \rho_{d}(u) = \frac{1}{N}\sum_{n=1}^N \hat{\chi}_{f_n}(u)\,.
\end{equation}

At this point one may consider the alternative more straightforward estimation of the EEC curve as the sample average of the observed EC curves,
\begin{equation}
	\label{eq:ECavg}
	\bar{\chi}^{(N)}(u) = \frac{1}{N}\sum_{n=1}^N \chi_f(u)\,.
\end{equation}
Since these curves are iid and have finite variance under \textbf{(G4b)}, \eqref{eq:ECavg} is an unbiased and consistent estimator of the EEC satisfying pointwise a CLT (proving a fCLT as in \eqref{eq:EEC-hat-CLT} is tedious, since the covariance function $(u,v)\mapsto \Cov\big[ \chi_f(u), \chi_{f}(v) \big]$ is not well studied).

It turns out that the HPE of the EEC is a smoothed version of the raw average in \eqref{eq:ECavg}. Using linearity of the Hermite projection \eqref{eq:projection}, the HPE \eqref{eq:L-hat} of LKCs can be equivalently obtained from the average EC curve \eqref{eq:ECavg} as
    \begin{equation}
    \label{eq:L-hat-equiv}
    \hat{\L}_d^{(N)} = \H_d\Big\{ \bar{\chi}^{(N)}(u) - \L_0 \Phi^+(u) \Big\}.
    \end{equation}
    Thus, using the HPE and plugin it into \eqref{eq:EEC-hat-avg} and using \eqref{eq:projection} yields
    \begin{equation}
		\label{eq:EEC-hat}
		\widehat{\EEC}\vphantom{EEC}^{(N)}\!(u) = \L_0 \Phi^+(u) + \sum_{d=1}^D \frac{ \left\langle \bar{\chi}^{(N)}(u) - \L_0 \Phi^+(u) , \rho_d \right\rangle}{|\rho_d|^2} \rho_d(u)\,.
	\end{equation}
In fact, this is the orthogonal projection of $\bar{\chi}_f(u)$ onto the vector space spanned by the EC densities according to the inner product \eqref{eq:inner-product}, hence the name HPE.

The fCLT \eqref{eq:EEC-hat-CLT} for $\widehat{\EEC}\vphantom{EEC}^{(N)}$ allows constructing confidence regions for the EEC curve. In particular, asymptotic two-sided $1-\alpha$  pointwise confidence bands can be built as
\begin{equation}\label{eq:EEC-hat-conf}
\widehat{\EEC}\vphantom{EEC}^{(N)}\!(u) \pm z_{1-\alpha/2} \sqrt{\hat{C}(u,u)/N},
\end{equation}
where the estimate of the variance function $\hat{C}(u,u)$ is obtained as a plug-in estimator substituting the sample covariance $\hat{\bSigma}^{(N)}$ given into \eqref{eq:cov-L-hat} in \eqref{eq:cov-chi-hat} and $z_{1-\alpha/2}$ is the $1-\alpha/2$ quantile of the standard normal distribution.
Moreover, using the GKF or the multiplier-t bootstrap for the normalized limiting process $G(u)$ in \eqref{eq:EEC-hat-CLT}, it is possible to provide asymptotic simultaneous confidence bands for $\EEC(u)$ for any compact set $U\subset\R$ using the construction from \cite{Telschow:2019}.

In our simulations in Section \ref{sec:simulations} we will show that the confidence bands \eqref{eq:EEC-hat-conf} are tighter and have better coverage than the pointwise confidence intervals
\begin{equation}\label{eq:ECavg-conf}
	\bar{\chi}^{(N)}(u) \pm z_{1-\alpha/2} \sqrt{\widehat{\Var}[\chi_f(u)]/N}\,,
\end{equation}
where $\widehat{\Var}[\chi_f(u)]$ is the sample variance of $\chi_{f_1}(u),\ldots,\chi_{f_N}(u)$, which are based on the pointwise CLT for the sample average $\bar{\chi}^{(N)}$.

\paragraph{Inference based on the estimated EEC}
\label{sec:threshold}

The estimated EEC curve is used as a tool for statistical inference, for example, by finding a threshold $u$ such that $\widehat{\EEC}(u)$ is less or equal to a pre-specified value $\alpha$. This was the pioneering approach taken by Keith Worsley in the analysis of brain images, see for example \citep{Worsley:1996,Worsley:2004}. Especially, he used the EEC curve to bound the FWER of voxelwise given test statistics on an image. In order to achieve this he used the so called \textit{Euler characteristic heuristic}, which states that for Gaussian related fields $T$ and high enough threshold $u$ the maximum of $T$ being larger than $u$ can be well approximated by the EEC of the excursions set above $u$, i.e.,
\begin{equation}
	\alpha = \EEC(u_\alpha) \approx \P\left\{\sup_{s\in S} T(s) > u_\alpha\right\}\,.
\end{equation}
This approximation typically works well with $\alpha \leq 0.05$. If $T$ is a mean zero variance one Gaussian field, this heuristic has been theoretically validated in \citet{Taylor:2005}.

Another interpretation of thresholding the EEC curve is the following. For high but somewhat lower thresholds than for FWER control, the EEC curve counts the expected number of connected components in $A_n(u)$. Thus the threshold $u_\alpha$ controls the expected number of connected components, or cluster error rate (CER), to be below $\alpha$. Although uncommon, this approach has been used in neuroimaging as well with $\alpha = 1$ \citep{Bullmore:1999}, meaning the excursion set contains on average one false connected component.

Suppose now that the EEC curve is estimated via \eqref{eq:EEC-hat} and a significance threshold $\hat{u}_\alpha^{(N)}$ is found that satisfies
$\widehat{\EEC}\vphantom{EEC}^{(N)}\!\left(\hat{u}_\alpha^{(N)}\right) = \alpha$.
We wish to know how variable is this threshold.
The following Theorem gives the asymptotic properties of the estimated threshold. The results are valid for any EEC estimator for which the results of Theorem \ref{thm:ECcurveCLT} hold, in particular the HPE.
 Let $\EEC'(u)$ denote the derivative of the EEC curve, i.e.,
		\[
		\EEC'(u) = \sum_{d=0}^D \L_{d} \rho_{d}'(u) = \sqrt{2\pi} \sum_{d=0}^D \L_{d} \rho_{d+1}(u).
		\]
\begin{thm}
	\label{thm:CLT-threshold}
	Assume that the results of Theorem \ref{thm:ECcurveCLT} are satisfied. Then the random threshold $\hat{u}_\alpha^{(N)}$ has the following properties:
	\begin{enumerate}
		\item[(i)] $\hat{u}_\alpha^{(N)}$ is a consistent estimator of $u_\alpha$.
		\item[(ii)] Let $\alpha$ be small enough such that $\EEC'(u_\alpha) \ne 0$. Then $\sqrt{N}\left(\hat{u}_\alpha^{(N)} - u_\alpha\right)$ converges in distribution to $G(u_0)/\EEC'(u_\alpha)$,
		where $G(u)$ is the Gaussian process in \eqref{eq:EEC-hat-CLT}.
	\end{enumerate}
\end{thm}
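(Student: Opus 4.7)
The overall strategy is a functional delta-method argument applied to the implicit identity $\widehat{\EEC}\vphantom{EEC}^{(N)}(\hat u_\alpha^{(N)}) = \alpha = \EEC(u_\alpha)$, combined with a mean-value expansion of the deterministic curve $\EEC$ around its level crossing $u_\alpha$. Theorem \ref{thm:ECcurveCLT} supplies the two ingredients: uniform consistency of $\widehat{\EEC}\vphantom{EEC}^{(N)}$ from part (i) and a functional CLT in $C(\R)$ for the centred process from part (ii).

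For consistency (i), observe that $\EEC$ is real-analytic as a finite combination of Hermite polynomials times a Gaussian, and the hypothesis $\EEC'(u_\alpha)\neq 0$ makes $\EEC$ strictly monotone on some neighbourhood $[u_\alpha-\eta, u_\alpha+\eta]$. On any compact window where $u_\alpha$ is the unique $\alpha$-crossing, $|\EEC-\alpha|$ is bounded below by a positive constant outside $[u_\alpha-\eta, u_\alpha+\eta]$. The uniform closeness $\sup_u|\widehat{\EEC}\vphantom{EEC}^{(N)}(u)-\EEC(u)|\to 0$ from Theorem \ref{thm:ECcurveCLT}(i) then forces every root $\hat u_\alpha^{(N)}$ of $\widehat{\EEC}\vphantom{EEC}^{(N)}-\alpha$ to lie in $[u_\alpha-\eta, u_\alpha+\eta]$ with probability tending to one; letting $\eta\downarrow 0$ yields $\hat u_\alpha^{(N)}\to u_\alpha$.

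For the asymptotic distribution (ii), I would subtract and add $\EEC(\hat u_\alpha^{(N)})$ inside the defining identity to obtain
\begin{equation*}
\bigl[\widehat{\EEC}\vphantom{EEC}^{(N)}(\hat u_\alpha^{(N)})-\EEC(\hat u_\alpha^{(N)})\bigr] + \bigl[\EEC(\hat u_\alpha^{(N)})-\EEC(u_\alpha)\bigr] = 0.
\end{equation*}
The mean-value theorem rewrites the second bracket as $\EEC'(\tilde u^{(N)})(\hat u_\alpha^{(N)}-u_\alpha)$ for some $\tilde u^{(N)}$ between $u_\alpha$ and $\hat u_\alpha^{(N)}$, so after multiplying by $\sqrt{N}$,
\begin{equation*}
\sqrt{N}\bigl(\hat u_\alpha^{(N)}-u_\alpha\bigr) = -\frac{\sqrt{N}\bigl[\widehat{\EEC}\vphantom{EEC}^{(N)}(\hat u_\alpha^{(N)})-\EEC(\hat u_\alpha^{(N)})\bigr]}{\EEC'(\tilde u^{(N)})}.
\end{equation*}
By (i) and continuity of $\EEC'$, the denominator converges in probability to $\EEC'(u_\alpha)\neq 0$, so only the numerator requires care.

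The main obstacle is that the fCLT in Theorem \ref{thm:ECcurveCLT}(ii) gives weak convergence of the entire centred process in $C(\R)$, whereas the numerator evaluates this random function at the \emph{random} argument $\hat u_\alpha^{(N)}$. The cleanest route is the Skorokhod representation theorem: re-realise the processes on a common space so that $\sqrt{N}[\widehat{\EEC}\vphantom{EEC}^{(N)}-\EEC]\to G$ holds almost surely in the topology of uniform convergence on compacts. Because the limit $G=\sum_{d=1}^D Z_d\rho_d$ has almost surely continuous sample paths, combining this with $\hat u_\alpha^{(N)}\to u_\alpha$ from (i) yields
\begin{equation*}
\sqrt{N}\bigl[\widehat{\EEC}\vphantom{EEC}^{(N)}(\hat u_\alpha^{(N)})-\EEC(\hat u_\alpha^{(N)})\bigr] \longrightarrow G(u_\alpha)
\end{equation*}
almost surely on the coupled space, hence in distribution on the original one. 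Slutsky's lemma then delivers $\sqrt{N}(\hat u_\alpha^{(N)}-u_\alpha)\xrightarrow{D} -G(u_\alpha)/\EEC'(u_\alpha)$, which by the mean-zero symmetry of the Gaussian $G$ coincides in law with $G(u_\alpha)/\EEC'(u_\alpha)$ as claimed.
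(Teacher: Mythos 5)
Your proof is correct, but part (ii) takes a genuinely different route from the paper's. The paper (via its Lemma on the inverse delta method) Taylor-expands the \emph{random} curve $\widehat{\EEC}\vphantom{EEC}^{(N)}$ around the \emph{deterministic} point $u_\alpha$, writing $0 = \widehat{\EEC}\vphantom{EEC}^{(N)}(u_\alpha) - \alpha + (\hat u_\alpha^{(N)} - u_\alpha)\,\tfrac{d}{du}\widehat{\EEC}\vphantom{EEC}^{(N)}(u^*)$; the centred process is then only ever evaluated at the fixed argument $u_\alpha$, so the marginal of the fCLT suffices for the numerator, and the price paid is that the denominator involves the derivative of the \emph{estimator}, whose uniform consistency must be (and is) supplied by Theorem \ref{thm:ECcurveCLT}(i). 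You instead apply the mean value theorem to the deterministic curve $\EEC$, which pushes the randomness into the evaluation point: your numerator is the centred process at the random argument $\hat u_\alpha^{(N)}$, and you resolve this with a Skorokhod coupling plus continuity of the limit $G$ and uniform convergence on compacts. Both arguments are sound. Your version needs nothing about $\tfrac{d}{du}\widehat{\EEC}\vphantom{EEC}^{(N)}$, only smoothness of the limit curve, so it is marginally more general; the paper's version avoids the random-argument/coupling step entirely and reduces everything to finite-dimensional convergence, which is why it can be stated as a clean standalone lemma. (In the present setting your coupling could even be replaced by the elementary observation that $\sqrt{N}[\widehat{\EEC}\vphantom{EEC}^{(N)} - \EEC] = \sum_{d}\sqrt{N}(\hat\L_d^{(N)}-\L_d)\rho_d$ is a finite linear combination of fixed smooth functions, so evaluating at $\hat u_\alpha^{(N)}$ versus $u_\alpha$ differs by $o_P(1)$ directly.) Two minor points: your consistency argument in (i) invokes $\EEC'(u_\alpha)\neq 0$, which the theorem only assumes for part (ii) — the paper's lemma instead assumes strict monotonicity of $\EEC$ on an interval containing $u_\alpha$ and defines $\hat u_\alpha^{(N)}$ as the supremum of roots there, which you should mirror to handle possible multiple roots; and your closing appeal to the symmetry of $G$ to drop the minus sign is the same (implicit) step the paper takes.
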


A consequence of Theorem \ref{thm:CLT-threshold} is that, for large $N$, $\hat{u}_\alpha$ is approximately Gaussian with mean $u_\alpha$ and variance
\begin{equation}\label{eq:var-u}
\Var\left[\hat{u}^{(N)}_\alpha\right] = \frac{C(u_\alpha,u_\alpha)}{N [\EEC'(u_\alpha)]^2},
\end{equation}
where $C(u,u)$ is given by \eqref{eq:cov-chi-hat}.
In practice, we estimate this variance by substituting estimators for $u_\alpha$, $C(u,u)$ and the LKCs as described above.

\vspace{-0.4cm}

\section{Simulation studies}\label{sec:simulations}

\paragraph{Design of the simulations}\label{sec:SimulationSetup}
 
We compare the Hermite projection estimator (HPE) given by \eqref{eq:L-hat}, the bootstrapped Hermite projection estimator (bHPE) from \eqref{eq:ECproj-boots-estim}, the warping estimator (WarpE), c.f. \cite{Taylor:2007} and an estimator (IsotE) taylored to isotropic fields with the square exponential covariance function, c.f. \cite{Worsley:1996b} and \cite{Kiebel:1999}. The latter is currently implemented in the software package SPM12 used in the neuroimaging community.
We always use $1000$ Monte Carlo runs and consider two different scenarios for estimation of the LKCs. The {\it theoretical scenario}, assumes that the mean and variance are known and equal to zero and one, respectively, as required for the theory presented in Sections \ref{sec:Gaussestimator}. The {\it experimental scenario}, assumes that the mean and variance are unknown. Thus standardized residuals are used as input for the estimators as discussed in Sections \ref{sec:LKC-estim-asym} and \ref{sec:linear-models} instead of using the generated samples directly.

\paragraph{Isotropic Gaussian and non-Gaussian fields}

A common example of an isotropic Gaussian random field is
\begin{equation}\label{eq:isotropic-2D}
f(s) =  \frac{1}{\sqrt{\pi} \nu}\iint e^{-\frac{\|s-t\|^2}{2\nu^2}} W(dt)\,,\text{ for }s\in[1,L]^2\,,
\end{equation}
where $W$ is a Wiener field (white noise) on $\R^2$. Its covariance function is $C(s) = e^{-\beta \|t\|^2}$ with $\beta = 1/(4\nu^2)$. For an isotropic field on a compact domain $S \subset \R^2$ the LKCs can be derived  by $\L_0 = \L_0(S)$, $\L_1 = \lambda_2^{1/2} \L_1(S)$ and $\L_2 = \lambda_2 \L_2(S)$, where $\L_0(S)$, $\L_1(S)$, and $\L_2(S)$ are the LKCs of the domain $S$ with respect to the standard metric and $\lambda_2$ is the second spectral moment, equal to the variance of any directional derivative of $f$ \citep{RFG:2007}. For \eqref{eq:isotropic-2D} this yields $\L_0 = 1$, $\L_1 = 2\sqrt{2\beta}(L-1)$ and $\L_2 = 2\beta (L-1)^2$. Setting $\nu = 5$ and $L=50$ gives $\L_1 = 13.86$ and $\L_2 = 48.02$, which are used in this simulation.

In order to generate an iid sample $f_1(s),...,f_N(s)$ from \eqref{eq:isotropic-2D}, we approximate it by
\begin{equation}\label{eq:discr-isotropic-2D}
 f_n(s) =  \frac{\sum_{k,l=1}^L e^{-\frac{\|s-(k,l)\|^2}{2\nu^2}} W_{kl;n}}{\sum_{k,l=1}^L e^{-\frac{\|s-(k,l)\|^2}{\nu^2}}}\,,
\end{equation}
where $W_{kl;n}$, $k,l=1,...,L$, $n = 1,\ldots,N$ are iid random variables with mean 0 and variance 1. We consider a Gaussian case, where each of these variables is $N(0,1)$ and a non-Gaussian case where each variable is $\left(\chi^2_3-3\right)/\sqrt{6}$. Since LKCs depend only on the covariance structure, the LKCs for both choices of smoothed discrete random variables can be approximated by the LKCs of the field \eqref{eq:isotropic-2D}, if $\nu$ is large enough.

\begin{figure}[H]
\begin{center}
\begin{turn}{90}
\textbf{\hspace{0.5in}Theoretical}
\end{turn}
		\includegraphics[trim=50 0 40 30,clip,width=5.5in]{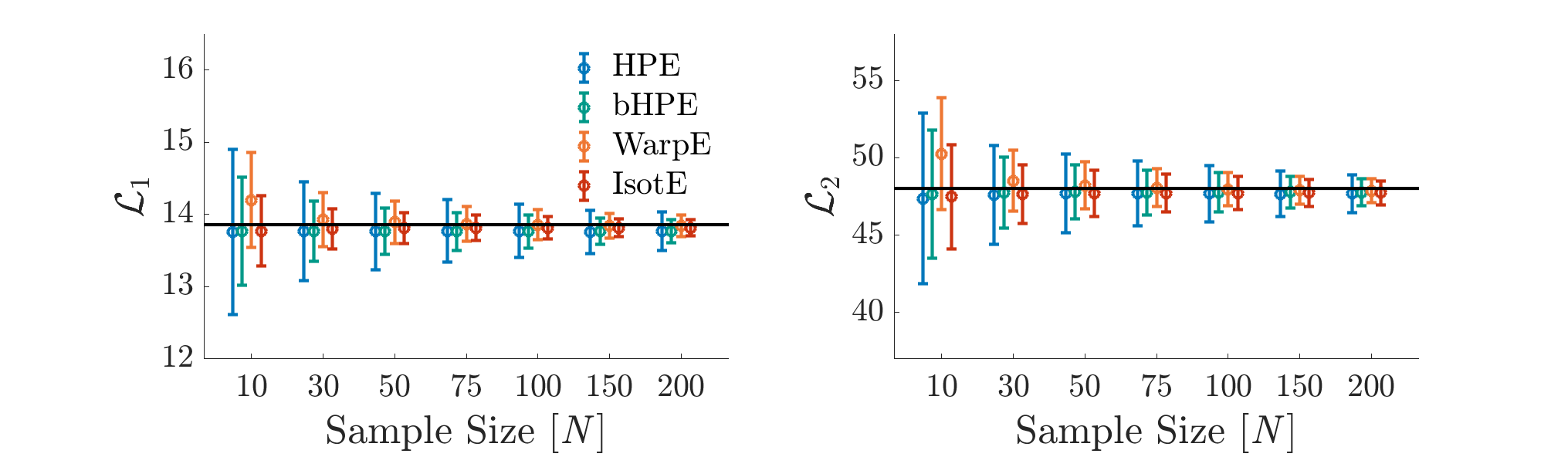}\\
		\vspace{0.1cm}
\begin{turn}{90}
\textbf{\hspace{0.5in}Experimental}
\end{turn}				
		\includegraphics[trim=50 0 40 30,clip,width=5.5in]{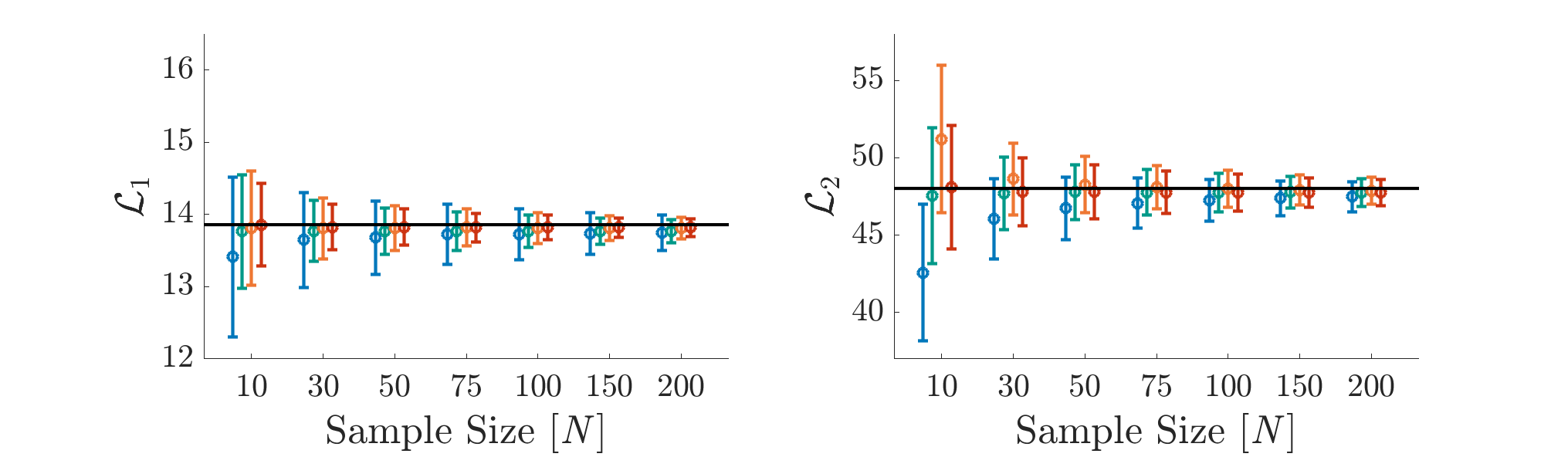}
\end{center}

\vspace*{-0.8cm}

	\caption{Isotropic Gaussian field ($\nu = 5$ and $L=50$): comparison of mean and standard deviation of different LKC estimators. Black lines represent the theoretical LKCs. 
	\label{fig:LKC-isotropic}}
\end{figure}

\paragraph{LKC estimation}
In the Gaussian case, Figure \ref{fig:LKC-isotropic} confirms the theoretical results that the HPE and the bHPE are consistent in the theoretical scenario. For every $N$, the HPE is almost unbiased in the theoretical scenario, as predicted by Theorem \ref{thm:UnbiasedAndVariance}\,(i), but the bHPE is almost unbiased in both the theoretical and experimental scenarios. Moreover, note that the bHPE has a similar variance as WarpE, which is considerably smaller than the variance of the HPE. The small downward bias seems to prevail for all estimators even for large $N$. To elucidate this, Table \ref{tab:Smoothness} shows the relative difference between the HPE and the theoretical LKCs of \eqref{eq:isotropic-2D}, which vanishes as $\nu$ increases. This indicates that the "bias" is due to the fact that the simulated field \eqref{eq:discr-isotropic-2D} has slightly different LKCs than the theoretical field \eqref{eq:isotropic-2D}. 

For non-Gaussian data, Figure \ref{fig:LKC-nongauss} shows that the bHPE is still unbiased. Only the HPE cannot handle this scenario, since the EC curves are not derived from a Gaussian field. WarpE and IsotE are mainly based on estimation of the covariance of the derivative of the random field and therefore are expected to still work for this particular non-Gaussian field.

\begin{figure}[H]
	\begin{center}
	\begin{turn}{90}
    \textbf{\hspace{0.5in}Theoretical}
    \end{turn}
	\includegraphics[trim=50 0 40 30,clip,width=5.5in]{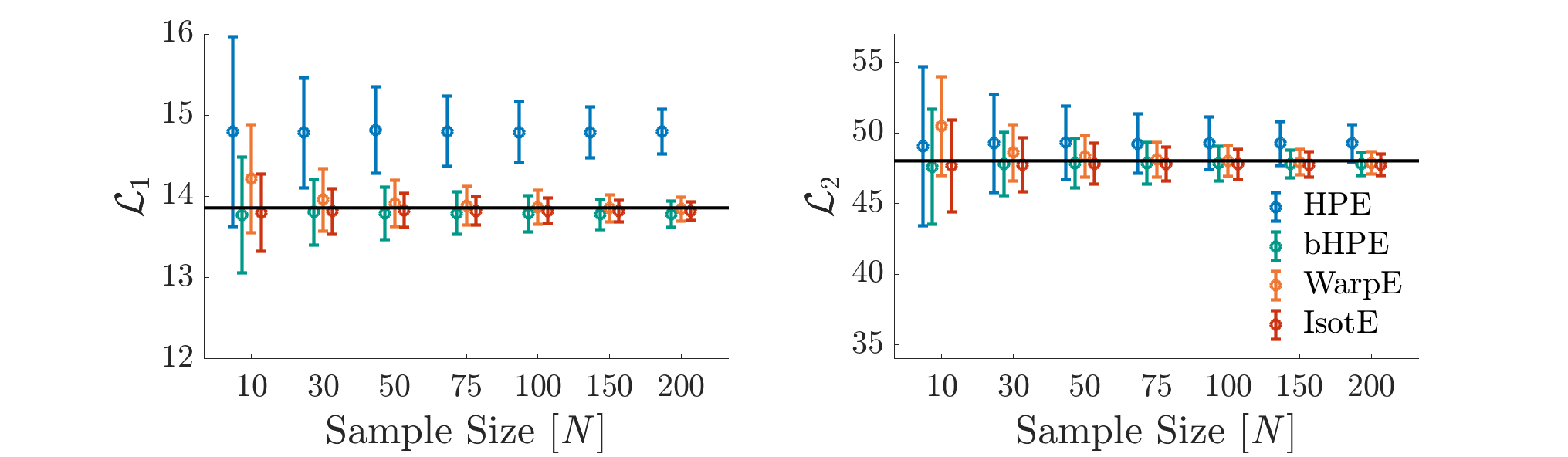}\\
		\vspace{0.1cm}
        \begin{turn}{90}
        \textbf{\hspace{0.5in}Experimental}
        \end{turn}		
		\includegraphics[trim=50 0 40 30,clip,width=5.5in]{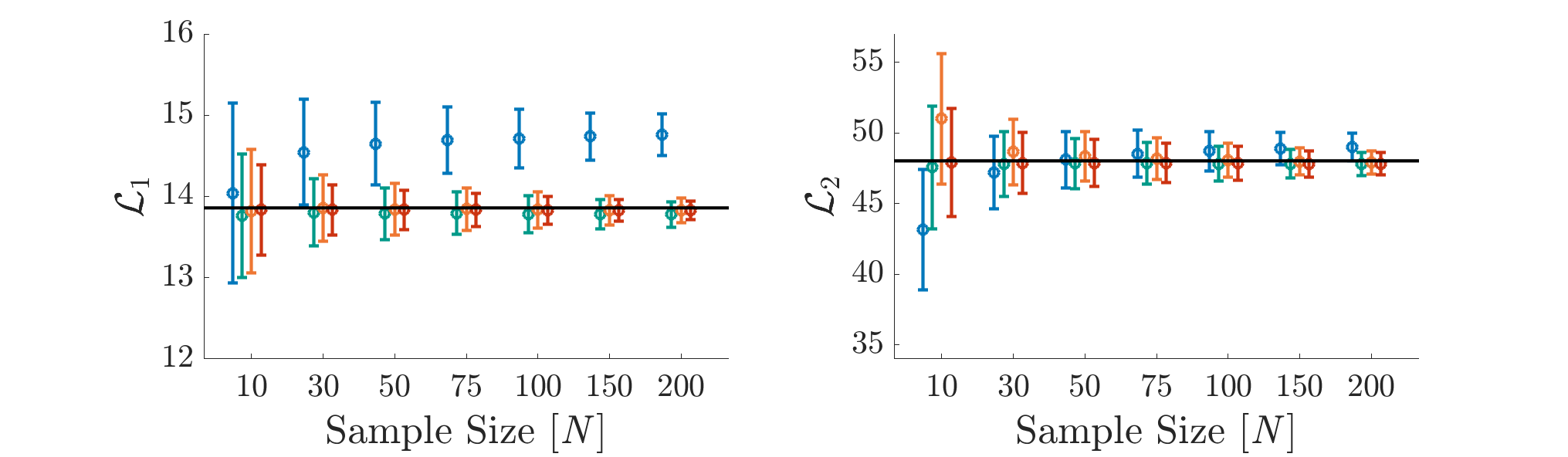}
	\end{center}
	\vspace*{-1.0cm}
	\caption{Isotropic non-Gaussian field ($\nu = 5$ and $L=50$): comparison of mean and standard deviation of different LKC estimators. Black lines represent the theoretical LKCs. 
	\label{fig:LKC-nongauss}}
\end{figure}

\begin{table}[t]\scriptsize
    \begin{center}
    \begin{tabular}{|c|ccccc|}
        \hline
        $\nu$ &       2 &       3  &      5   &     6    &   7\\\hline
    $\hat{\L}_1$, $N=10$ & -0.0195 & -0.0179 & -0.0075 & -0.0025 & 0.0011 \\
    $\hat{\L}_2$, $N=10$ & -0.0169 & -0.0090 & -0.0097 & -0.0069 & 0.0045 \\
    $\hat \L_1$, $N=75$ & -0.0240 & -0.0121 & -0.0075 & -0.0047 & 0.0012 \\
    $\hat \L_2$, $N=75$ & -0.0138 & -0.0076 & -0.0065 & -0.0065 & 0.0006\\\hline
    \end{tabular}
    \end{center}
    \vspace*{-0.5cm}
    \caption{ Comparison of relative bias $\E\big[\hat{\L}_d-\L_d\big] / \L_d$ for the HPE.}
    \label{tab:Smoothness}
\end{table}

\paragraph{HPE of the EEC} Moving on to the estimation of the EEC curve, the covariance function \eqref{eq:cov-chi-hat} of the HPE of the EEC curve $\widehat{\EEC}$ and $\bar{\chi}^{(100)}$ for a sample size of $N=100$ are shown in Figure \ref{fig:cov}. Taking the diagonal entries gives their variance functions, which are used to construct pointwise confidence bands.
Note that the HPE of the EEC has lower variance than the nonparametric sample average estimator, Figure \ref{fig:cov} (right panel).
\begin{figure}[H]
\begin{center}
  \includegraphics[trim=0 0 0 0,clip,height=1.7in]{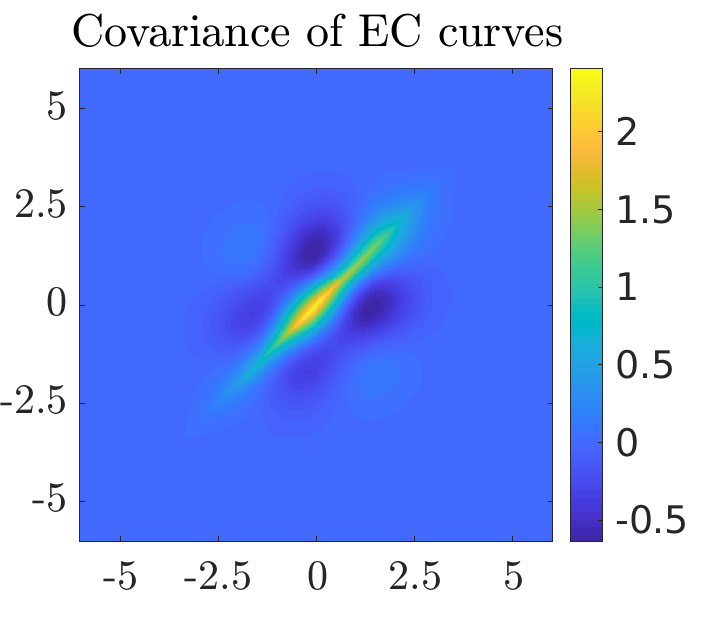}
  \includegraphics[trim=0 0 0 0,clip,height=1.7in]{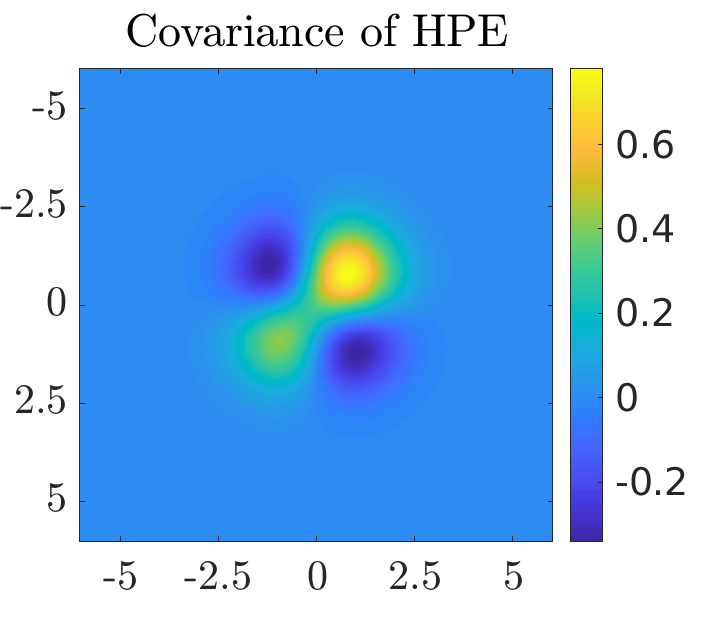}
  \includegraphics[trim=0 0 0 0,clip,height=1.7in]{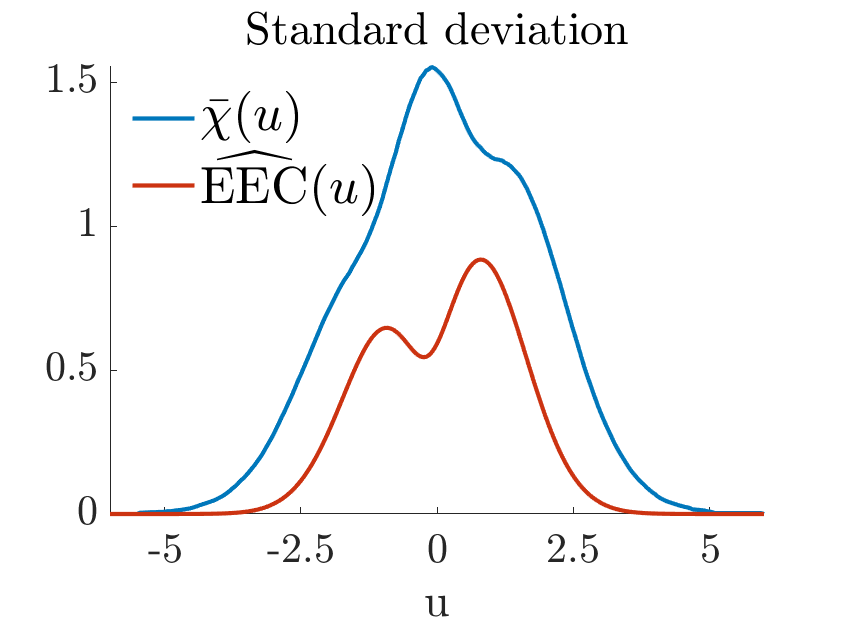}

\vspace*{-0.5cm}
\caption{Isotropic Gaussian field: Covariance functions of the sample average EC curve (left) and the HPE of the $\EEC$ (middle) from $1000$ samples. Right panel shows their standard deviation functions in red and blue, respectively.
\label{fig:cov}}
\end{center}
\end{figure}

\begin{figure}[H]
	\begin{center}
		\begin{tabular}{ccc}	
			 $N=10$ & $N=50$ & $N=100$ \\	
			\includegraphics[trim=0 0 0 0,clip,width=1.8in]{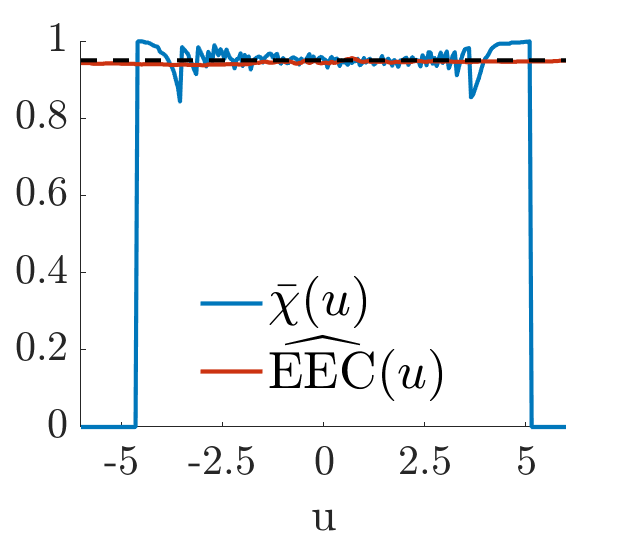} &
			\includegraphics[trim=0 0 0 0,clip,width=1.8in]{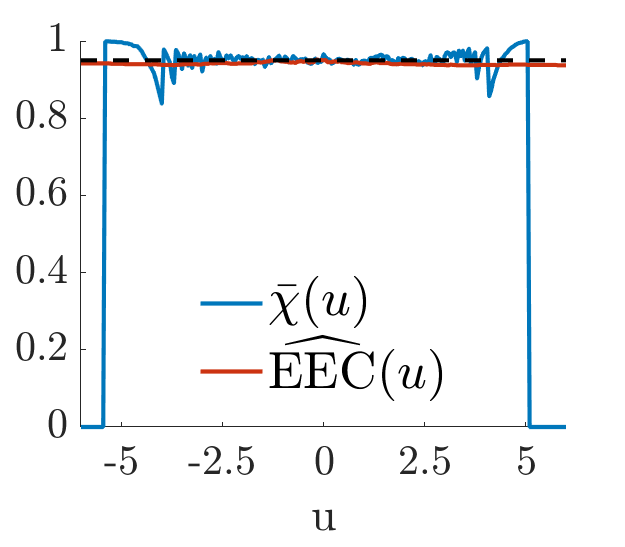} &
			\includegraphics[trim=0 0 0 0,clip,width=1.8in]{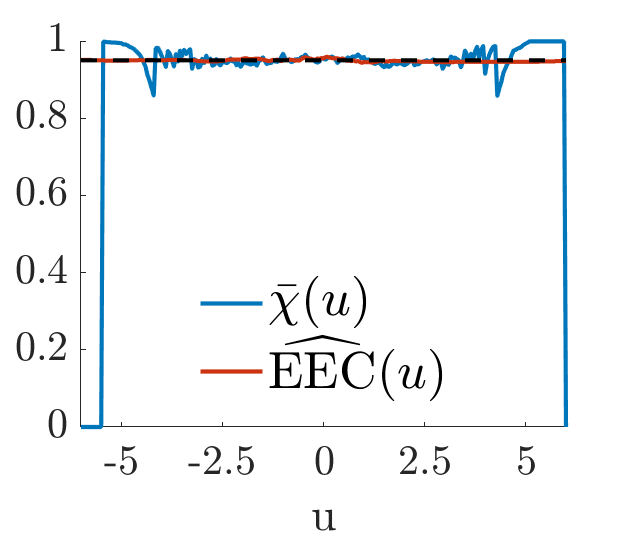} \\
			\includegraphics[trim=0 0 0 0,clip,width=1.8in]{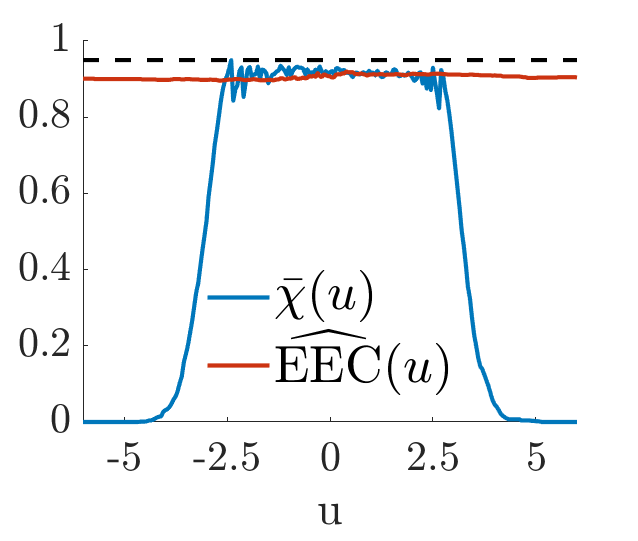} &
			\includegraphics[trim=0 0 0 0,clip,width=1.8in]{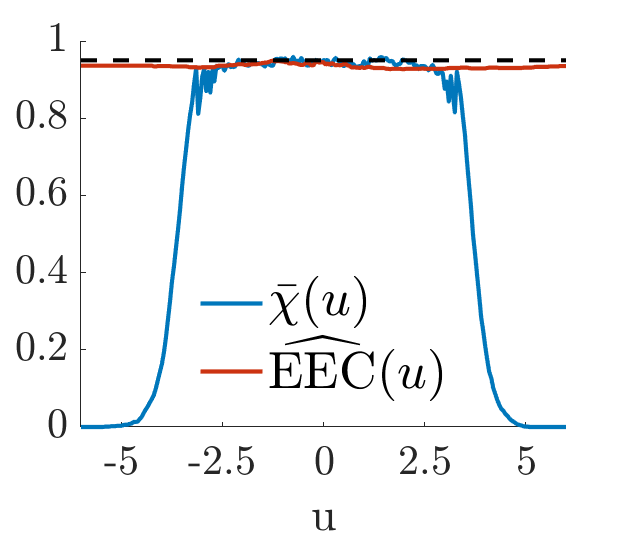} &
			\includegraphics[trim=0 0 0 0,clip,width=1.8in]{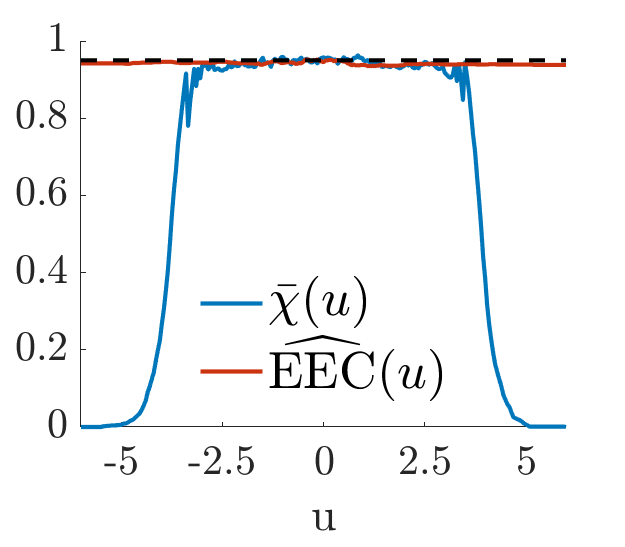} \\
		\end{tabular}
		
\vspace*{-0.3cm}
		\caption{Isotropic Gaussian field: pointwise coverage of the EEC curve under respectively ``true'' variance of the EC curves (top row) and estimated variance (bottom row) for different sample sizes. The dashed lines represent the target confidence level 95\%.  \label{fig:pointwise-coverage}}
	\end{center}
\end{figure}

\vspace*{-0.9cm}

Figure \ref{fig:pointwise-coverage} shows simulation results for the coverage of the true EEC curve when constructing pointwise $95\%$ confidence intervals via \eqref{eq:EEC-hat-conf} and \eqref{eq:ECavg-conf}. In the top row, the CIs use the ``true'' variance of the EC curves, estimated by Monte Carlo simulation for a large sample size of $10,000$, while in the bottom row, the CIs use the variance estimates corresponding to the given sample size. Obviously, the coverage function is smoother for the HPE (red) and it guarantees coverage for extreme values of $u$ especially when the variance is estimated from the data. The latter is not the case for the coverage of the CIs from the EC sample average, because large values of $u$ are seldomly observed.

\vspace{-0.5cm}
\paragraph{Non-stationary Gaussian field}
We simulate samples from the following field
\[
f(t, \gamma) =  \frac{1}{\pi^{1/4}\sqrt{\gamma}}\int e^{-\frac{(t-s)^2}{2\gamma^2}} W_n(ds),\quad \text{ for } (t,\gamma)\in [1,L]\times [\gamma_1, \gamma_2]
\]
where $t\in \R$, $\gamma\in (0, \infty)$ and the $W_n$'s are iid Wiener fields (white noise) on $\R$. This field is a unit-variance, smooth, non-stationary Gaussian random field, called {\it scale space field}. By \citet{Siegmund:1995}, its LKCs can be computed as
\begin{align*}
\L_1 &= \frac{L-1}{2\sqrt{2}}\left(\gamma_1^{-1}+\gamma_2^{-1}\right) + \frac{\log(\gamma_2/\gamma_1)}{\sqrt{2}}, ~~~
\L_2 = \frac{(L-1)}{2}\left(\gamma_1^{-1}-\gamma_2^{-1}\right).
\end{align*}

\vspace{-0.13cm}

\begin{figure}[H]
	\begin{center}
      \begin{turn}{90}
        \textbf{\hspace{0.5in}Theoretical}
        \end{turn}
	\includegraphics[trim=50 0 40 30,clip,width=5.5in]{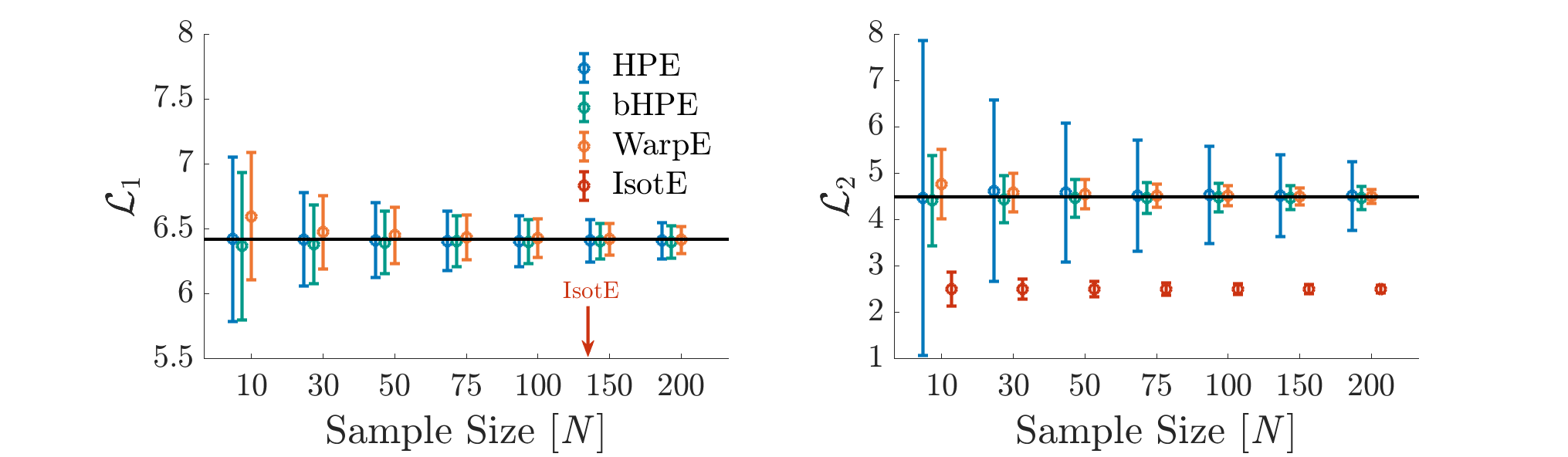}\\
        \begin{turn}{90}
        \textbf{\hspace{0.5in}Experimental}
        \end{turn}	
	\includegraphics[trim=50 0 40 30,clip,width=5.5in]{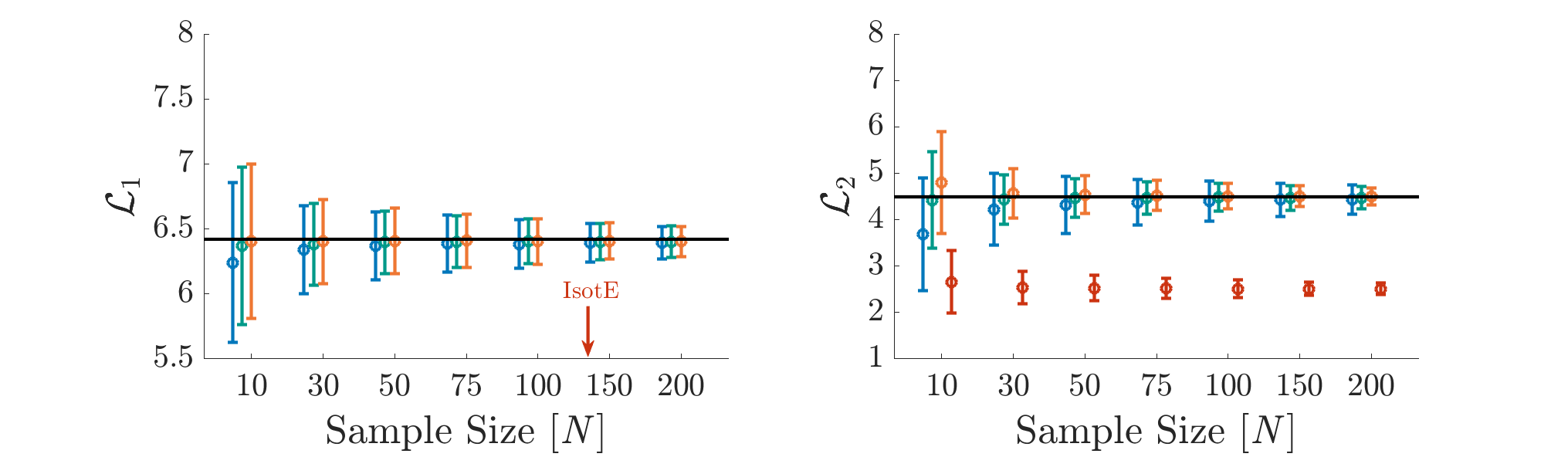}
	\caption{Scale space field: comparison of mean and standard deviation of different LKC estimators. Black lines represent the theoretical LKCs.  The values for the IsotE for $\L_1$ are not shown, since they are far off ($\hat\L_1 \approx 3.16$).
\label{fig:LKC-ScaleSpace}}
\end{center}
\end{figure}

In the simulations we again replaced the continuous field by a version derived from a discrete convolution using the parameters $L=50$, $\gamma_1=4$ and $\gamma_2=15$. This yields theoretical LKCs $\L_1 = 6.42$ and $\L_2 = 4.49$. Figures \ref{fig:LKC-ScaleSpace} through \ref{fig:ScaleSpace-pointwise-coverage} show the simulation results for the scale space following the same format as Figures \ref{fig:LKC-isotropic} through \ref{fig:pointwise-coverage}.
The results for this non-stationary field are similar to the isotropic case. In particular, the bHPE and the WarpE are best and perform similarly, except that the bHPE is unbiased. However, as expected the IsotE, which is the estimator currently used in neuroimaging, is unsuitable under strong nonstationarity.
\begin{figure}[H]
	\begin{center}
		\includegraphics[trim=0 0 0 0,clip,height=1.7in]{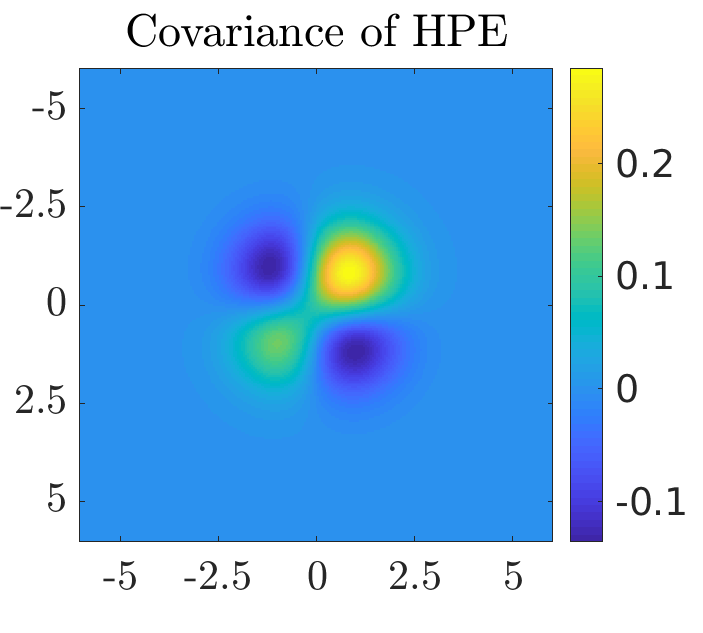}
		\includegraphics[trim=0 0 0 0,clip,height=1.7in]{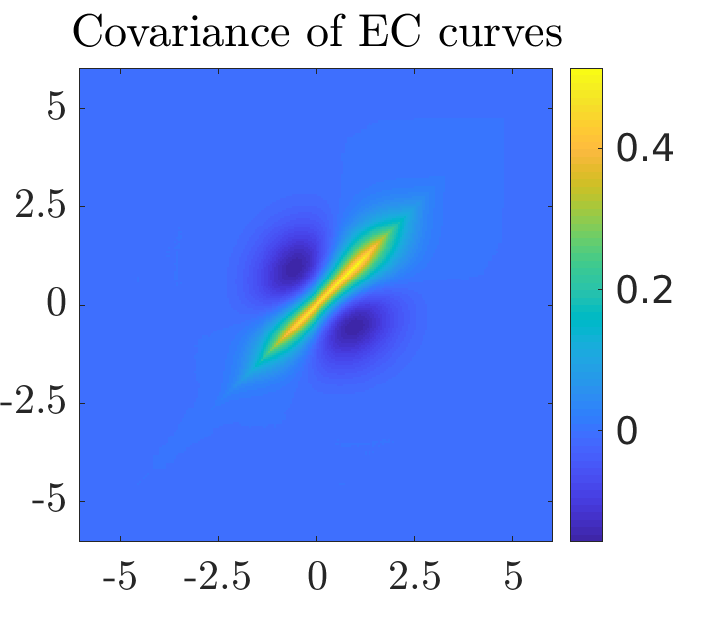}
		\includegraphics[trim=0 0 0 0,clip,height=1.7in]{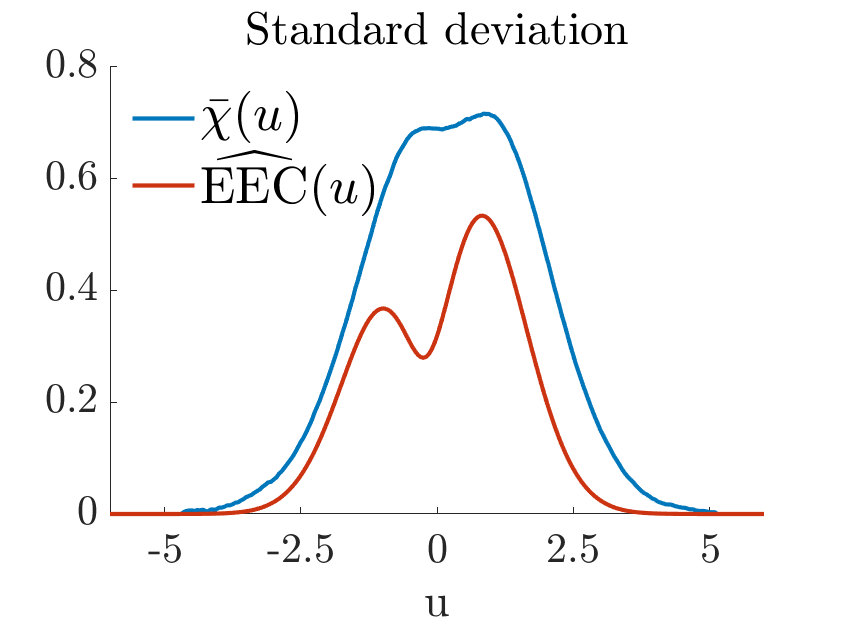}
			\vspace*{-0.5cm}
		\caption{Scale space: covariance functions of the sample average EC curve (left) and the HPE of the $\EEC$ (middle) from $1000$ samples. Right panel shows their standard deviation functions in red and blue, respectively.
			\label{fig:ScaleSpace_cov}}
	\end{center}
\end{figure}

\begin{figure}[H]
	\begin{center}
		\begin{tabular}{ccc}	
			$N=10$ & $N=50$ & $N=100$ \\	
			\includegraphics[trim=0 0 0 0,clip,width=1.8in]{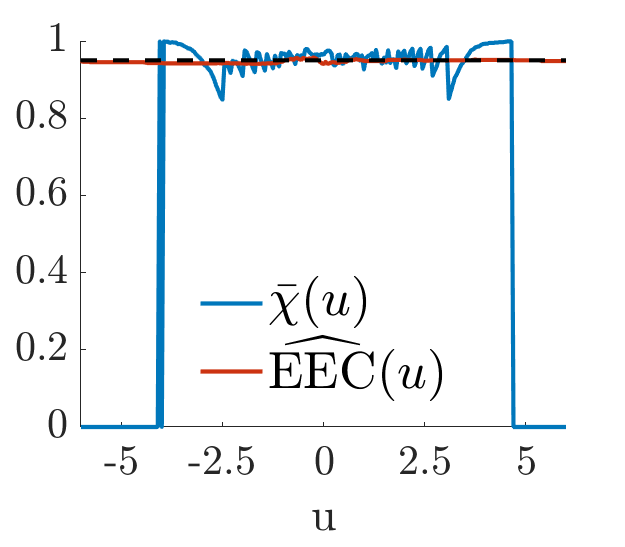} &
			\includegraphics[trim=0 0 0 0,clip,width=1.8in]{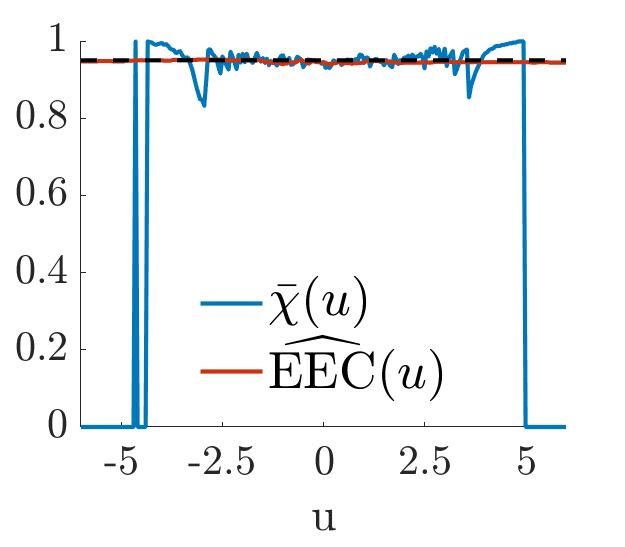} &
			\includegraphics[trim=0 0 0 0,clip,width=1.8in]{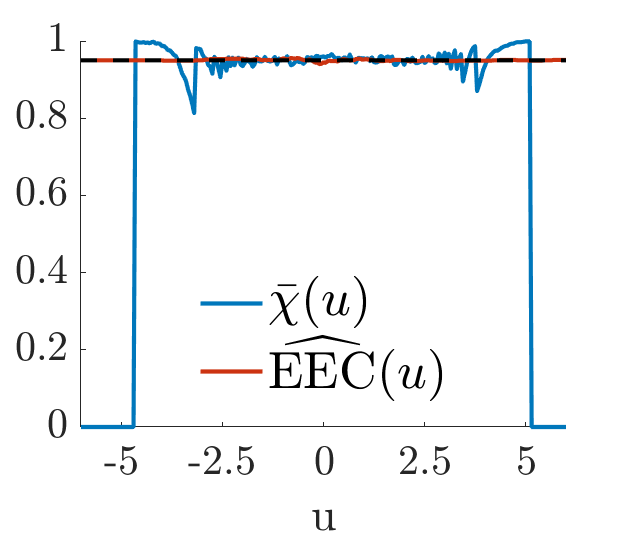} \\
			\includegraphics[trim=0 0 0 0,clip,width=1.8in]{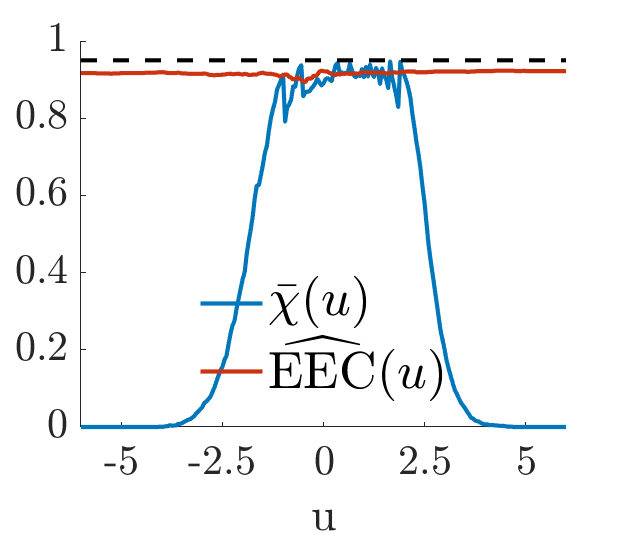} &
			\includegraphics[trim=0 0 0 0,clip,width=1.8in]{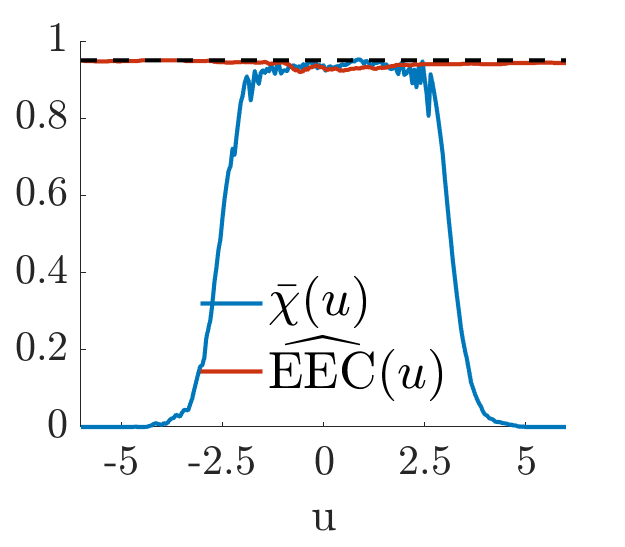} &
			\includegraphics[trim=0 0 0 0,clip,width=1.8in]{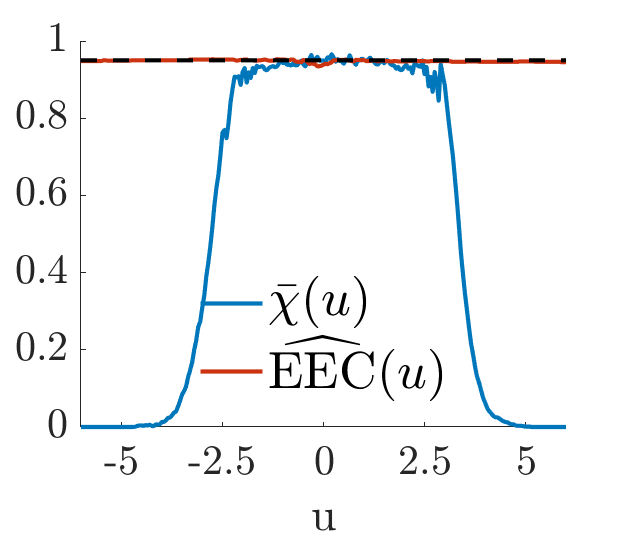} \\
		\end{tabular}				
		\caption{Scale space:  pointwise coverage of the EEC curve under respectively ``true'' variance of the EC curves (top row) and estimated variance (bottom row) for different sample sizes. The dashed lines represent the target confidence level 95\%.  \label{fig:ScaleSpace-pointwise-coverage}}
	\end{center}
\end{figure}

\section{Cosmic Microwave Background Radiation}\label{sec:data}

In cosmology, the EEC plays an important role in summarizing the topological structure of the universe \citep{GDM86,HSKT02,PEW16,Pranav:2019}. In particular, the cosmic microwave background (CMB) radiation field gives a glimpse into the early structure of the universe, a short time after the big bang \citep{PlanckXXIII}. The goal is to compare the EC of the observed CMB map to the EEC of simulated Gaussian fields with the same spatial autocorrelation function, helping assess how close the theoretical 
\begin{figure}[H]
	\begin{center}
			\includegraphics[trim=0 0 0 0,clip,width=3in]{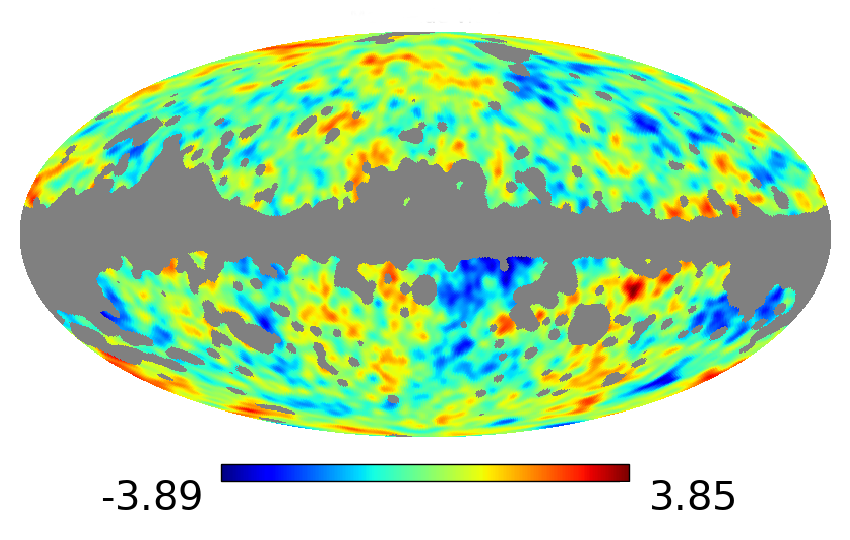}\\
			\includegraphics[trim=40 0 40 0,clip,width=2.6in]{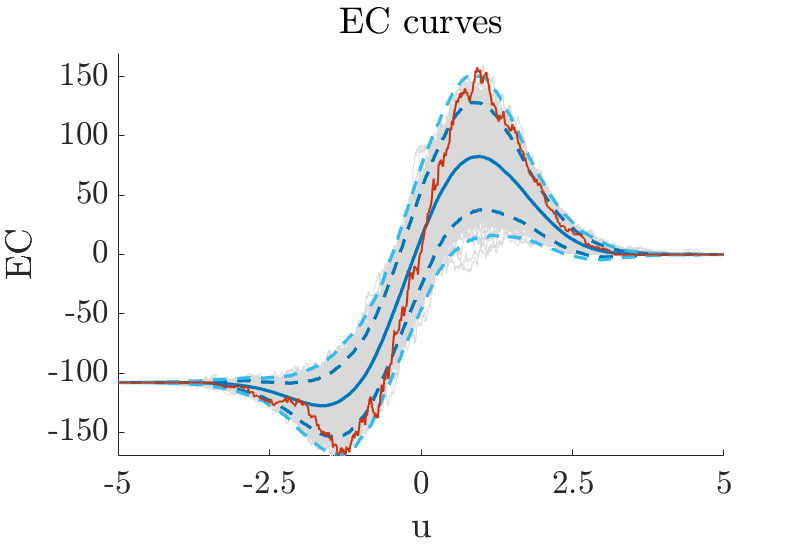} 
			\includegraphics[trim=40 0 40 0,clip,width=2.6in]{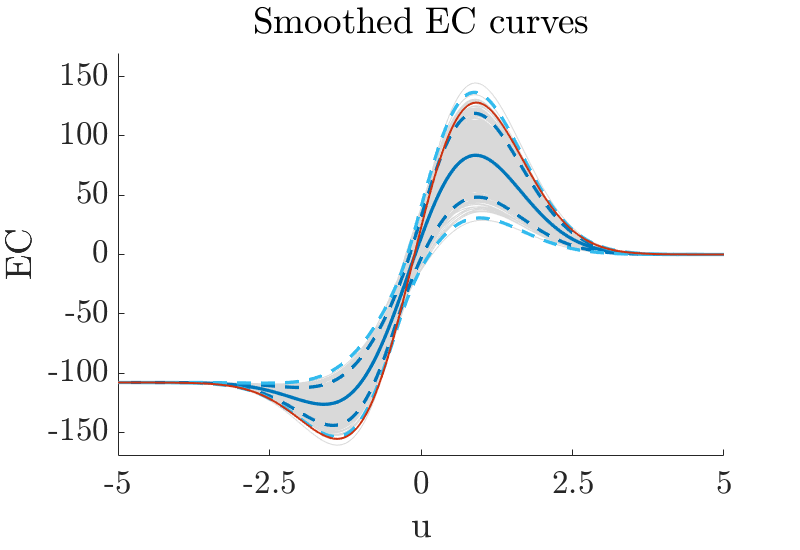}		
	\end{center}
	
	\vspace*{-1.1cm}
	\caption{ \label{fig:CMB-EEC} CMB example: (top) cleaned and smoothed observed CMB field. (left/right) EC curves, raw/ smoothed, simulated CMB fields (gray) and nonparametric EEC estimate/HPE, with pointwise 2/3-$\sigma$-predicition bands (blue/light blue). The EC curve corresponding to the Planck data is shown in red.}
\end{figure}
\noindent model is to the observed data. For this comparison, we use $N=1000$ iid instances of Full Focal Plane 8 (FFP8) simulations, publicly released by the Planck team \citep{Planck:2016-ffp8}. These simulations are designed to replicate the observed CMB sky and are based on an isotropic Gaussian random field prescription. To mimic the observation field, in our computer experiments, the CMB maps were contaminated by noise from various sources and then cleaned using the same process as the actual observed data from Planck \citep{Planck:2016-NILC}, including smoothing with an isotropic Gaussian kernel with a bandwidth of 180 arcmin. Additionally, because of contamination by the Milky Way and other large foreground light sources, portions of the sky were masked using the most conservative UT78 mask released by the Planck team \citep{PlanckXXIII}. Figure \ref{fig:CMB-EEC} shows the observed CMB field rendered over a HEALPix spherical grid with 2048 nodes \citep{HEALPix:2005}.

The simulated fields and the observed field were thresholded at a sequence of thresholds ranging from -5 to 5 in steps of 0.01 and the EC of the resulting excursion sets were computed. Figure \ref{fig:CMB-EEC} shows the observed EC curves and their smoothed versions, together with the nonparametric estimate and the HPE of the EEC. We decided to use the HPE instead of the bHPE here, since the simulated data is in fact Gaussian and we can provide estimates of the standard errors (the bHPE is used in the fMRI data analysis). The estimated LKCs from the 1000 FFP8 simulations were $\hat{\L}_1 = 426.8 \pm 54.2$ and $\hat{\L}_2 = 1528.9 \pm 346.4$, yielding respective standard errors of $1.7$ and $11.0$ for the mean LKCs. In contrast, the estimated LKCs from the actual observed field were $\hat{\L}_1^o = 480.3$ and $\hat{\L}_2^o = 2545.0$.

These results have two uses in astronomy. First, the EEC could be used to detect celestial objects against the CMB background \citep{peakDet-Sphere:2019}. In this case, we obtain $u_{\FWER} = 4.279 \pm 0.002$ and $u_{\CER} = 3.456 \pm 0.002$.
Second, in Figure \ref{fig:CMB-EEC}, the observed CMB EC curve (red) is at the edge of the distribution of simulated curves. In fact, the observed $\hat{\L}_2^o$ is 2.93 standard deviations away from the corresponding estimated mean value $\hat{\L}_2$ from the simulations. This may be evidence that the observed CMB field does not match the physical model that generated the simulations and improvements to the physical model may be necessary \citep{PEW16,Pranav:2019}.

\section{Discussion}

The main advantage of the HPE/bHPE is the ability to handle non-stationary and non-Gaussian random fields in a conceptually and computationally simple way. Moreover, the fields are allowed to be defined over non-trivial domains, like subdomains of the sphere or the highly curved cortical surface of the brain, provided that there is an algorithm to compute the EC of excursion sets. In fact, general and efficient algorithms to compute EC curves exist and are a recent topic in computer science \citep{Heiss:2017}.

While the WarpE performs well in our simulations and is a more direct, geometric approach, the bHPE is simpler and equally effective. In particular, its reliance on EC curves rather than approximations of numerical derivatives makes it computationally favorable, since the WarpE can be prone to numerical inaccuracies. Noteworthy is that the bHPE has a similar variance to the WarpE in our simulations, and in contrast to the latter, seems to be unbiased even in the experimental scenario. Observing similar variances is not surprising, since both estimators compute the LKCs of the Gaussian field having the empirical correlation of the data. Hence uncertainty in these estimates is determined by the variability in the empirical correlation. It is not evident why the uncertainty in the HPE, which instead is determined by the variability of the EC curves, is higher.

A further advantage of the HPE/bHPE is that we were able to study theoretical properties as unbiasedness, finite variance, consistency and derive CLTs and confidence bands for the EEC, which was only partially feasible for the WarpE, see \citet{Taylor:2007}.

\section*{Acknowledgments}
A.S., D.C. and F.T. were partially supported by NIH grant R01EB026859. F.T. thanks the WIAS Berlin, where parts of the research for this article was performed, for providing a guest researcher status and their hospitality. P.P. acknowledges the support of ERC advanced grant 740021- Advances in Research on THeories of the dark UniverSe (ARTHUS).

\bigskip
\begin{center}
{\large\bf SUPPLEMENTARY MATERIAL}
\end{center}

\begin{description}
\item[Proofs:] All proofs of the theorems in the main manuscript.
\item[Appendix:] Additional material on the connection of the HPE to LKC regression, efficient computation of EC curves, LKC estimation for general linear models and a data analysis application to fMRI data.
\item[Matlab Toolbox for Estimation:] \url{https://github.com/ftelschow/HermiteProjector}  Matlab Toolbox implementing the proposed estimators and allowing to reproduce the simulations and data analysis.
\end{description}

\bibliographystyle{abbrvnat}

\bibliography{references.bib}
\newpage

\appendix
  
\section{Appendix}
\subsection{HPE and LKC regression}\label{appendix:lin-reg-view}
\cite{EC-regression2017} suggested to perform a linear regression of the average field on the EC-densities based on the GKF \eqref{eq:EEC} to estimate the LKCs. Suppose $\bar{y}(u) = \bar{\chi}^{(N)}(u) -  \L_0 \Phi^+(u)$ is the pinned average of the observed EC curves of an iid sample $f_1,...,f_N\sim f$. Then

\begin{equation}\label{eq:adlerRegression}
\bar y(u) := \sum_{d=1}^D \L_{d} \rho_d(u) + \varepsilon(u)\,,
\end{equation}
where $\varepsilon(u)$ is some mean zero error field with covariance structure $(u,v)\mapsto\Cov\big[ \chi_f(u),\chi_f(v) \big]$.

Following that formulation, $\bar{y}(u)$ is observed at a discrete set of $L$ levels $u_1,\ldots,u_L$ to get the response vector $\bar{\mathbf y} = (\bar{y}(u_1),\ldots,\bar{y}(u_L))^{\tt T}$. Similarly, the columns of the design matrix $X$ are the EC densities sampled at the same levels so that
$X_{dl} = \rho_{d}(u_l)$. The linear regression estimator of the vector of LKCs $\L$ is
\begin{equation}
\label{eq:OLS}
\hat{\L}_{\rm LR} = (X^{\tt T} X)^{-1} X^{\tt T} \bar{\mathbf y}.
\end{equation}

The HPE can be interpreted as a continuous version of the linear regression estimator. First, notice that the linear regression estimate $\hat{\L}_{\rm LR}$, by definition, is the vector $\beta = (\beta_1,\ldots,\beta_D)^{\tt T}$ that minimizes the sum of squares
$\sum_{l=1}^L \left[y_l - \vec{x}_l \beta\right]^2$,
where $\vec{x}_l$ is the $l$-th row of the matrix $X$. Respecting the functional form of the EC process, a continuous version of the above sum of squares is the integral
$\int_{-\infty}^\infty \left[\bar{y}(u) - \vec{x}(u) \beta\right]^2 w(u)\,du$
where $\vec{x}(u) = (\rho_1(u),\ldots,\rho_D(u))$ and $w(u)$ is a suitable weight function. Differentiating with respect to $\beta$ and setting to zero yields that the minimizer is
\begin{equation}
\label{eq:beta-hat}
\hat{\beta} = \left[\int_{-\infty}^\infty \vec{x}(u)^{\tt T} \vec{x}(u) w(u) \,du\right]^{-1}
\left[\int_{-\infty}^\infty \vec{x}(u)^{\tt T} \bar{y}(u)\, w(u) \,du\right].
\end{equation}
This solution can be greatly simplified by judiciously choosing $w(u) = e^{u^2/2}$. In this case, the $(d,d')$ entries of the $D\times D$ matrix on the left of \eqref{eq:beta-hat} are precisely given by \eqref{eq:orthogonality-rho}.
This matrix is diagonal and immediately invertible yielding that each of the entries of \eqref{eq:beta-hat} is the same as the HPE \eqref{eq:L-hat-equiv}. In other words, the HPE \eqref{eq:L-hat-equiv} can be seen as a LKC regression estimator in the sense of \cite{EC-regression2017} with weight function $e^{u^2/2}$, so that high and low levels $u$ are weighted more heavily than levels $u$ near zero. This may be seen as an advantage in practice because, if the estimated EEC curve is to be used for inference, it is more important to have a better fit at high and low thresholds.

\subsection{Efficient computation of EC curve using critical values}\label{appendix:ECcomputation}
Computationally, most challenging for implementing the HPE and the bHPE is the computation of the EC curves, which, if not judiciously implemented can be computationally inefficient. We present a simple algorithm computing the piecewise constant EC curve from Theorem \ref{thm:CritValRepresentation} exactly for a discretely observed field $f$.

We use two key facts. First, due to Morse theory it is well known, that the topology of the level sets of $f$ change if and only if the level $u$ is a critical value of $f$. Secondly, the Euler characteritic is local meaning that the change in the Euler characteristic of the set $f^{-1}\big((u-\epsilon, u]\big)$ compared to the set $f^{-1}\big((u-\epsilon, u]\big)\backslash f^{-1}\big(\{u\}\big)$ for a critical value $u$ and small enough $\epsilon$ is equal to the change between the ECs $\chi_f(u-\epsilon)$ and $\chi_f(u)$ of the corresponding level sets.

Computationally, these facts are used as follows. For simplicity of presentation, we assume that the field is observed on a $2$ dimensional discrete square grid with $L$ points in each direction. Arbitrary dimensions are completely analoguosly treated and more complex domains $S$ than a square can always be incorporated by fitting it into a rectangular domain $S\subset D$ and define $f(s)=-\infty$, if $s\in D\backslash S$. Assume that $F\in\R^{ (L+2)\times (L+2)}$ is the array with entries $F_{ij}=f(i,j)$ for $i,j\notin \{1, L+2 \}$ and $f(i,j)=-\infty$ else. The padding of $-\infty$ simplifies the implementation. Further, we assume that we have an algorithm computing the Euler characteristic of a binary $3\times 3$ array, let us denote it as a function $E_3:\{0,1\}^{3\times 3}\rightarrow \mathbb{Z}$. This can be fastly implemented and there are different choices depending on the chosen definition of the EC for a discrete grid. In our implementation we assume that the observation points are the vertices and use $4$-connectivity, i.e., only vertical and horizontal vertices are connected. Equivalently, $8$-connectivity is possible where we also connect diagonal vertices. For $F_{ij}$ we define its local neighbourhood by the array $N^{(ij)}\in\R^{ 3\times 3}$ consisting of  $F_{ij}$ and its $8$ neighbours.

The following construction describes the implementation.
\begin{enumerate}
	\item Using a $for$-loop compute for all $(i,j)$ such that $i,j\notin \{1, L+2 \}$ the local topology change at $(i,j)$, i.e., $\Delta_{i,j} = E_3\big( N^{(ij)} \geq F_{ij} \big)- E_3\big( N^{(ij)} > F_{ij} \big)$ and save the tupel $(F_{ij},\Delta_{i,j})$. Denote the set of these tupels by $\mathcal{T}$, which contains all tupels of critical values $F_{ij}$ and their topology change.
	\item The EC curve can be recovered from $\mathcal{T}$ as follows. Sort all tupels in $\mathcal{T}$ ascending in the critical values. We call the sorted values $\big(u_{(0)}, \Delta_{(0)}\big),..., \big(u_{(M)}, \Delta_{(M)}\big)$, where $M$ is the number of critical values. The step function representation \eqref{eq:EC-crit-val-repr} of the EC curve is then obtained by
	\begin{equation}
		\chi_f(u) = \L_0\mathds{1}_{(-\infty,u_{(0)}]}(u) + \sum_{m=1}^M a_m \mathds{1}_{(u_{(m-1)},u_{(m)}]}(u)\,,
	\end{equation}
	where $a_m =  \L_0 + \sum_{k=0}^{m-1} \Delta_{(k)}$.
\end{enumerate}

\subsection{EEC estimation for pointwise linear models}
\label{sec:linear-models}

For fMRI data in particular \citep{Worsley:1996,Worsley:2004,Nichols:2012}, but also in other settings \citep{Sommerfeld:2018}, it is customary to set up a linear regression model relating the observed random fields at each location $s$. Following what is usually called the general linear model approach, the $N\times 1$ vector $\Y(s)$ of observed intensities at each location $s$ is modeled as
\begin{equation}\label{eq:linear-model}
\Y(s) = \X \bbeta(s) +  \bepsilon(s),
\end{equation}
where the $P\times 1$ vector $\bbeta(s)$ contains $P$ regression coefficients, $\X$ is a constant $N\times P$ design matrix independent of $s$, and $\bepsilon(s)$ is a $N\times 1$ random vector whose entries are assumed to be iid with zero mean and some variance $\sigma^2(s)$. In a task-related fMRI experiment, $\Y(s)$ represents the observed fMRI images at $N$ time points and $\X$ contains the onset of the various tasks conditions in addition to other covariates, so that the function $\bbeta(s)$ represents the extent to which the observed signal is related to the task at each location $s$. The goal of the inference is to estimate a specific contrast $\eta(s) = \mathbf{c}'\bbeta$, for a fixed vector $\mathbf{c}$, typically comparing task conditions, and idenify for example areas in the brain, where the contrast is different from zero. The detection of such regions of activation is often done by thresholding a standardized estimate of $\eta(s)$, so that the FWER across the brain is controlled.

Specifically, let $\hat{\bbeta}(s) = (\X'\X)^{-1}\X'\Y(s)$ be the usual pointwise least-squares estimate of the coefficient vector $\bbeta(s)$ at each location $s$, with covariance matrix $\Cov\big[\hat{\bbeta}(s)\big] = \sigma^2(s) (\X'\X)^{-1}$. The estimate of $\eta(s)$ is $\hat{\eta}(s) = \mathbf{c}'\hat{\bbeta}(s)$, with variance $\Var\left[\hat{\eta}(s)\right] = \sigma^2(s) \mathbf{c}'(\X'\X)^{-1}\mathbf{c}$. Inference at each location $s$, for example to test the null hypothesis $H_{0}: \ \eta(s)=0$ at each location $s$, may be based on the standardized z-score field
\begin{equation}\label{eq:z-score}
z(s) = \frac{\hat{\eta}(s)}{\sqrt{\Var\left[\hat{\eta}(s)\right]}}.
\end{equation}
Let $\rho(s,s') = \E[\epsilon_n(s),\epsilon_n(s')]$, $s,s'\in S$, be the correlation function of the noise field. A simple calculation shows that the random field $z(s)$ has constant variance 1 and covariance function equal to $\rho(s,s')$. Under the complete null hypothesis that $H_{0}: \ \eta(s)=0$ for all $s\in S$, the random field $z(s)$ also has zero mean and the detection threshold $u_\alpha$ can be obtained using the EEC heuristic as explained in the previous section.

If the error field $\epsilon$ is smooth and the design matrix is appropriately bounded, then the z-score field \eqref{eq:z-score} satisfies a fCLT, with a smooth Gaussian limiting field satisfying the GKF \eqref{eq:EEC} and the LKCs being the same as those of the error field $\bepsilon$  \citep{Sommerfeld:2018}.

Ideally, estimation of these LKCs using the HPE or bHPE would be based on $N$ iid realizations of the error field. Because these are not available, we construct standardized residuals as discussed in Section \ref{sec:bHPEdef}. We define normalized residuals $\tilde{\e}(s) = \e(s)/\sigma(s)$, where $\e(s) = \Y(s) - \X \hat{\bbeta}(s)$.
These fields are smooth and have mean zero and covariance function $\E[\tilde{\e}(s)\tilde{\e}'(t)] = [\I - \X(\X'\X)^{-1}\X'] \rho(s,t)$. While the fields given by the components are not independent, classical results in linear regression (e.g. \citep{Eicker:1963}) show that asymptotic properties still hold, if the sample size $N$ is replaced by the number of degrees of freedom $N-P$. For large $N$, the variance estimate $\hat{\sigma}^2(s) = \|\e(s)\|^2/(N-P)$ should be close to its true value. Therefore, and since the normalized residuals are not observable, we base our estimates on the standardized residual fields, i.e. the entries of $\mathbf{R}(s) = \e(s)/\|\e(s)\|$, which satisfy \textbf{(R1)} and under appropriate assumptions also \textbf{(R2)}. Note again, that even if the error field $\bepsilon$ is Gaussian, the standardized residuals $\hat{\e}$ are not. For this reason, the bHPE from Section \ref{sec:LKC-estim-asym} performs better than the HPE estimator from Section \ref{sec:observed-LKC}, as we will demonstrate in our simulations.

\subsection{fMRI data analysis: Nonstationary 3D random field}
\label{sec:fmri}

In fMRI analysis, the use of EEC for controlling FWER has been mostly restricted to calculations based on stationarity of the error field. In this example, we show how the methods proposed in this paper can be used to obtain the significance threshold without assuming stationarity and taking into account the variability in the EEC estimation.

The fMRI data \citep{Moran:2012}, obtained from the public repository OpenfMRI ({\tt openfmri.org}), is the same analyzed in \cite{peakDetRF:2017}, allowing us to compare the results. The dataset involves a ``false belief task" experiment, where subjects read short stories concerning a person's false belief about reality or stories about false realities not involving people. The goal is to find brain regions that show a contrast in neural activity between these two situations, and can thus be attributed to processing  other people's false beliefs about reality.

As in \cite{peakDetRF:2017}, we focus here on the data from subject \# 49. The data $\Y(s)$ consists of a sequence of $N = 179$ fMRI images of size $71\times 72 \times 36$ voxels, after motion correction, spatial registration and removal of the first row (missing data). The design matrix $\X$ contains 4 columns encoding the presentation of the stimuli as 0-1 step functions, in addition to a column for the intercept. The vector $\mathbf{c}$ encodes the contrast of interest between the two types of stimuli. Following the analysis described in Section \ref{sec:linear-models}, we computed the regression residual fields $\e_i(s)$, which we smoothed with a Gaussian kernel with standard deviation of 1.6 voxels and the corresponding z-score field $z(s)$ from equation \eqref{eq:z-score}.

\begin{figure}[H]
	\begin{center}
		\begin{tabular}{cc}
			\includegraphics[trim=0 2 0 0,clip,width=3in]{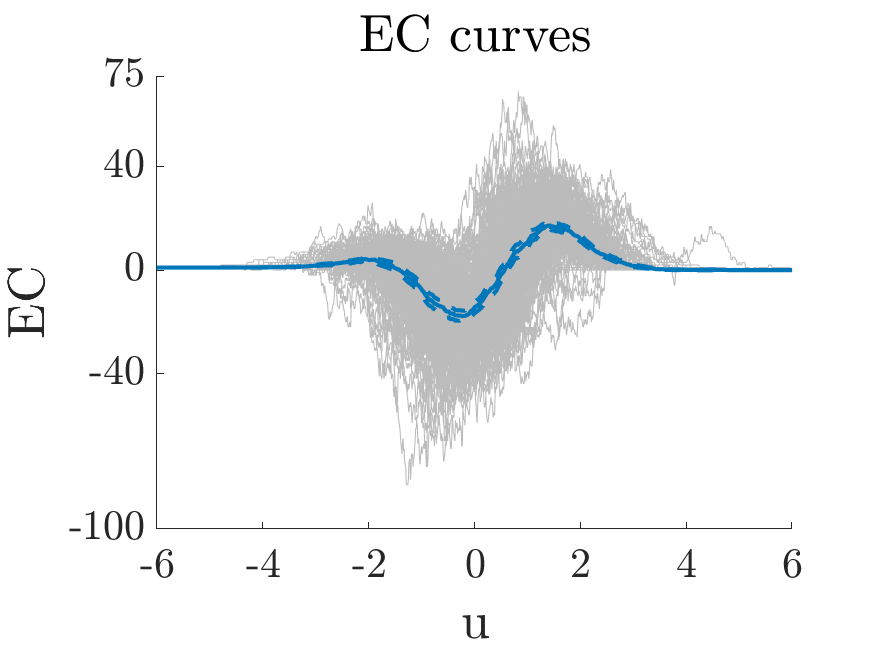} 
			&\includegraphics[trim=0 2 0 0,clip,width=3in]{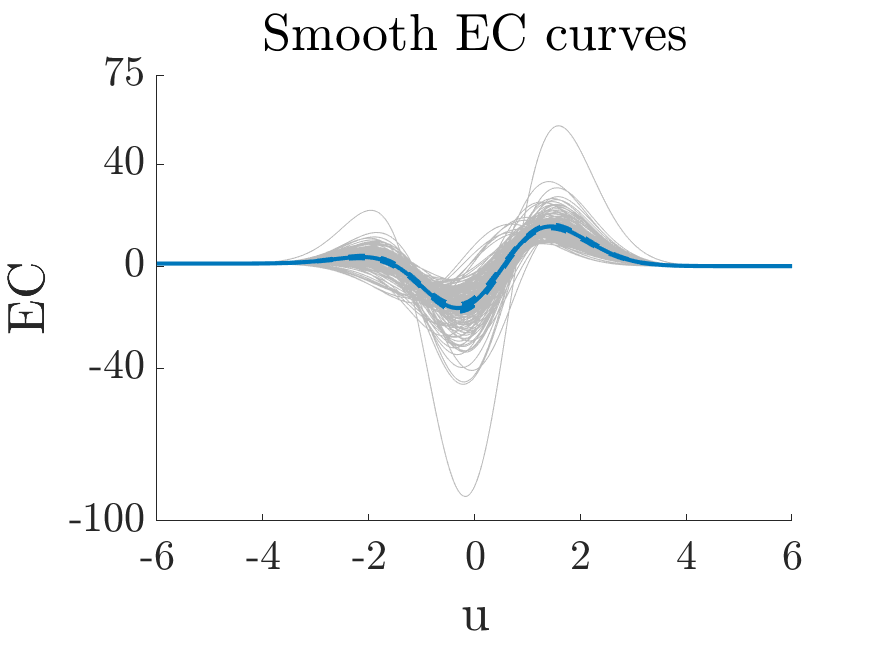}		
		\end{tabular}
	\end{center}
	\caption{ \label{fig:fMRI-EEC} fMRI example: Observed EC curves and estimates with pointwise confidence bands. (left) nonparametric EEC estimate. (right) HPE of EEC.}
\end{figure}

To compute EC curves, the spatial domain $S$ was defined as a brain mask composed of all voxels with raw fMRI activity greater than $500$. Figure \ref{fig:fMRI-EEC} shows the observed and smoothed EC curves for the $N=179$ re-normalized residual fields restricted to the brain mask $S$, together with the nonparametric and smooth EEC estimates. As expected, the smooth EEC estimate has tighter confidence bands. To perform statistical inference, we proceed as in Section \ref{sec:threshold}. The results are summarized in Table \ref{table:DataResults}. Here $u_\FWER$ and $u_\CER$ are the detection thresholds corresponding to $\alpha=0.05$ and $\alpha=1$, respectively. Note also that we only provide standard errors for the LKCs estimates of the HPE, which were obtained as the square root of the diagonal entries of $\hat{\bSigma}/N$, as described in Section \ref{sec:repeated-obs}. The LKC estimates for bHPE and HPE are fairly close suggesting that deviation from Gaussianity is not strong. However, the estimates from the software package SPM12 \cite{}, which implements IsotE, are substantially different due to non-isotropicity and non-stationarity. Interestingly, in this experiment, this does not affect the detection threshold substantially.

\begin{table}[H]\scriptsize
    \begin{center}
    \begin{tabular}{|c|ccc|cc|cc|}
        \hline
                  &       $\hat{\L}_1$ &       $\hat{\L}_2$  & $\hat{\L}_3$   &  $u_{\FWER}$    &   $\chi(z>u_{\FWER})$ &  $u_{\CER}$    &   $\chi(z>u_{\CER})$\\\hline
            HPE   & $13.5 \pm 1.2$    & $261.3 \pm 7.0$  & $650.7 \pm 27.0$ & $4.21\pm0.01$ & $5$ & $3.28\pm0.01$ & $22$ \\
            bHPE  & $13.2$ & $266.7$  & $670.1$ & $4.21$ & $5$ & $3.29$ & $21$\\
            SPM12 & $35.8$ & $315.3$  & $669.0$ & $4.23$ & $5$ & $3.31$ & $20$\\\hline
    \end{tabular}
    \end{center}
    \caption{ Comparison of LKC estimates, corresponding thresholds obtained from the GKF together with EC of excursion sets from the Moran data for HPE, bHPE and SPM12 (IsotE).}
    \label{table:DataResults}
\end{table}

\begin{figure}[H]
\begin{center}
	(a)\includegraphics[width=0.40\textwidth]{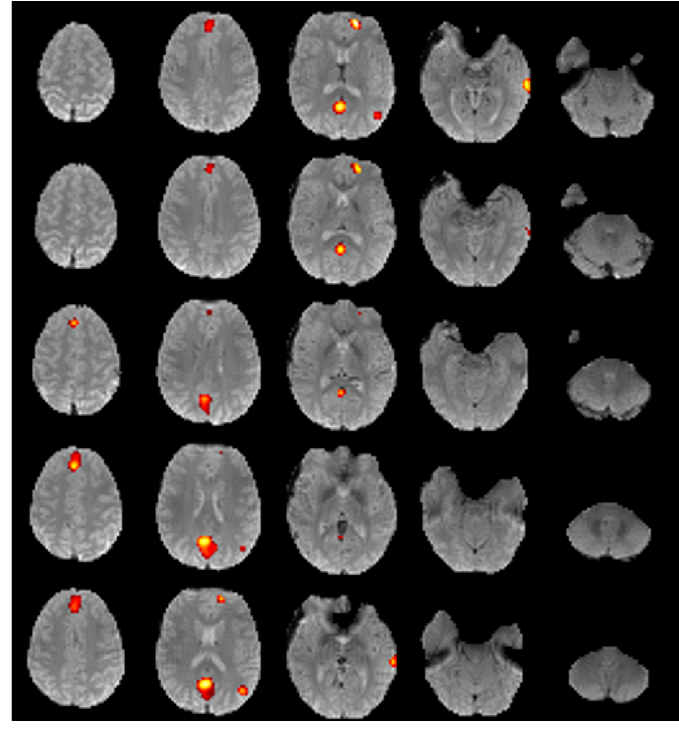}\hspace{0.5cm}
	(b)\includegraphics[width=0.40\textwidth]{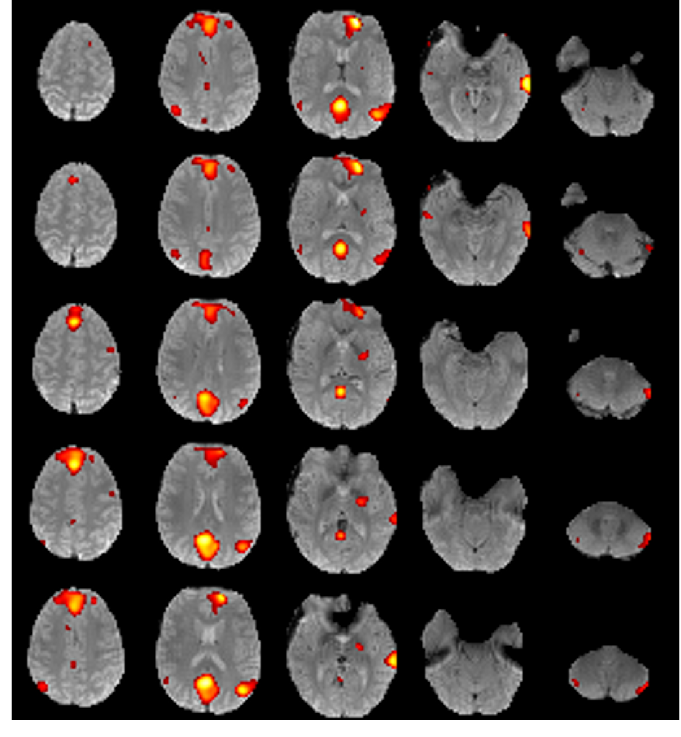}
\end{center}
	\caption{ \label{fig:fMRI-activation} fMRI activation at a significance level of FWER=0.05 (left) and CER=1 (right), using the residuals within the brain volume to estimate the EEC. Montage shows the brain volume as transverse slices from the top of the brain (top left panel) to the bottom of the brain (bottom right panel). Colored regions indicate the smoothed Wald statistic field above the threshold. Results are superimposed on an anatomical brain image (gray) for reference. }
\end{figure}

Figure \ref{fig:fMRI-activation} shows the activation maps for the thresholds from the bHPE. The FWER map is conservative; the excursion set has EC $\chi(u_{\FWER})=5$ with no apparent holes, equivalent to 5 connected components, which show as 5 hotspots in the activation map. In comparison, the CER map is less conservative; the excursion set has EC $\chi(u_{\CER})=22$, again with no apparent holes, equivalent to 22 connected components. The interpretation is that we expect only one connected component to be false.

\section{Proofs}\label{appendix:proofs}
      \subsection{Proof of Theorem \ref{thm:CritValRepresentation}}
            Note that $u_0=\min_{s\in S} f(s)$ and $u_M=\max_{s\in S} f(s)$, then we obtain
            \begin{align}\label{eq:hatL-v1}
            \begin{aligned}
                \tfrac{(d-1)!}{(2\pi)^{d/2}}\hat\L_d &=\int_{-\infty}^\infty \!H_{d-1}(u)\left( \chi_f(u)-\L_0\Phi^+(u) \right)\,du\\ &= \int_{u_0}^{u_M}\! H_{d-1}(u)\chi_f(u)\,du - \L_0\int_{u_0}^{\infty}\! H_{d-1}(u)\Phi^+(u)\,du  \\
                                        &~ ~ ~+ \L_0\int_{-\infty}^{u_0}\! H_{d-1}(u)\left(1-\Phi^+(u)\right)\,du\,.
            \end{aligned}
            \end{align}
            Here we used that $\chi_f(u)=0$, if $u>u_M$ and $\chi_f(u)=\L_0$, if $u<u_0$.

            Using integration by parts and $H_d'(u)=d\cdot H_{d-1}(u)$ yields for the later two integrals
            \begin{align*}
                \int_{u_0}^{\infty}\! H_{d-1}(u)\Phi^+(u)\,du &= -\Phi^+(u_0)\tfrac{H_{d}(u_0)}{d} - \int_{u_0}^{\infty}\! \tfrac{H_{d}(u)}{d} {\Phi^+} '(u)\,du \\
                \int_{-\infty}^{u_0}\! H_{d-1}(u)\left(1-\Phi^+(u)\right)\,du &= \big(1-\Phi^+(u_0)\big) \tfrac{H_{d}(u_0)}{d} + \int_{-\infty}^{u_0}\! \tfrac{H_{d}(u)}{d}{\Phi^+}'(u)\,du
            \end{align*}
            Using the definition of Hermite polynomials $H_d(x) = (-1)^de^{\tfrac{x^2}{2}}\frac{d^d}{dx^d}e^{-\tfrac{x^2}{2}}$ yields
            \begin{align*}
            \int_{-\infty}^{\infty}\! \tfrac{H_{d}(u)}{d}\tfrac{e^{-u^2/2}}{\sqrt{2\pi}}\,du = \tfrac{(-1)^d}{d\sqrt{2\pi}}\int_{-\infty}^{\infty}\! \tfrac{d^d}{dx^d}e^{-\tfrac{x^2}{2}}\,du =  \tfrac{(-1)^d}{d\sqrt{2\pi}} \left[\tfrac{d^{d-1}}{dx^{d-1}}e^{-\tfrac{u^2}{2}}\right]_{-\infty}^\infty = 0\,.
            \end{align*}
            Therefore, equation \eqref{eq:hatL-v1} simplifies to
            \begin{align*}
            \tfrac{(d-1)!}{(2\pi)^{d/2}}\hat\L_{d} &= \int_{u_0}^{u_M}\! H_{d-1}(u)\chi_f(u)\,du + \L_0\tfrac{H_{d}(u_0)}{d} - \L_0\int_{-\infty}^{\infty}\! \tfrac{H_{d}(u)}{d}\tfrac{e^{-u^2/2}}{\sqrt{2\pi}}\,du\\
            &= \int_{u_0}^{u_M}\! H_{d-1}(u)\chi_f(u)\,du + \L_0\tfrac{H_{d}(u_0)}{d}
            \end{align*}
            and thus using the representation \eqref{eq:EC-crit-val-repr} and $H_d'(x) = d\cdot H_{d-1}(x)$ we finally compute:
            \begin{align*}
            \tfrac{d!}{(2\pi)^{d/2}}\hat\L_{d} &= \L_0H_{d}(u_0) + \sum_{m=1}^{M} a_m \big( H_d(u_{m}) - H_d(u_{m-1}) \big)\\
                                                &= \L_0H_{d}(u_0) + \sum_{m=1}^{M} a_m H_d(u_{m}) -  \sum_{m=0}^{M-1} a_{m+1} H_d(u_{m})\\
                                                &= \sum_{m=0}^{M} (a_m-a_{m+1}) H_d(u_{m})\,,
            \end{align*}
            where we defined $a_0=\L_0$ and $a_{M+1}=0$.
            \qed

\subsection{Proof of Theorem \ref{thm:UnbiasedAndVariance}}
Part 1):

 By equation \eqref{eq:L-recover} we have to justify interchanging the integral and the expectation in
    \begin{align*}
        \tfrac{(d-1)!}{(2\pi)^{d/2}}\E\left[ \hat\L_{d} \right] &= \E\left[ \int_{-\infty}^\infty \!H_{d-1}(u)\left( \chi_f(u)-\L_0\Phi^+(u) \right)\,du \right] \\
         &= \int_{-\infty}^\infty \!H_{d-1}(u)\left( \EEC(u)-\L_0\Phi^+(u) \right)\,du = \tfrac{(d-1)!}{(2\pi)^{d/2}} \L_{d}\,.
    \end{align*}
 Therefore, we consider the following splitting of the integral
 \begin{align*}
 \tfrac{(d-1)!}{(2\pi)^{d/2}}\hat\L_{d}&= \int_{u_0}^{u_M}\! H_{d-1}(u)\chi_f(u)\,du 
                        - \L_0\int_{u_0}^{\infty}\! H_{d-1}(u)\Phi^+(u)\,du\\
                        &\quad+ \L_0\int_{-\infty}^{u_0}\! H_{d-1}(u)\left(1-\Phi^+(u)\right)\,du\nonumber\\
                       &= I + \L_0\cdot II + \L_0\cdot III\,,
\end{align*}
where as above $u_0=\min_{s\in S} f(s)$. For each summand we show seperately that we can interchange the expectation and the integral, if we write it using characteristic functions.

We begin with integral I. Here it is sufficient to prove that
\begin{equation}
    \int_{-\infty}^{\infty}\!\E\left[  \mathds{1}_{(u_0,u_M]}(u) \vert H_{d-1}(u)\chi_f(u) \vert  \right]\,du < \infty\,.
\end{equation}
Again we use the stepfunction representation \eqref{eq:EC-crit-val-repr} and note that $\vert a_m \vert \leq M_0 = M+\L_0$ for all $m=1,...,M$, since at each critical point the EC can only change by $\pm 1$. Thus, using $(G4a)$, the triangle inequality, H\"older's inequality and Borel-TIS inequality, we obtain for any $\epsilon>0$
\begin{align*}
     &\int_{-\infty}^{\infty}\!\E\left[  \mathds{1}_{(u_0,u_M]}(u) \vert H_{d-1}(u)\chi_f(u) \vert  \right]\,du\\
        &\quad\quad\quad\quad\leq \int_{-\infty}^{\infty}\!\E\left[  \sum_{m=1}^{M} \vert a_m \vert \mathds{1}_{(u_{m-1},u_{m}]}(u)\right] \vert H_{d-1}(u) \vert \,du \\
        &\quad\quad\quad\quad<    \int_{-\infty}^{\infty}\!\E\left[  M_0 \mathds{1}_{(u_{0},u_{M}]}(u)\right] \vert H_{d-1}(u) \vert \,du \\
        &\quad\quad\quad\quad\leq \int_{-\infty}^{\infty}\!\E\left[  M_0^{1+\epsilon}\right]^{\frac{1}{1+\epsilon}} \E\left[  \mathds{1}_{(u_{0},u_{M}]}(u)\right]^{\frac{1}{1+1/\epsilon}} \vert H_{d-1}(u) \vert \,du \\
        &\quad\quad\quad\quad\leq C'\int_{-\infty}^{\infty}\! P\left[ \min_{s\in S} f(s) < u \leq \max_{s\in S} f(s)\right]^{\frac{1}{1+1/\epsilon}} \vert H_{d-1}(u) \vert \,du \\
        &\quad\quad\quad\quad\leq C'\int_{-\infty}^{\E[u_0]}\! e^{\frac{-(u-\E[u_0])^2}{2+2/\epsilon}} \vert H_{d-1}(u) \vert \,du + C'\int_{\E[u_0]}^{\E[u_M]}\! \vert H_{d-1}(u) \vert\,du + \\
        &\quad\quad\quad\quad~~~~ C'\int_{\E[u_M]}^{\infty}\! e^{\frac{-(u-\E[u_M])^2}{2+2/\epsilon}} \vert H_{d-1}(u) \vert \,du < \infty\,.
\end{align*}
Thus, for I we are allowed to interchange integral and expectation. Note that we used
\begin{align*}
    P\left[ \min_{s\in S}f(s)<u \right] &\leq e^{-\frac{(-u+\E[u_0])^2}{2\sigma^2_T}}\,,~~~ \text{ for } u < \E\left[ \min_{s\in S}f(s)\right] \\
    P\left[ \max_{s\in S}f(s)>u \right] &\leq e^{-\frac{(u-\E[u_M])^2}{2}}\,,~~~ \text{ for } u > \E\left[ \max_{s\in S}f(s)\right]\,,
\end{align*}
which are easily derived from Borel-TIS inequality (e.g., \citet[Thm. 2.1.1]{RFG:2007}). Note that $\sigma^2_T=1$ in our case.

Next consider II and III. Note that similarly as above using Borel-TIS and the exponential bounds for $\Phi^+(u)$ and $1-\Phi^+(u)$, we compute
\begin{align*}
   &\int_{-\infty}^{\infty}\! \E\left[ \mathds{1}_{(u_0,\infty)}(u)\vert H_{d-1}(u)\vert \Phi^+(u) \right]\,du\\
      &\quad\quad\quad\quad\leq \int_{-\infty}^{\infty}\!  \vert H_{d-1}(u)\vert\Phi^+(u)P\left[ \min_{s\in S} f(s) < u \right]\,du \\
      &\quad\quad\quad\quad\leq \int_{-\infty}^{\E[u_0]}\! e^{\frac{-(u-\E[u_0])^2}{2}} \vert H_{d-1}(u) \vert \,du + \int_{\E[u_0]}^{\infty}\!  \vert H_{d-1}(u)\vert\Phi^+(u)\,du<\infty\,.\\
\end{align*}
and
\begin{align*}
   &\int_{-\infty}^{\infty}\! \E\left[ \mathds{1}_{(-\infty, u_0)}(u)\vert H_{d-1}(u)\vert \big(1-\Phi^+(u)\big) \right]\,du\\
      &\quad\quad\quad\quad\leq \int_{-\infty}^{\infty}\!  \vert H_{d-1}(u)\vert\big(1-\Phi^+(u)\big)P\left[ \min_{s\in S} f(s) > u \right]\,du \\
      &\quad\quad\quad\quad\leq \int_{-\infty}^{\E[u_M]}\! \big(1-\Phi^+(u)\big) \vert H_{d-1}(u) \vert \,du \\
      &\quad\quad\quad\quad~ ~ ~+ \int_{\E[u_M]}^{\infty}\!  \vert H_{d-1}(u)\vert P\left[ \max_{s\in S} f(s) > u \right]\,du < \infty\,.
\end{align*}

Part 2):

Note that by part 1) we have
\begin{equation*}
    \sigma_{dd'} = \Cov\left[\hat{\L}_{d}, \hat{\L}_{d'}\right] = \E\left[ \hat{\L}_{d}\hat{\L}_{d'} \right] - \L_{d}\L_{d'}\,.
\end{equation*}
Hence we will only show that the expectation on the r.h.s. is finite. With slight but obvious change in notation to part 1) we can write
\begin{align*}
    \tfrac{(d-1)!^2}{(2\pi)^{d}}\E\left[ \hat{\L}_{d}\hat{\L}_{d'} \right] &= \E\left[ (I_d + \L_0\cdot II_d + \L_0\cdot III_d)(I_{d'} + \L_0\cdot II_{d'} + \L_0\cdot III_{d'}) \right]
\end{align*}
Our strategy is again to bound each summand seperately and even more show that we could interchange expectation and integration, since this implies immediately the finiteness. The arguments are very similar to the unbiasedness proof. Therefore we shorten the computations considerably. Moreover, we use the abbreviation $P(u,u')=\vert H_{d-1}(u) H_{{d'}-1}(u')\vert$.

Case $\E\big[ I_dI_{d'} \big]$:
\begin{align*}
     \iint_{\R^2}\!&P(u,u')\E\left[  \mathds{1}_{(u_{0},u_{M}]}(u)\mathds{1}_{(u_{0},u_{M}]}(u') \vert\chi(u)\chi_f(u') \vert  \right]\,dudu'\\
        &<\iint_{\R^2}\!P(u,u')\E\left[  M_0^2 \mathds{1}_{(u_{0},u_{M}]}(u)\mathds{1}_{(u_{0},u_{M}]}(u')\right] \,dudu' \\
        &\leq\iint_{\R^2}\!P(u,u')\E\left[  M_0^{2+\epsilon} \right]^{\frac{2}{2+\epsilon}} E\left[  \mathds{1}_{(u_{0},u_{M}]}(u)\mathds{1}_{(u_{0},u_{M}]}(u')\right]^{\frac{1}{1+2/\epsilon}} \,dudu' \\
        &\leq\iint_{\R^2}\!P(u,u')\E\left[  M_0^{2+\epsilon} \right]^{\frac{2}{2+\epsilon}} E\left[  \mathds{1}_{(u_{0},u_{M}]}(u)\right]^{\frac{1}{2+4/\epsilon}} \E\left[ \mathds{1}_{(u_{0},u_{M}]}(u')\right]^{\frac{1}{2+4/\epsilon}} \,dudu' \\
        &\leq C\int_{-\infty}^{\infty}\vert H_{d-1}(u)\vert P\left[ \min_{s\in S} f(s) < u \leq \max_{s\in S} f(s)\right]^{\frac{1}{2+4/\epsilon}} du\\
        &  ~ ~~ ~ ~ ~ ~ ~ ~ ~ ~\cdot\int_{-\infty}^{\infty}\!\vert H_{k-1}(u')\vert P\left[ \min_{s\in S} f(s) < u' \leq \max_{s\in S} f(s)\right]^{\frac{1}{2+4/\epsilon}} \,du' <\infty
\end{align*}

Case $\E\big[ II_dII_{d'} \big]$:
\begin{align*}
   \iint_{\R^2}\! &P(u,u') \Phi^+(u)\Phi^+(u') \E\left[ \mathds{1}_{(u_0,\infty)}(u)\mathds{1}_{(u_0,\infty)}(u') \right]\,dudu' \\
      &\leq \iint_{\R^2}\!  P(u,u') \Phi^+(u)\Phi^+(u') P\left[ \min_{s\in S} f(s) < u \right]^{\frac{1}{2}}P\left[ \min_{s\in S} f(s) < u \right]^{\frac{1}{2}}\,dudu' \\
      &\leq \int_{\R}\!  \vert H_{d-1}(u)\vert \Phi^+(u)P\left[ \min_{s\in S} f(s) < u \right]^{\frac{1}{2}}\,du \cdot\int_{\R}\vert H_{k-1}(u')\vert\Phi^+(u') P\left[ \min_{s\in S} f(s) < u \right]^{\frac{1}{2}}\,du' \\
      &< \infty\,.
\end{align*}

Case $\E\big[ III_dIII_{d'} \big]$: Reduces again basically, by the same arguments as in the previous case to the integrals III.

Case $\E\big[ I_dII_{d'} \big]$:
\begin{align*}
    \iint_{\R^2}\!&P(u,u')\Phi^+(u')\E\left[  \mathds{1}_{(u_{0},u_{M}]}(u)\mathds{1}_{[u_{0},\infty)}(u') \vert\chi(u) \vert  \right]\,dudu'\\
    &\leq\iint_{\R^2}\!P(u,u')\Phi^+(u') \E\left[  M_0^{2+\epsilon}\right]^{\frac{2}{2+\epsilon}} \E\left[  \mathds{1}_{(u_{0},u_{M}]}(u)\right]^{\frac{1}{2+4/\epsilon}}\E\left[  \mathds{1}_{[u_{0},\infty)}(u)\right]^{\frac{1}{2+4/\epsilon}} \!dudu'\\
    &\leq C\int_{-\infty}^{\infty}\vert H_{d-1}(u)\vert P\left[ \min_{s\in S} f(s) < u \leq \max_{s\in S} f(s)\right]^{\frac{1}{2+4/\epsilon}} du\\
    &  ~ ~~ ~ ~ ~ ~ ~ ~ ~ ~\cdot\int_{\R}\vert H_{k-1}(u')\vert\Phi^+(u') P\left[ \min_{s\in S} f(s) < u \right]^{\frac{1}{2+4/\epsilon}} \,du' < \infty
\end{align*}

Case $\E\big[ (I_dIII_{d'} \big]$ and $\E\big[ (II_dIII_{d'} \big]$: basically, the same arguments as previous one.

Thus, by Fubini we have $\sigma_{dd'}<\infty$. \qed

\subsection{Proof of Corollary \ref{cor:CovDecay}}
 The computations in the proof of Theorem \ref{thm:LKC-unbiased} 2.) are valid for all $d,k\in\mathbb{N}$ and imply that the identity \eqref{eq:cov-L} is true, i.e. interchanging the integrals is justified. This, implies
 \begin{equation*}
    \frac{(2\pi)^{d/2} (2\pi)^{d'/2}}{(d-1)! (k-1)!} \iint H_{d-1}(u) \, H_{k-1}(v) \, \Cov\left[\chi_f(u), \chi_f(v)\right] \,du\,dv <\infty
 \end{equation*}
 for all $d,k\in \mathbb{N}$. However, this can only be true, if $\Cov\left[\chi_f(u), \chi_f(v)\right]$ decays faster for $u,u'\rightarrow \pm\infty$ than any polynomial in $u,u'$.\qed

\subsection{Proof of Theorem \ref{thm:bHPE}}
  \noindent (i) By the assumptions we can apply Theorem \ref{thm:UnbiasedAndVariance}(i) to the GMF $R^{(N)}_{\mathbf{g}}$, which yields that $\E\big[ R\vphantom{R}^{(N)}_{\mathbf{g}} \big] =\bm{{\L}}\big( \hat{\mathfrak{r}}\vphantom{r}^{(N)} \big) <\infty$ for almost all $R_1,...,R_N$. Thus, since $R^{(N)}_{\mathbf{g}_1},...,^{(N)}_{\mathbf{g}_N}$ are independent, the SLLN implies the claim.
  
  \noindent (ii.) Note that the LKCs as defined in \cite[eq. 12.4.7]{RFG:2007} and as they appear in the GKF are integrals of continuous functions over $S$. The volume elements $\mathcal{H}_j$ do depend only on the Riemannian metric induced by the random field, which is in local coordinates $\partial_1,...,\partial_D$ given by
    $$\hat{g}_{dd'}(s)=\E\left[ \partial_d R^{(N)}_{\mathbf{g}}(s) \partial_{d'}R^{(N)}_{\mathbf{g}}(s)\right] = \partial_{d}\partial_{d'}\hat{\mathfrak{r}}\vphantom{r}^{(N)}(s,s')\vert_{s=s'}.$$
  The latter does converge by \textbf{(R2)} almost surely uniformly to
    $$\partial_{d}\partial_{d'}{\mathfrak{r}}(s,s')\vert_{s=s'} =\E\left[ \partial_d G(s) \partial_{d'}G(s)\right] = {g}_{dd'}(s).$$
  The only other random quantity in the integral is the Riemannian curvature tensor, which by \cite[p.308, first equation]{RFG:2007} is a continuous function depending on
  \begin{align*}
   \E\left[ \partial_d\partial_{d'} R^{(N)}_{\mathbf{g}}(s) \partial_{d''}\partial_{d'''}R^{(N)}_{\mathbf{g}}(s)\right] &= \partial_{d}^{s}\partial_{d'}^{s}\partial_{d''}^{s'}\partial_{d'''}^{s'}\hat{\mathfrak{r}}\vphantom{r}^{(N)}(s,s')\vert_{s=s'}\\
   &\quad\quad\quad\quad\xrightarrow{~a.s.~} \partial_{d}^{s}\partial_{d'}^{s}\partial_{d''}^{s'}\partial_{d'''}^{s'}{\mathfrak{r}}(s,s')\vert_{s=s'}
  \end{align*}
  for all $d,d',d'', d'''\in\{1,...,D\}$, where the supscript in $\partial_{d}^{s}$ indicates to which of the two components the partial derivative is applied to.
  
  Since all the above convergences are uniformly almost surely, we can apply Lebesgue's dominated convergence theorem in order to interchange the limit $N$ tending to infinity and the integrals in order to obtain the claim
  $\bm{{\L}}\big( \hat{\mathfrak{r}}\vphantom{r}^{(N)} \big) \xrightarrow{~a.s.~} \bm{{\L}}\big( \mathfrak{r} \big)$.
\qed

\subsection{Proof of Theorem \ref{thm:ECcurveCLT}}
We only proof part (ii), which is a simple application of Lemma \ref{lemma:fintefCLT}. Part (i) is similarily a consequence of the continuous mapping theorem.

Let us endow $\mathbb{R}^D$ with its standard norm denoted by $\vert\cdot\vert$. We define the space of bounded functions $\ell^\infty(\mathbb{R})$ over $\mathbb{R}$ as the set of all functions satisfying
\begin{equation}\label{eq:normloo}
	\Vert f \Vert_\infty = \sup_{s\in\mathbb{R}} \vert f(s) \vert <\infty\,.
\end{equation}
Note that $\ell^\infty(\mathbb{R})$ is a metric space with metric $d(f,g) = \Vert f-g \Vert_\infty$, see \citet[Chapter 1.5]{Vaart:1996}.
\begin{lemma}\label{lemma:fintefCLT}
	 Let $\mathcal{F}\subset \ell^\infty(\mathbb{R})$ be the finite dimensional subspace spanned by the functions $\phi_1,...,\phi_D$ and assume that the norm on $\mathcal{F}$ is induced by \eqref{eq:normloo}. Let ${\bf{a}}^{(N)}\in\mathbb{R}^D$ be a sequence of mean zero random vectors satisfying $N^{-1/2}{\bf{a}}^{(N)}\xrightarrow{~D~}N(0,\Sigma)$ with finite covariance matrix $\Sigma\in \mathbb{R}^{D\times D}$. Then
	\begin{equation}
		N^{-1/2} \sum_{d=1}^D{a}^{(N)}_d \phi_d \xrightarrow{~D~} \sum_{d=1}^D X_d \phi_d
	\end{equation}
with ${\bf X}=(X_1,...,X_d)\sim N(0,\Sigma)$ weakly in $(\ell^\infty(\mathbb{R}), \Vert\cdot\Vert_\infty)$.
\end{lemma}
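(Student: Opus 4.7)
The plan is to recognize that since $\mathcal{F}$ is finite dimensional, the map sending a coefficient vector to its corresponding element of $\mathcal{F}$ is continuous, so the lemma reduces to a direct application of the continuous mapping theorem for weak convergence.

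First, I would define the linear operator
\begin{equation*}
T: \mathbb{R}^D \to (\ell^\infty(\mathbb{R}), \|\cdot\|_\infty), \qquad T({\bf x}) = \sum_{d=1}^D x_d \phi_d.
\end{equation*}
The operator is well defined because each $\phi_d \in \ell^\infty(\mathbb{R})$, and for any ${\bf x}, {\bf y} \in \mathbb{R}^D$ the triangle inequality yields
\begin{equation*}
\|T({\bf x}) - T({\bf y})\|_\infty \leq \sum_{d=1}^D |x_d - y_d|\,\|\phi_d\|_\infty \leq C\,|{\bf x} - {\bf y}|,
\end{equation*}
with $C = \bigl(\sum_{d=1}^D \|\phi_d\|_\infty^2\bigr)^{1/2}$ finite. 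Hence $T$ is Lipschitz, and in particular Borel measurable and continuous everywhere.

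Next, by hypothesis $N^{-1/2}{\bf a}^{(N)} \xrightarrow{D} {\bf X} \sim N(0,\Sigma)$ in $\mathbb{R}^D$ with its standard topology. Applying the continuous mapping theorem (e.g. van der Vaart and Wellner 1996, Thm. 1.3.6) with the continuous map $T$ gives
\begin{equation*}
T\bigl(N^{-1/2}{\bf a}^{(N)}\bigr) = N^{-1/2}\sum_{d=1}^D a_d^{(N)} \phi_d \;\xrightarrow{D}\; T({\bf X}) = \sum_{d=1}^D X_d \phi_d
\end{equation*}
weakly in $(\ell^\infty(\mathbb{R}), \|\cdot\|_\infty)$, which is the claim.

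There is essentially no serious obstacle here; the only point requiring mild care is that weak convergence in the non-separable space $\ell^\infty(\mathbb{R})$ must be interpreted in the Hoffmann-J{\o}rgensen sense, but since the image of $T$ lies in the finite dimensional (hence separable) subspace $\mathcal{F}$ and the limit $T({\bf X})$ is tight and Borel measurable, the continuous mapping theorem applies without complications. If desired, one can bypass any measurability subtlety altogether by noting that $T$ is a linear homeomorphism from $\mathbb{R}^D$ onto $\mathcal{F}$ (by equivalence of norms on finite-dimensional spaces), so weak convergence on $\mathbb{R}^D$ transfers verbatim to $\mathcal{F}$, and weak convergence in $\mathcal{F}$ implies weak convergence of the same sequence viewed in the larger space $\ell^\infty(\mathbb{R})$.
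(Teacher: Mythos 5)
Your proof is correct and follows essentially the same route as the paper: define the linear map ${\bf x}\mapsto\sum_d x_d\phi_d$, observe it is continuous by finite-dimensionality, and apply the continuous mapping theorem, then transfer weak convergence from the subspace $\mathcal{F}$ to $\ell^\infty(\mathbb{R})$ (the paper cites van der Vaart and Wellner, Theorems 1.3.6 and 1.3.10, for exactly these two steps). Your explicit Lipschitz bound and the remark on Hoffmann-J{\o}rgensen weak convergence are welcome additional care but do not change the argument.
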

\begin{proof}
	Define the linear map $g:(\mathbb{R}^D, \Vert\cdot\Vert) \rightarrow (\mathcal{F}, \Vert\cdot\Vert_\infty)$ with $g(x)=\sum_{d=1}^D x_d\phi_d$. By linearity and the fact that $\mathbb{R}^D$ is finite, we have that $g$ is continuous. Hence the continuous mapping theorem \citep[p.20, 1.3.6 Theorem]{Vaart:1996} implies that
\begin{equation}
	N^{-1/2} \sum_{d=1}^D{a}^{(N)}_d \phi_d = N^{-1/2}g\big({\bf{a}}^{(N)}\big)= g\big(N^{-1/2}{\bf{a}}^{(N)}\big) \xrightarrow{~D~} g(X)  = \sum_{d=1}^D X_d \phi_d\,,
\end{equation}
in $(\mathcal{F}, \Vert\cdot\Vert_\infty)$, where ${\bf X}=(X_1,...,X_d)\sim N(0,\Sigma)$. This shows the claim, because weak convergence in $(\mathcal{F}, \Vert\cdot\Vert_\infty)$ is equivalent to weak convergence in its supspace $(\ell^\infty(\mathbb{R}), \Vert\cdot \Vert_\infty)$ for random variables with support in $(\mathcal{F}, \Vert\cdot\Vert_\infty)$, see \citet[p.24, 1.3.10 Theorem]{Vaart:1996}.
\end{proof}

\subsection{Proof of Theorem \ref{thm:CLT-threshold}}

\begin{proof}
	The result follows from applying Lemma \ref{lemma:inverse-delta-method} below with $\hat{g}(u) = \widehat{\EEC}\vphantom{EEC}^{(N)}\!(u) - \alpha$ and $g(u) = \EEC(u) - \alpha$.
The conditions of the lemma are satisfied by these functions by Theorem \ref{thm:ECcurveCLT}.	
\end{proof}

\begin{lemma}
	\label{lemma:inverse-delta-method}
	Let the function $\hat{g}^{(N)}(u)$ and its first derivative $\hat{g}'^{(N)}(u)$ be uniformly consistent estimators of the function $g(u)$ and its first derivative $g'(u)$, respectively, where both are uniformly continuous over $u\in\R$. Assume there exists an open interval $I=(a, b)$ such that $g$ is strictly monotone on $I$ and there exists a unique solution $u_0\in I$ to the equation $g(u)=0$. Define $\hat{u}^{(N)} = \sup \{u\in I: \hat{g}^{(N)}(u) = 0\}$.
	\begin{enumerate}
		\item[(i)] $\hat{u}$ is a consistent estimator of $u_0$.
		\item[(ii)] Suppose the derivative $g'(u_0) \ne 0$. If $\sqrt{N}[\hat{g}^{(N)}(u) - g(u)]$ converges uniformly in distribution to a Gaussian process $G(u)$, then $\sqrt{N}(\hat{u}^{(N)} - u_0)$ converges in distribution to $G(u_0)/g'(u_0)$.
	\end{enumerate}
\end{lemma}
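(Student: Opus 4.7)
The plan is to prove (i) by a direct intermediate value theorem argument that exploits the strict monotonicity of $g$ near $u_0$, and to prove (ii) by a one-term mean value expansion of $\hat{g}^{(N)}$ about $u_0$, evaluated at $\hat{u}^{(N)}$, followed by Slutsky's theorem. Without loss of generality, assume $g$ is strictly increasing on $I$; the decreasing case is identical up to a sign.

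For (i), fix $\epsilon>0$ small enough that $[u_0-\epsilon,u_0+\epsilon]\subset I$. Strict monotonicity gives $g(u_0-\epsilon)<0<g(u_0+\epsilon)$, so $\delta := \min\{-g(u_0-\epsilon),\,g(u_0+\epsilon)\}>0$ and $|g(u)|\ge\delta$ for every $u\in I\setminus(u_0-\epsilon,u_0+\epsilon)$. By uniform consistency, the event $A_N=\{\sup_{u}|\hat{g}^{(N)}(u)-g(u)|<\delta/2\}$ has probability tending to one. On $A_N$, the continuous function $\hat{g}^{(N)}$ satisfies $\hat{g}^{(N)}(u_0-\epsilon)<-\delta/2<0<\delta/2<\hat{g}^{(N)}(u_0+\epsilon)$, hence has at least one zero in $(u_0-\epsilon,u_0+\epsilon)$ by the intermediate value theorem, while $|\hat{g}^{(N)}(u)|>\delta/2$ on $I\setminus(u_0-\epsilon,u_0+\epsilon)$, so every zero of $\hat{g}^{(N)}$ in $I$ lies inside this interval. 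Therefore $\hat{u}^{(N)}\in(u_0-\epsilon,u_0+\epsilon)$, establishing $\hat{u}^{(N)}\xrightarrow{~P~}u_0$.

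For (ii), the mean value theorem applied to $\hat{g}^{(N)}$ on the interval between $u_0$ and $\hat{u}^{(N)}$ furnishes a random point $\tilde{u}^{(N)}$ between them with $0=\hat{g}^{(N)}(\hat{u}^{(N)})=\hat{g}^{(N)}(u_0)+\hat{g}'^{(N)}(\tilde{u}^{(N)})\bigl(\hat{u}^{(N)}-u_0\bigr)$. Using $g(u_0)=0$ and rearranging,
\begin{equation*}
\sqrt{N}\bigl(\hat{u}^{(N)}-u_0\bigr) \,=\, -\,\frac{\sqrt{N}\bigl[\hat{g}^{(N)}(u_0)-g(u_0)\bigr]}{\hat{g}'^{(N)}(\tilde{u}^{(N)})}.
\end{equation*}
Since point evaluation at the fixed argument $u_0$ is continuous for the sup-norm topology, the continuous mapping theorem gives that the numerator converges in distribution to $-G(u_0)$. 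Combining this with the denominator limit established below via Slutsky's theorem yields $\sqrt{N}(\hat{u}^{(N)}-u_0)\xrightarrow{~D~}-G(u_0)/g'(u_0)$, which has the same law as $G(u_0)/g'(u_0)$ since $G(u_0)$ is a centered Gaussian random variable.

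The main obstacle is the denominator: we have uniform convergence of $\hat{g}'^{(N)}$ to the deterministic continuous function $g'$, but the argument $\tilde{u}^{(N)}$ is itself random. This is resolved by the triangle decomposition
\begin{equation*}
\bigl|\hat{g}'^{(N)}(\tilde{u}^{(N)})-g'(u_0)\bigr| \,\le\, \|\hat{g}'^{(N)}-g'\|_\infty \,+\, \bigl|g'(\tilde{u}^{(N)})-g'(u_0)\bigr|,
\end{equation*}
whose first term vanishes by the uniform consistency assumption on $\hat{g}'^{(N)}$ and whose second term vanishes in probability by the uniform continuity of $g'$ together with $\tilde{u}^{(N)}\xrightarrow{~P~}u_0$ (a consequence of the sandwich $\tilde{u}^{(N)}\in[\min(u_0,\hat{u}^{(N)}),\max(u_0,\hat{u}^{(N)})]$ and part (i)). Since $g'(u_0)\neq 0$, the denominator stays bounded away from zero with probability tending to one, so Slutsky's theorem applies and the argument is complete.
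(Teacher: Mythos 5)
Your proof is correct and follows essentially the same route as the paper's: the same $\delta/2$ uniform-consistency argument localizing all roots of $\hat{g}^{(N)}$ near $u_0$ for part (i), and the same mean-value expansion with the triangle decomposition $|\hat{g}'^{(N)}(\tilde{u}^{(N)})-g'(u_0)|\le\|\hat{g}'^{(N)}-g'\|_\infty+|g'(\tilde{u}^{(N)})-g'(u_0)|$ plus Slutsky for part (ii). Your explicit remark that $-G(u_0)$ and $G(u_0)$ share the same law (justifying the sign in the stated limit) is a small clarification the paper leaves implicit.
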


\begin{proof}
\hfill
	\begin{enumerate}
		\item[(i)] Without loss of generality, assume that $g$ is strictly decreasing on $I$. For any $\varepsilon>0$, $g(u_0-\varepsilon)>0>g(u_0+\varepsilon)$ since $g(u_0)=0$. Since $\hat{g}^{(N)}(u)$ is a consistent estimator of $g(u)$, we have
		\begin{equation*}\label{g-hat-root}
		\P\left[ \hat{g}^{(N)}(u_0-\varepsilon)>0>\hat{g}^{(N)}(u_0+\varepsilon) \right] \to 1,
		\end{equation*}
		implying that with probability tending to 1, there is a root of $\hat{g}$ in $I_{0,\varepsilon} = (u_0-\varepsilon, u_0+\varepsilon)$. On the other hand, by the monotonicity of $g$, there exists $\delta>0$ such that 
		\[
		\inf_{u\in I\setminus I_{0,\varepsilon}} |g(u)| > \delta.
		\]
		Further, by the uniform consistency of $\hat{g}$,
		\[
		\P\left[ \sup_{u\in I} |\hat{g}^{(N)}(u) - g(u)|< \delta/2 \right] \to 1.
		\]
		Therefore, since
		\[
		\inf_{u\in I\setminus I_{0,\varepsilon}} |\hat{g}^{(N)}(u)|
		\ge \inf_{u\in I\setminus I_{0,\varepsilon}} |g(u)| -
		\sup_{u\in I\setminus I_{0,\varepsilon}} |\hat{g}^{(N)}(u) - g(u)|,
		\]
		we have that
		\begin{equation*}\label{g-hat-no-root}
		\begin{aligned}
		&\P\left[ \inf_{u\in I\setminus I_{0,\varepsilon}} |\hat{g}^{(N)}(u)| > \delta/2 \right] \\
		&\quad\quad\quad\quad\ge \P\left[ \inf_{u\in I\setminus I_{0,\varepsilon}} |g(u)| -
		\sup_{u\in I\setminus I_{0,\varepsilon}} |\hat{g}^{(N)}(u) - g(u)| > \delta/2 \right] \\
		&\quad\quad\quad\quad= \P\left[ \sup_{u\in I\setminus I_{0,\varepsilon}} |\hat{g}^{(N)}(u) - g(u)| < \inf_{u\in I\setminus I_{0,\varepsilon}} |g(u)| - \delta/2 \right]
		\to 1.
		\end{aligned}
		\end{equation*}
		This implies that with probability tending to 1, there is no root of $\hat{g}^{(N)}$ outside $I_{0,\varepsilon}$. From the definition of $\hat{u}^{(N)}$, we obtain that $\hat{u}^{(N)}$ is the only root of $\hat{g}^{(N)}$ in $I$ with probability tending to 1. Thus
		\[
		\P[|\hat{u}^{(N)}-u_0|<\varepsilon] = \P[\hat{u}^{(N)} \in I_{0,\varepsilon}] \to 1,
		\]
		yielding that $\hat{u}$ is a consistent estimator of $u_0$.
		
		\item[(ii)] By a Taylor expansion of $\hat{g}^{(N)}(u)$ around $u_0$,
		\[
		0 = \hat{g}^{(N)}(\hat{u}) = \hat{g}^{(N)}(u_0) + (\hat{u}^{(N)} - u_0) \hat{g}'^{(N)}(u^*),
		\]
		where $u^*$ is between $u_0$ and $\hat{u}$, i.e. $|u^* - u_0| \le |\hat{u}^{(N)} - u_0|$. Rearranging, and since $g(u_0) = 0$,
		\begin{equation}\label{eq:uhat-u0}
		\sqrt{N}(\hat{u}^{(N)} - u_0)
		= - \frac{\sqrt{N}[\hat{g}^{(N)}(u_0) - g(u_0)]}{\hat{g}'^{(N)}(u^*)}.
		\end{equation}
		The numerator converges to $G(u)$ in distribution by assumption. To see that the denominator converges to $g'(u_0)$ in probability, 
		\begin{equation}\label{eq:ghat-ustar}
		\begin{split}
		\hat{g}'^{(N)}(u^*) &= g'(u_0) + [\hat{g}'^{(N)}(u^*) - g'(u_0)]\\
		&=g'(u_0) + [\hat{g}'^{(N)}(u^*) -g'(u^*)] +[g'(u^*)- g'(u_0)] \to g'(u_0),
		\end{split}
		\end{equation}
		since $\hat{g}'^{(N)}(u^*) -g'(u^*)$ and $g'(u^*)- g'(u_0)$ converge to 0 in probability by the uniform consistency of $\hat{g}'^{(N)}$ and by part (i) of the lemma, respectively. The result follows immediately from \eqref{eq:uhat-u0} and \eqref{eq:ghat-ustar}.
	\end{enumerate}
\end{proof}

\end{document}